\newtheorem{theorem}{Theorem}[section]
\newtheorem{main}{Main Theorem}
\newtheorem{lemma}[theorem]{Lemma}
\newtheorem{definition}[theorem]{Definition}
\newtheorem{remark}{Remark}
\newenvironment{proof}{\paragraph{Proof:}}{\endpf\\}
\def\endpf{\hfill$\Box$\medskip}
  \thanks[BIOCORE]{BIOCORE, INRIA Sophia
   Antipolis, BP 93, 06902 Sophia Antipolis Cedex, France. {\tt\small
           \{frederic.grognard,\,olivier.bernard\}@sophia.inria.fr} }%
\thanks{Laboratoire MIA, P\^ole Sciences et Technologie, Universit\'e de La Rochelle, Avenue Michel Cr\'epeau, 17042 La Rochelle cedex 1}\thanksref{BIOCORE}
\newtheorem{hypothesis}{Hypothesis}
\newcommand{\limt}{\lim_{t \rightarrow +\infty}}
\begin{document}

\makeRR

\section{Introduction}

\subsection{Growth of phytoplankton}

Phytoplankton is composed of microscopic plants at the basis of the aquatic
trophic chains. Phytoplankton means a broad variety of species (more than
200.000) using solar light to grow through photosynthesis. 
Phytoplankton plays a crucial role in nature since it is
the point from which energy and carbon enter in the food web. But it may also
be used in the future for food or biofuel production, since several
phytoplankton species turn out to have very interesting properties in terms of
protein \cite{Pulz2004,Spolaore2006} or lipid \cite{Chisti2007}
content.  
In addition to light, phytoplankton requires nutrients for its growth. 
The "paradox" of phytoplantkon species coexistence was introduced by
Hutchinson \cite{Hutchinson1961}: "The problem that is presented by the
phytoplankton is essentially how it is possible for a number of species to
coexist in a relatively isotropic or unstructered environment all competing
for the same sort of materials". In this paper we consider this question from
a theoretical viewpoint "what are the mechanisms leading to competitive
exclusion or coexistence, and to what competition outcome do they lead?". But
so far most of the competitions studies have assumed that only phytoplankton
species were engaged in the competition. However, it is clear that such
species also have to  compete with the bacteria for nutrients.  

In this paper, we study the competition between phytoplankton and
bacteria. Phytoplankton can be accurately represented by a Droop model
\cite{Dro1968, SciandraRamani,VBJM06} which accounts for their ability to
  store nutrients and to uncouple uptake and growth. Bacteria are represented
  by simpler models. They can be of two different types, described either by a
  Monod type model if they live in suspension or by a contois model if they
  are attached to a support. 

In this paper we first recall the main results available for competition of
microbial species of the same class. Then we consider the problem of 3 class
competition. After some mathematical preliminaries we state and demonstrate
our main Theorem. A discussion concludes our paper and highlights the
ecological consequences of our result.  

\subsection{The Competitive Exclusion Principle (CEP) }

\begin{center}
	"Complete competitors cannot coexist"
\end{center}
This is the formulation chosen by Hardin \cite{Har1960} to describe the
Competitive Exclusion Principle (CEP). According to him, this ambiguous
wording "is least likely to hide the fact that we still do not comprehend the
exact limits of the principle". But still, a more precise formulation is
given: if several non-interbreeding populations "do the same thing" (they
occupy the same ecological niche in Elton's sense \cite{Elt1927}) and if they
occupy the same geographic territory, then ultimately the most competitive
species will completely displace the others, which will become extinct.  

Darwin was already expressing this principle when he spoke about natural
selection (\cite{Dar1859} p.71 and 102). Scriven described and analyzed his
work in these words: "Darwin's success lay in his empirical, case by case,
demonstration that recognizable fitness was very often associated with
survival. [...] Its great commitment and its profound illumination are to be
found in its application to the lengthening past, not the distant future: in
the tasks of explanation, not in those of prediction" \cite{Scr1959}.  

Since the work of Darwin, men have tried to apprehend the limits of the
principle in different context and by different means. In the next sections we
present how mathematical models have shown their appropriateness for
predicting the outcome of competition, in the case of chemostat-controlled
microcosms.

\subsection{The chemostat, a tool for studying the CEP }
\label{chem_for_CEP}

"Microbial systems are good models for understanding ecological processes at
all scales of biological organization, from genes to ecosystems"
\cite{JesBoh2005}.  
The chemostat is a device which enables to grow microorganisms under highly
controlled conditions. It consists of an open reactor crossed by a flow of
water, where nourishing nutrients are provided by the input flow, whereas both
nutrients and microorganisms are evacuated by the output flow. To keep a
constant volume in the vessel, these two flows are kept equal.  
In this paper we consider that the following conditions are imposed in the
chemostat: the medium is well mixed (homogeneous); only one substrate is
limiting for all the species, whose only (indirect) interaction is the
substrate uptake; the environmental conditions (temperature, pH, light, ...)
are kept constant, and so are the dilution rate $D$, corresponding to the
input/output flow of water, and the input substrate concentration
$s_{in}$. Figure \ref{fig:chem} represents such a chemostat. 

\begin{figure}[htp]
\begin{center}
\includegraphics[width=2in]{./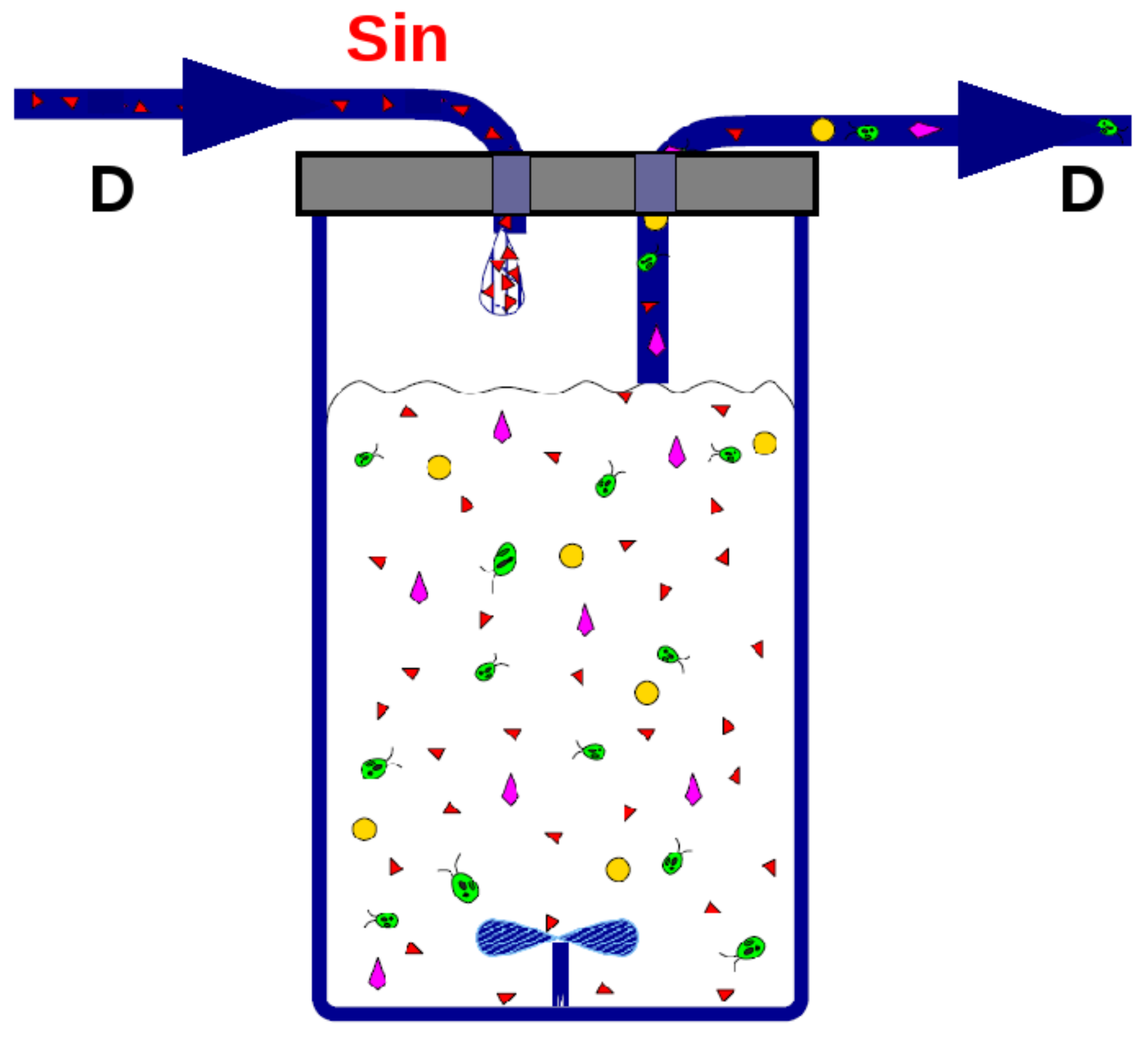}\\
\caption{A chemostat, which enables to grow microorganisms under highly
  controlled conditions. The input/output flow of water is $D$, and the input
  substrate concentration is $s_{in}$} 
\label{fig:chem}
\end{center}
\end{figure}

The chemostat has been used to study the CEP since the beginning of the
$XX^{th}$ century \cite{Gau1934}, and its experimental use has often been
coupled with mathematical models \cite{SmiWal1995}.

\subsection{Bacterial and phytoplanktonic models, and previous theoretical results on single class competition}

\subsubsection{Free bacteria growth (generalized Monod model)}

To predict the growth of bacteria  in suspension within a chemostat, Monod
developed a model \cite{Mon1942}, where the growth rates of the biomasses
$x_i$ ($i\,\in\{1,\cdots,N_x\}$ for a competition between $N_x$ species)
depend on the extracellular substrate concentration $s$.  
In the classical Monod model the growth rates $\alpha_i(s)$ are
Michaelis-Menten functions 
\[
\alpha_i(s) = \frac{s}{s+K^s_i}\alpha^m_i
\]
where $\alpha^m_i$ are the maximum growth rates in substrate replete
conditions, and $K^s_i$ are the half saturation constants. 
In this paper we consider a generalized Monod model to represent growth of
free bacteria, by using the wider class of functions verifying Hypothesis
\ref{hyp_func_x}.      
\begin{hypothesis}\label{hyp_func_x} M-model:\\
$\alpha_i(s)$ are $\mathcal{C}^1$, increasing and bounded functions such that
  $\alpha_i(0)=0$. 
\end{hypothesis}
We note $\alpha^m_i$ the supremum of the growth rate:
\[
\sup_{s\geq 0}\alpha_i(s)=\alpha^m_i>0
\]

The free bacteria dynamics write 
\begin{equation}
\label{SM}
\begin{array}{l}
\dot x_i = (\alpha_i(s) - D) x_i \\
\textrm{with } s,x_i \in\,{\mathbb R}^+ \textrm{ for } i\,\in\{1,\cdots,N_x\} \textrm{ and } D\,\in {\mathbb R}^+_*.
\end{array}
\end{equation}
In this model the substrate uptake is proportional to the biomass growth for
each bacterial species, so that the total substrate uptake per time unit will
be $\sum_{i=1}^{N_x} \alpha_i(s)\frac{x_i}{a_i}$. 

\subsubsection{Phytoplankton model (generalized Droop model)}

Phytoplankton is able to uncouple
substrate uptake of nutrients from the growth associated to photosynthesis
\cite{SciandraRamani}. This capacity to store nutrients can provide a
competitive advantage for the cells  that can develop in situations where
substrate and light (necessary for phytoplankton growth) are rarely available
concomitantly. This behaviour results in varying  intracellular nutrient 
quota: it is the proportion of assimilated substrate per unit of biomass
$z_k$; it can be expressed for instance in mg[substrate]/mg[biomass]. Droop
\cite{Dro1968} developed a model where these internal quotas are represented
by new 
dynamic variables $q_k$ (denoted "cell quota"). 
The substrate uptake rates $\rho_k(s)$ are assumed to depend on the
extracellular substrate while the biomass growth rates $\gamma_k(q_k)$ depend
on the corresponding cell quota.  

In the classical Droop model the functions have specified forms. Uptake rates
are Michaelis-Menten functions (\ref{rho}) of the substrate concentration: 
\begin{equation} \label{rho}
\rho_k(s) = \frac{s}{s+K^s_k} \rho^m_k 
\end{equation}
and the growth rates are Droop functions (\ref{mu}) of the cell quotas:
\begin{equation} \label{mu}
\gamma_k(q_k) =
\left\{
\begin{array}{ll}
\left( 1-\frac{Q^0_k}{q_k} \right) \bar \gamma_k \quad &\textrm{if } q_k \geq
Q^0_k \\ 
0 \quad &\textrm{if } q_k < Q^0_k
\end{array}
\right.
\end{equation}
with $\rho^m_k$ and $\bar \gamma_k$ the maximal uptake and growth rates; $K^s_k$ represent the half saturation constants, and $Q^0_k$ the minimal cell quota. 
In this paper we consider the wider class of Q-models (Quota models) verifying
Hypothesis \ref{hyp_func_z}, so that it can encompass, among others, the
classical Droop formulation \cite{Dro1968} as well as the Caperon-Meyer model
\cite{CapMey72}.  
\begin{hypothesis}\label{hyp_func_z} Q-model:
\begin{itemize}
\item $\rho_k(s)$ are $\mathcal{C}^1$, increasing and bounded functions such
  that 
  $\rho_k(0)=0$
\item $\gamma_k(q_k)$ are $\mathcal{C}^1$, increasing and bounded functions 
  for $q_k > Q^0_k>0$. When $q_k \leq Q^0_k $, $\gamma_k(q_k)=0$.
\end{itemize}
\end{hypothesis}
It directly ensues from Hypothesis \ref{hyp_func_z} that $f_k(q_k)=\gamma_k(q_k)q_k$ are increasing functions (for $q_k > Q^0_k$) which
are onto ${\mathbb R}^+_\star$, so that the inverse functions $f_k^{-1}$ are defined on ${\mathbb R}^+_\star$.

We denote $\rho^m_k$ and $\bar\gamma_k$ the supremal uptake and growth rates:
\[
\begin{array}{l}
\sup_{s\geq 0}\rho_k(s)=\rho^m_k>0\\
\sup_{q_k\geq Q^0_k}\gamma_k(q_k)=\bar\gamma_k>0
\end{array}
\]

The phytoplankton dynamics write 
\begin{equation}
\label{QM}
\begin{array}{l}
	\dot q_k = \rho_k(s) - f_k(q_k) \\
	\dot z_k = (\gamma_k(q_k) - D) z_k \\ 
\textrm{with } s,q_k,z_k \in\,{\mathbb R}^+ \textrm{ for } k\,\in\{1,\cdots,N_z\} \textrm{ and } D\,\in {\mathbb R}^+_*.

\end{array}
\end{equation}
The substrate uptake per time unit is $\sum_{k=1}^{N_z} \rho_k(s) z_k$

This model has been experimentally shown to be better suited
for phytoplankton dynamic modelling than the Monod model (\cite{VBJM06}) that
implicitly supposes that the intracellular quota is simply proportional to the
substrate concentration in the medium and which must be definitely limited to
bacterial modelling. The stability of the Q-model  
has been extensively studied in the mono-specific case (\cite{LangOyar92,OyaLan94,BerGou95}).

\subsection{Previous demonstrations of the CEP for M- and Q-models}

The advantage of Monod and Droop models is that their relative simplicity allows a mathematical analysis. 
The analyses of the M-model with $N_x$ bacterial competing species
\cite{ArmMcG1980}, and of the Droop model with 2 phytoplankton species
\cite{SmiWal1995} and then recently with $N_z$ phytoplankton  species
\cite{HsuHsu2008} led to a confirmation of the CEP in the chemostat, and to a
prediction on "who wins the competition", or "what criterion should a species
optimize to be a good competitor". In both cases, we have 
\begin{theorem} \label{th:CEP_Monod}
If environmental conditions are kept constant and the competition is not
controlled ($D$ and $s_{in}$ remain constant) in a chemostat, then the species
with lowest "substrate subsistence concentration" $s_i^{x\star}$ (or
$s_k^{z\star}$), such that its corresponding equilibrium growth rate is equal
to the dilution rate $D$, is the most competitive and displaces all the
others.  
\end{theorem}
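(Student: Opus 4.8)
The plan is to treat the two model classes in parallel, since they share the same mass-balance structure, proving the result first for the M-model and then indicating the modifications needed for the Q-model. I take the substrate to obey $\dot s = D(s_{in}-s) - \sum_i \frac{\alpha_i(s)}{a_i}x_i$ for free bacteria (resp.\ $\dot s = D(s_{in}-s) - \sum_k \rho_k(s)z_k$ for phytoplankton), consistent with the uptake terms stated above, and I relabel the species so that $s_1^{x\star} < s_2^{x\star} \le \cdots$ (resp.\ $s_1^{z\star} < \cdots$), the index $1$ denoting the presumed winner. The first step is to exhibit a conserved total-substrate quantity: for the M-model set $M = s + \sum_i x_i/a_i$, and for the Q-model set $M = s + \sum_k q_k z_k$ (using $f_k(q_k)=\gamma_k(q_k)q_k$). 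A direct computation gives in both cases $\dot M = D(s_{in}-M)$, so that $M(t)\to s_{in}$ exponentially. This confines every trajectory to a compact attracting set and shows that the $\omega$-limit set of any orbit lies in the invariant hyperplane $\{M=s_{in}\}$.

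Because $M\to s_{in}$, the full system is asymptotically autonomous, its limiting system being obtained by eliminating $s$ through $s = s_{in} - \sum_i x_i/a_i$ (resp.\ $s = s_{in} - \sum_k q_k z_k$). I would invoke the theory of asymptotically autonomous systems (Thieme, Markus) to transfer the asymptotic behaviour of the limiting system back to the full one, after checking the required boundedness of orbits and that no problematic chain-recurrent structure on the boundary obstructs the transfer.

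On the reduced system I would build a Lyapunov function of Hsu type. For the M-model a natural candidate is
\[
V = \frac{1}{a_1}\int_{s_1^{x\star}}^{s}\frac{\alpha_1(\xi)-D}{\alpha_1(\xi)}\,d\xi \;+\; \frac{1}{a_1}\Bigl(x_1 - x_1^{\star}\ln x_1\Bigr) \;+\; \sum_{j\ge 2} c_j\, x_j,
\]
with positive weights $c_j$ chosen so that $\dot V\le 0$; here monotonicity of the $\alpha_i$ together with the ordering $\alpha_j(s_1^{x\star}) < \alpha_j(s_j^{x\star}) = D$ for $j\ge 2$ is what makes the off-diagonal terms dissipative. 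LaSalle's invariance principle then forces the orbit onto the largest invariant subset of $\{\dot V=0\}$, which is precisely the equilibrium $s=s_1^{x\star}$, $x_j=0$ for $j\ge 2$, $x_1 = a_1(s_{in}-s_1^{x\star})$. For the Q-model the same three steps apply, but the reduced system keeps the quota variables $q_k$; the subsistence concentration is characterised by $q_1^{\star}=\gamma_1^{-1}(D)$ and $\rho_1(s_1^{z\star})=f_1(q_1^{\star})=Dq_1^{\star}$, and I would augment $V$ with quota-correction terms of the form $\int_{q_k^{\star}}^{q_k}(\cdots)$ to control the coupled $(q_k,z_k)$ dynamics, closing the argument as in \cite{HsuHsu2008}.

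The main obstacle will be the Lyapunov construction itself: choosing the weights $c_j$ and, for the Q-model, the quota corrections so that $\dot V\le 0$ holds globally, and then verifying that the single-species equilibrium is the only invariant subset of $\{\dot V=0\}$. The Q-model is the harder case because the extra quota dynamics raise the dimension and make both the Lyapunov design and the characterisation of the limit set substantially more delicate; moreover the asymptotically-autonomous reduction must be handled with care to exclude convergence to boundary equilibria other than the winner's.
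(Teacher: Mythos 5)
Your plan -- mass-balance conservation, reduction to an asymptotically autonomous limiting system, then a Hsu-type Lyapunov function plus LaSalle -- is the classical route of the literature the paper cites for this theorem, but it is genuinely different from the paper's own demonstration. The paper obtains Theorem \ref{th:CEP_Monod} as the M-only/Q-only special case of its Main Theorem, and the core of that proof is deliberately Lyapunov-free: after the same reduction to the mass-balance surface $M=s_{in}$, it maps each quota onto the substrate axis through $S^z_k=Q_k^{-1}$ (Lemma \ref{conv_s-qi}, which makes convergence of $s$ and of the $q_k$ equivalent), introduces the non-decreasing lower bound $L(t)=\min\bigl(\min_k S^z_k(q_k(t)),\,s^\star,\,s(t)\bigr)$, proves $s\to s^\star$ by two contradiction arguments (Lemma \ref{s_conv_s1}, Appendices \ref{app_4a} and \ref{app_4b}), deduces the washout of every non-compliant species, and only then invokes the same asymptotically-autonomous machinery you invoke (Theorem F.1 of \cite{SmiWal1995}, together with hyperbolicity of the equilibria and a no-cycle check) to lift the convergence from the surface back to the full system. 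The two proofs therefore share their first and last steps; they differ exactly where the analytical work is concentrated.

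There is, however, a concrete gap in your proposal relative to the statement as made \emph{here}. The theorem is asserted for the paper's generalized M-model (Hypothesis \ref{hyp_func_x}: arbitrary $\mathcal{C}^1$, increasing, bounded $\alpha_i$ with $\alpha_i(0)=0$) and generalized Q-model, not only for Michaelis--Menten and Droop kinetics. For your candidate $V$, making the cross terms dissipative forces, for each losing species $j\ge 2$, a constant $c_j$ satisfying
\[
\sup_{0<s<s_1^{x\star}} \frac{\bigl(\alpha_1(s)-D\bigr)\,\alpha_j(s)}{\alpha_1(s)\,\bigl(\alpha_j(s)-D\bigr)}
\;\le\; c_j \;\le\;
\inf_{s>s_j^{x\star}} \frac{\bigl(\alpha_1(s)-D\bigr)\,\alpha_j(s)}{\alpha_1(s)\,\bigl(\alpha_j(s)-D\bigr)}.
\]
Such a $c_j$ exists when the growth curves can cross at most once -- which is automatic for Michaelis--Menten rates, and is Hsu's classical case -- but it need not exist for general increasing bounded rates: if $\alpha_j>\alpha_1$ both near $s=0$ and above the second crossing (below $s_{in}$), while $\alpha_1>\alpha_j$ on a middle region containing $[s_1^{x\star},s_j^{x\star}]$ (two crossings, perfectly compatible with monotonicity and with $s_1^{x\star}<s_j^{x\star}$), then the left-hand bound exceeds $1$ while the right-hand bound is below $1$, and no constant weight works. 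This is precisely why the general monotone case (\cite{ArmMcG1980}, and this paper) is handled without constant-weight Lyapunov functions; your fallback to \cite{HsuHsu2008} for the quota model inherits the analogous restriction to the specific kinetics treated there. So your argument, once the acknowledged Lyapunov work is done, would prove the theorem for the classical kinetics, but not in the generality stated; closing that gap requires either a substantially more elaborate Lyapunov construction or an argument of the paper's type, whose substrate-axis/lower-bound mechanism needs no algebraic identity and, moreover, extends to the mixed three-class competition that is the paper's real objective.
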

A striking point about this result is that it permits to make predictions on
the result of a competition, only by {\it a priori} knowledge of the species
substrate subsistence concentrations $s_i^{x\star}$ (or $s_k^{z\star}$). This
latter can be determined in monospecific-culture chemostat, so that the
competition outcome can be determined before competition really occurs.  
Several experimental validations where carried out with phytoplankton
\cite{TilSte1984} and bacteria \cite{HanHub1980}. This theoretical behaviour
was also confirmed in a lake \cite{Til1977}, where the species with lowest
phosphate or silicate subsistence concentrations won the competition for
phosphate or silicate limitations.

\subsubsection{Attached bacteria model (generalized Contois model)}

In case where bacteria are not free in the medium but there is a spatial
heterogeneity ({\it e.g. they grow attached on a support, such as flocs in the
  culture medium), a ratio-dependent model is more adapted to describe
  bacterial growth.  Contois model \cite{Contois1959}
  represents such dynamics by using more complex growth functions where the
  growth rates depends on the ratio of the substrate concentration over
  biomass concentration $y_j$ ($j\,\in\{1,\cdots,N_y\}$): 
\[
\beta_j(s,y_j)= \frac{s/ y_j }{K^s_j + s/ y_j } \beta^m_j
\]
In this paper we consider the wider class of "C-model" (Contois model), wich
is more general than a ratio dependent model. It verifies the following
hypotheses: 
\begin{hypothesis}\label{hyp_func_y} C-model :\\
$\beta_j(s,y_j)$ are $\mathcal{C}^1$ functions on ${\mathbb R}^+\times
{\mathbb R}^+\setminus \{(0,0)\}$, increasing and bounded functions of $s$
(for $y_j>0$), and decreasing functions of $y_j$ (for $s_j>0$) such that 
  $\forall y_j \in {\mathbb R}^+_*, \quad \beta_j(0,y_j)=0$ and $\forall s \in {\mathbb R}^+, \displaystyle\lim_{y_j \rightarrow +\infty} \beta_j(s,y_j)=0$
\end{hypothesis}
We also need to add the following technical hyposthesis, which is verified by the classical Contois function:
\begin{hypothesis} \label{hyp:hypo_technique}
\[
\frac{\partial}{\partial y_j} \left(\beta_j(s,y_j)y_j\right)>0
\]
\end{hypothesis}
We notice that the Contois growth function is undefined in $(0,0)$, and that
Hypothesis \ref{hyp_func_y} has been built so that this property can (but does
not have to) be retained by the generalized $\beta$ function. All other
properties imposed by Hypotheses \ref{hyp_func_y} and \ref{hyp:hypo_technique}
are satisfied by the original Contois growth-rate. 

We denote $\beta^m_j(y_j)$ the supremal growth rates for biomass concentration
$y_j$: 
\[
\sup_{s\geq 0}\beta_j(s,y_j)=\beta^m_j(y_j)
\]
so that the C-species dynamics write 
\begin{equation}
\label{SB}
\begin{array}{l}
\dot y_j = (\beta_j(s,y_j) - D) y_j \\
\textrm{with } s,y_j \in\,{\mathbb R}^+ \textrm{ for } j\,\in\{1,\cdots,N_y\} \textrm{ and } D\,\in {\mathbb R}^+_*.
\end{array}
\end{equation}
In this model, like in the M-model, the substrate/biomass intracellular quotas $b_j$ are supposed to be constant for each species, so that the substrate uptake rates are proportional to the growth rates with a factor $1/b_j$.

\subsubsection{Coexistence result for competition between C-species}

Competition between several C-species was studied \cite{GroMazRap2005} and led
to a coexistence at equilibrium with the substrate at a level $s^{y\star}$
depending on the input substrate concentration $s_{in}$ and the dilution rate
$D$. The species share the available substrate.  
To be more precise we must define the "$s_0$-compliance" concept:
$s_0$-compliant species are the species able to have a growth rate equal to
the dilution rate $D$ with a substrate concentration $s_0$. The results of
\cite{GroMazRap2005} show that all the "$s^{y\star}$-compliant" species
coexist in the reactor at equilibirum, and all the others are washed out, as
they cannot grow fast enough with substrate concentration $s^{y\star}$.  

\begin{definition}\label{def:compliance}
A species $x_i, y_j$ or $z_k$ is $s_0$-compliant if it is able to reach a growth rate equal to the dilution rate $D$ with a substrate concentration $s_0$. 
\end{definition}

\subsubsection{Competition and coexistence - towards a new paradigm}

Following these results an interrogation arises: 
\begin{center}
	$\ll$ What would be the result of a competition between "competitive"
        free bacterial and microalgal species, and "coexistive"  attached
        bacterial species? Competitive exclusion? Coexistence? $\gg$ 
\end{center}
The aim of this paper is to provide an answer to this question, and to give
insight into the mechanisms forcing the outcome of such a competition. This
answer leads to a broader view and understanding of competitive exclusion and
coexistence mechanisms, following the words of Hardin \cite{Har1960}: "To
assert the truth of the competitive exclusion principle is not to say that
nature is and always must be, everywhere,"red in tooth and claw." Rather, it
is to point out that {\it every} instance of apparent coexistence must be
accounted for. Out of the study of all such instances will come a fuller
knowledge of the many prosthetic devices of coexistence, each with its own
costs and its own benefits."

\subsection{A generalized model for competition between several phytoplankton
  and bacteria species growing according to different kinetic models} 
\label{sect_pres-droop}

The generalized model for competition between all bacteria and phytoplankton
species is an aggregation of these models, which alltogether give the
following substrate dynamics, subject to substrate input, output, and uptake
rates: 
\begin{equation}
\label{s_model}
\dot s = D (s_{in}-s) - \sum_{i=1}^{N_x} \alpha_i(s) \frac{x_i}{a_i} -
\sum_{j=1}^{N_y} \beta_j(s,y_j) \frac{y_j}{b_j} - \sum_{k=1}^{N_z} \rho_k(s)
z_k 
\end{equation}
The parameters related to the nutrient flow are the dilution rate $D>0$ and
the input substrate concentration $s_{in}>0$, which are both assumed to be
constant.  

To simplify notations we can remark that this system can be normalized with
$a_i=b_j=1$, when considering the change of variables $\tilde x_i =
\frac{x_i}{a_i}$ and $\tilde y_j = \frac{y_j}{b_j}$ (note that all the
hypotheses are still satisfied). 
We obtain system (\ref{model_norm}) where variables $x_i$ and $y_j$ are now
expressed in substrate units.  
\begin{equation}
\label{model_norm}
\begin{array}{l}
\left\{
\begin{split}
	\dot s &= D (s_{in}-s) - \sum_{i=1}^{N_x} \alpha_i(s) x_i - \sum_{j=1}^{N_y} \beta_j(s,y_j) y_j - \sum_{k=1}^{N_z} \rho_k(s) z_k \\
	\dot x_i &= (\alpha_i(s) - D) x_i \\
	\dot y_j &= (\beta_j(s,y_j) - D) y_j \\
	\dot z_k &= (\gamma_k(q_k) - D) z_k \\
	\dot q_k &= \rho_k(s) - f_k(q_k) 
\end{split}
\right. 
\\
\textrm{with } \quad f_k(q_k) = \gamma_k(q_k) q_k \\
\textrm{and } s,q_k \in \,{\mathbb R}^+, \quad s_{in},D\,\in {\mathbb R}^+_*.
\textrm{and } x_i(0),y_j(0),z_k(0) \in\,{\mathbb R}^+_* \textrm{ for } i \in \{1,\cdots,N_x\}, \\
j \in \{1,\cdots,N_y\},k \in \{1,\cdots,N_z\} \\

\end{array}
\end{equation}
Note that the results obtained in this paper apply also on the simple M- only,
Q-only, and C-only competition models, or on a model with two of these three
kind of species. 


\subsection{Other coexistence mechanisms, and competition control}

This introduction wouldn't be complete without a short review of what has been done concerning other coexistive models, or the control of competition.

Following the question arised by Hutchinson \cite{Hutchinson1961} concerning
the "paradox of the phytoplankton", a large amount of work has been done to
explore the mechanisms that enable 
coexistence, mainly for models derived from the Monod model. It has been shown to occur in multi-resource models \cite{LeoTum75,HsuCheHub81}, in
case of non instantaneous growth \cite{FSW89}, in some turbidity operating conditions \cite{DelSmi03}, a
crowding effect \cite{DelAngSon03}, or variable yield \cite{APW03} (not in the
Droop sense). \cite{Wil90} and \cite{FreSte1981} also presented several
mechanisms which can mitigate the competition between microorganisms and
promote coexistence.  

In other papers (\cite{GouRob2005}, \cite{LeeLiSmi2003} and
\cite{RaoRox1990}), controls were proposed to "struggle against the struggle
for existence" (that is, to enable the coexistence of complete
competitors). These controls indicate how to vary the environmental conditions
in order to prevent the CEP from holding : some time varying or
state-depending environmental conditions can enable
coexistence. \cite{MasGroBer08} propose a theoretical way of driving
competition, that is, of choosing environmental conditions for which the
competitiveness criterion changes.

\section{Mathematical preliminaries}


\subsection{The variables are all bounded}
\label{sec:bounds}

Throughout this paper we study the evolution of one solution of system
(\ref{model_norm}) with initial condition $(s(0), x_1(0), \hdots,
x_{N_x}(0),$\\ 
$y_1(0),\hdots,y_{N_y}(0),q_1(0), \hdots, q_{N_z}(0), z_1(0), \hdots,
z_{N_z}(0))$ where $x_i(0) > 0, y_j(0)>0,$ $z_k(0)>0$. 
In this section we study the boundedness of the variables. 
First, the variables all stay in ${\mathbb R}^+$, as their dynamics are non
negative when the variable is null.  

Then we know that the biomasses remain positive:
\begin{lemma}
\[
\begin{array}{l}
\forall i \in \{1,\cdots,N_x\}, x_i(0)>0 \Leftrightarrow \forall t, x_i(t)>0\\
\forall j \in \{1,\cdots,N_y\}, y_j(0)>0 \Leftrightarrow \forall t, y_j(t)>0\\
\forall k \in \{1,\cdots,N_z\}, z_k(0)>0 \Leftrightarrow \forall t, z_k(t)>0
\end{array}
\]
\end{lemma}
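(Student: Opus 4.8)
The plan is to treat each biomass equation, along a fixed solution, as a scalar linear ODE with a time-dependent coefficient and to integrate it by an exponential integrating factor; since such a factor never vanishes, positivity is automatically propagated. Each of the three lines is a biconditional, and the reverse implication ($\Leftarrow$) is immediate in every case: if a biomass is positive for every $t$, then in particular it is positive at $t=0$. So the real content is the forward implication, which I establish below.

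For the $x_i$ and $z_k$ equations I would fix a solution on its interval of existence and set $g_i(t) := \alpha_i(s(t)) - D$ and $g^z_k(t) := \gamma_k(q_k(t)) - D$. Because $s(\cdot)$ and $q_k(\cdot)$ are continuous (they are components of a $\mathcal{C}^1$ solution) and $\alpha_i$, $\gamma_k$ are continuous by Hypotheses \ref{hyp_func_x} and \ref{hyp_func_z}, these coefficients are continuous, hence integrable on every compact subinterval. The equation $\dot x_i = g_i(t)\,x_i$ then integrates to
\begin{equation*}
x_i(t) = x_i(0)\,\exp\!\left(\int_0^t g_i(\tau)\,d\tau\right),
\end{equation*}
and analogously for $z_k$ with $g^z_k$. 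The exponential factor is strictly positive and finite, so $x_i(t)$ carries the sign of $x_i(0)$; in particular $x_i(0)>0$ forces $x_i(t)>0$ for all $t$, and likewise for $z_k$.

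The only step requiring genuine care — and the one I expect to be the main obstacle — is the $y_j$ line, because $\beta_j$ is undefined at $(0,0)$, so I cannot even write the coefficient $\beta_j(s(t),y_j(t))-D$ before knowing $y_j(t)>0$. To close this gap I would run the same computation on the maximal interval $[0,\tau)$ on which $y_j$ stays positive: there $(s(t),y_j(t))$ lies in the domain where $\beta_j$ is $\mathcal{C}^1$, so $h_j(t) := \beta_j(s(t),y_j(t))-D$ is continuous and
\begin{equation*}
y_j(t) = y_j(0)\,\exp\!\left(\int_0^t h_j(\tau)\,d\tau\right) \qquad \text{on } [0,\tau).
\end{equation*}
Suppose, for contradiction, that $\tau$ is finite with $y_j(\tau)=0$. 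Since $\beta_j$ is increasing in $s$ with $\beta_j(0,y_j)=0$, one has $\beta_j\geq 0$, hence $h_j(t)\geq -D$ and $\int_0^\tau h_j \geq -D\tau > -\infty$. Letting $t\to\tau^-$ in the formula then gives $y_j(\tau)\geq y_j(0)\,e^{-D\tau}>0$, contradicting $y_j(\tau)=0$. Thus $y_j$ never reaches zero, which yields the forward implication for the $y_j$ line. The crux of the whole argument is precisely this lower bound $h_j\geq -D$: it prevents the exponent from diverging to $-\infty$ in finite time and is exactly what rescues the $y_j$ case from the singularity of $\beta_j$ at the origin.
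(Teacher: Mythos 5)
Your proof is correct and takes essentially the same route as the paper: both rest on the nonnegativity of the growth rates, which yields the exponential lower bound $x_i(t)\geq x_i(0)e^{-Dt}$ (the paper phrases this as the differential inequality $\dot x_i > -Dx_i$, you phrase it via an integrating factor and then bound the exponent below by $-Dt$). Your extra care with the singularity of $\beta_j$ at $(0,0)$ --- working on the maximal interval of positivity of $y_j$ --- is a refinement of rigor that the paper's one-line proof omits, but the substance of the argument is identical.
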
 \label{lem:positivity}
\begin{proof}
Because of the lower bounds on the dynamics ($\dot x_i > -Dx_i$ for the free
bacteria for example), the biomasses are lower bounded by exponentials
decreasing at a rate $D$: 
\[
\forall t, x_i(t)>x_i(0) e^{-Dt}>0
\]
\end{proof}

Then, to upperbound the variables we define
\[
	M = s + \sum_{i=1}^{N_x} x_i + \sum_{j=1}^{N_y} y_j + \sum_{k=1}^{N_z} q_k z_k
\]
the total concentration of intra and extracellular substrate in the chemostat.
The computation of its dynamics gives
\begin{equation} \label{eq:dot-M}
\dot M = D(s_{in}-M)
\end{equation}
so that $M$ converges exponentially towards $s_{in}$. This linear convergence implies the upper boundedness of $M$:
\[
\forall t \geq 0, \quad M(t) \leq M^m = \max(M(0),s_{in})
\]
Then $s$, $x_i$,$y_j$ and $q_k z_k$ are also upper bounded:
\begin{equation} \label{eq:s-low-Mm}
\begin{array}{l}
\forall t \geq 0, \quad s(t) \leq M(t) \leq M^m \\
\forall i \in \{1,\cdots,N_x\}, \forall t \geq 0, \quad x_i(t) \leq M(t) \leq M^m \\
\forall j \in \{1,\cdots,N_y\}, \forall t \geq 0, \quad y_j(t) \leq M(t) \leq M^m  \\
\forall k \in \{1,\cdots,N_z\}, \forall t \geq 0, \quad q_k (t) z_k(t) \leq M(t) \leq M^m
\end{array}
\end{equation}
We are now interested in the boundedness of the Q-model's cell quotas $q_k$ and biomasses $z_k$
\begin{lemma}
$\forall k, $ the $q_k$ variables are upper bounded by $\max(f_k^{-1}(\rho_k(M^m)), q_k(0))$
\end{lemma}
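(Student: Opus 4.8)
The plan is to exploit the bound on the substrate established in (\ref{eq:s-low-Mm}), namely $s(t)\le M^m$ for all $t\ge 0$, and to turn the scalar dynamics $\dot q_k=\rho_k(s)-f_k(q_k)$ into a one-dimensional comparison argument. Since $\rho_k$ is increasing by Hypothesis \ref{hyp_func_z}, the uptake term is bounded uniformly in time by $\rho_k(s)\le\rho_k(M^m)$, so that
\[
\dot q_k \le \rho_k(M^m) - f_k(q_k).
\]
This reduces the question to showing that the scalar vector field on the right-hand side points downward once $q_k$ is large enough.

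First I would check that the candidate barrier $\bar q_k := f_k^{-1}(\rho_k(M^m))$ is well defined. Because $s_{in}>0$ we have $M^m\ge s_{in}>0$, and since $\rho_k$ is increasing with $\rho_k(0)=0$ we obtain $\rho_k(M^m)>0$, i.e. $\rho_k(M^m)\in{\mathbb R}^+_\star$. As noted after Hypothesis \ref{hyp_func_z}, $f_k$ maps $(Q^0_k,+\infty)$ increasingly onto ${\mathbb R}^+_\star$, so $\bar q_k=f_k^{-1}(\rho_k(M^m))$ exists and satisfies $\bar q_k>Q^0_k$. In particular $f_k$ is increasing on $[\bar q_k,+\infty)$, which is all the following argument needs.

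The core step is a forward-invariance (first-crossing) argument for the set $\{q_k\le Q^m_k\}$, where $Q^m_k:=\max(\bar q_k,q_k(0))$. Suppose, for contradiction, that $q_k(t^\star)>Q^m_k$ for some $t^\star>0$, and set $t_1:=\sup\{t\in[0,t^\star]:q_k(t)\le Q^m_k\}$. Since $q_k(0)\le Q^m_k$ this supremum is attained, with $q_k(t_1)=Q^m_k$ by continuity, and $q_k(t)>Q^m_k\ge\bar q_k$ on $(t_1,t^\star]$. On that interval monotonicity of $f_k$ gives $f_k(q_k)>f_k(\bar q_k)=\rho_k(M^m)\ge\rho_k(s)$, hence $\dot q_k<0$; but then $q_k(t^\star)<q_k(t_1)=Q^m_k$, contradicting the choice of $t^\star$. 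Therefore $q_k(t)\le Q^m_k=\max(f_k^{-1}(\rho_k(M^m)),q_k(0))$ for all $t\ge 0$.

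The only delicate point is that $f_k$ is guaranteed monotone (and invertible) only on $(Q^0_k,+\infty)$, being identically zero below $Q^0_k$; the argument sidesteps this because the barrier $\bar q_k$ lies strictly above $Q^0_k$, so every comparison is carried out in the region where $f_k$ is increasing. One should also note that it is the boundedness of $s$, and not that of $q_k$ itself, that is invoked, so this lemma must be placed after (\ref{eq:s-low-Mm}), as it is here.
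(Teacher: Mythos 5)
Your proof is correct and takes essentially the same approach as the paper's: both use $s\le M^m$ together with the monotonicity of $\rho_k$ and $f_k$ to conclude that $\dot q_k\le 0$ whenever $q_k$ exceeds the barrier $f_k^{-1}(\rho_k(M^m))$. You simply make the paper's informal invariance statement (that $q_k$ cannot increase above this value) rigorous via an explicit first-crossing argument, and additionally verify that the barrier is well defined above $Q^0_k$.
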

\begin{proof}
For any $q_k > f_k^{-1}(\rho_k(M^m))$ there is an upper bound on $\dot q_k$:
\[
\dot q_k = \rho_k(s) - f_k(q_k) \leq \rho_k(s) - \rho_k(M^m) \leq 0
\]
so that $s\leq M^m$ implies that $q_k$ cannot increase if it is higher than $f_k^{-1}(\rho_k(M^m))$.
\end{proof}

\begin{lemma}
$\forall k, $ the $z_k$ variables are upper bounded by 
\begin{equation} \label{xibound}
z^m_k = \max\left(\frac{M^m}{\gamma_k^{-1}(D)}, z_k(0)\right)
\end{equation}
with the convention that $\gamma_k^{-1}(D)=+\infty$ if $\bar\gamma_k\leq D$
\end{lemma}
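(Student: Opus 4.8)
The plan is to reproduce, for the biomass $z_k$, the same barrier (invariance) argument that was just used for the quota $q_k$, but now driving it with the product constraint $q_k z_k \leq M^m$ from (\ref{eq:s-low-Mm}). The guiding observation is that $z_k$ can increase only while $\gamma_k(q_k) > D$, i.e. (by monotonicity of $\gamma_k$) only while $q_k > \gamma_k^{-1}(D)$; but when $z_k$ is large, the bound $q_k z_k \leq M^m$ forces $q_k$ to be small, and these two tendencies are incompatible beyond a certain level of $z_k$.

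First I would dispose of the degenerate case $\bar\gamma_k \leq D$. Since $\gamma_k$ is increasing with supremum $\bar\gamma_k$, we have $\gamma_k(q_k) \leq \bar\gamma_k \leq D$ for every admissible $q_k$, so $\dot z_k = (\gamma_k(q_k)-D)z_k \leq 0$ and $z_k$ is non-increasing, giving $z_k(t) \leq z_k(0)$. This matches the statement under the stated convention $\gamma_k^{-1}(D)=+\infty$, which makes $M^m/\gamma_k^{-1}(D)=0$ and hence $z^m_k=z_k(0)$.

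For the main case $\bar\gamma_k > D$, Hypothesis \ref{hyp_func_z} makes $\gamma_k$ a continuous increasing bijection onto $(0,\bar\gamma_k)$ for $q_k>Q^0_k$, so $\gamma_k^{-1}(D)$ is a well-defined finite positive number. The key implication is the following: if $z_k \geq M^m/\gamma_k^{-1}(D)$, then $q_k z_k \leq M^m$ yields $q_k \leq M^m/z_k \leq \gamma_k^{-1}(D)$, whence $\gamma_k(q_k)\leq D$ by monotonicity and therefore $\dot z_k = (\gamma_k(q_k)-D)z_k \leq 0$; that is, $z_k$ cannot increase once it reaches the level $M^m/\gamma_k^{-1}(D)$. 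To conclude I would run the standard ``no first crossing'' argument: setting $z^m_k=\max(z_k(0),\,M^m/\gamma_k^{-1}(D))$ we have $z_k(0)\leq z^m_k$, and if $z_k$ ever exceeded $z^m_k$, continuity would produce a first instant at which $z_k=z^m_k\geq M^m/\gamma_k^{-1}(D)$ with $\dot z_k>0$, contradicting the implication above; hence $z_k(t)\leq z^m_k$ for all $t$.

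The step needing the most care is the coupling: one must already have the product bound $q_k z_k \leq M^m$ in hand (it comes from $M\to s_{in}$ together with $M(t)\leq M^m$), and one must invoke the monotonicity and invertibility of $\gamma_k$ precisely at the threshold $\gamma_k^{-1}(D)$. Everything else is the routine invariance argument, the only bookkeeping subtlety being the convention that absorbs the case $\bar\gamma_k\leq D$.
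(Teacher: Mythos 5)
Your proof is correct and follows essentially the same route as the paper: both use the product bound $q_k z_k \leq M^m$ together with the monotonicity of $\gamma_k$ to show that $\dot z_k \leq 0$ once $z_k$ reaches the level $M^m/\gamma_k^{-1}(D)$, so that level acts as a barrier. Your explicit treatment of the degenerate case $\bar\gamma_k \leq D$ and the first-crossing argument merely spell out details the paper leaves implicit.
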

\begin{proof}
As $q_k z_k$ is upper bounded by $M^m$, there is an upper bound on $\dot z_k$ 
\[
\dot z_k=\left(\gamma_k(q_k)-D\right)z_k \leq \left(\gamma_k\left(\frac{M^m}{z_k}\right)-D\right)z_k
\]
so that $z_k$ cannot increase if it is larger than $\frac{M^m}{\gamma_k^{-1}(D)}$. 
\end{proof}

\begin{lemma} \label{lem:hat-s}
After a finite time $t_0$ there exists a lower bound $\hat s>0$ for $s$.
\end{lemma}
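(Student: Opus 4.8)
The plan is to show that the substrate input term $D(s_{in}-s)$ dominates the total uptake whenever $s$ is small, so that $\dot s$ is bounded away from $0$ near $s=0$; a barrier argument then confines $s$ above a positive level after a finite time. First I would use the uniform upper bounds already obtained to turn the $s$-equation in (\ref{model_norm}) into a scalar differential inequality. Since $x_i(t)\le M^m$ and $y_j(t)\le M^m$ by (\ref{eq:s-low-Mm}), and $z_k(t)\le z^m_k$ by (\ref{xibound}), and since every uptake function is non-negative (the $\alpha_i,\rho_k$ are increasing with $\alpha_i(0)=\rho_k(0)=0$, while $\beta_j$ is increasing in $s$ with $\beta_j(0,y_j)=0$), each subtracted term can be majorized. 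The only delicate term is the ratio-dependent Contois contribution $\beta_j(s,y_j)y_j$, which need not be small when $y_j$ is small. Here Hypothesis \ref{hyp:hypo_technique} is decisive: since $y_j\mapsto\beta_j(s,y_j)y_j$ is increasing and $y_j\le M^m$, we obtain $\beta_j(s,y_j)y_j\le\beta_j(s,M^m)M^m$. Collecting the three families of bounds gives
\[
\dot s \;\ge\; D(s_{in}-s)-U(s),\qquad U(s):=\sum_i \alpha_i(s)M^m+\sum_j \beta_j(s,M^m)M^m+\sum_k \rho_k(s)z^m_k.
\]

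Next I would observe that $U$ is continuous on $[0,\infty)$ and $U(0)=0$, because $\alpha_i(0)=0$, $\rho_k(0)=0$, and $\beta_j(0,M^m)=0$ (the last from Hypothesis \ref{hyp_func_y}, using $M^m\ge s_{in}>0$ and the continuity of $\beta_j$ at $(0,M^m)\neq(0,0)$). Hence $\phi(s):=D(s_{in}-s)-U(s)$ satisfies $\phi(0)=Ds_{in}>0$, so by continuity there exist $\hat s>0$ and $\varepsilon_0>0$ (for instance $\varepsilon_0=Ds_{in}/2$, shrinking $\hat s$ as needed) with $\phi(s)\ge\varepsilon_0$ for all $s\in[0,\hat s]$. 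Consequently $\dot s\ge\varepsilon_0>0$ whenever $s(t)\le\hat s$.

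Finally I would run the barrier argument. Whenever $s$ reaches the level $\hat s$ it cannot cross it downward, since at $s=\hat s$ one has $\dot s\ge\varepsilon_0>0$; and whenever $s(t)<\hat s$, it increases at rate at least $\varepsilon_0$, hence reaches $\hat s$ in time at most $(\hat s-s(t))/\varepsilon_0$. Therefore there is a finite $t_0$ (equal to $0$ if $s(0)\ge\hat s$, and at most $(\hat s-s(0))/\varepsilon_0$ otherwise) after which $s(t)\ge\hat s$ for all $t\ge t_0$, which is exactly the claim.

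I expect the main obstacle to be the uniform control of the Contois term $\beta_j(s,y_j)y_j$ as $s\to 0$: without the monotonicity supplied by Hypothesis \ref{hyp:hypo_technique}, a small $y_j$ could keep $\beta_j(s,y_j)$ large even for small $s$, and the clean majorization by $\beta_j(s,M^m)M^m$ — which is what forces $U(0)=0$ and hence $\phi(0)>0$ — would break down. Every other step is a routine continuity-plus-invariance argument once this bound is in place.
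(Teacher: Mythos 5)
Your proposal is correct and follows essentially the same route as the paper: both exploit Hypothesis \ref{hyp:hypo_technique} to majorize the Contois term $\beta_j(s,y_j)y_j$ by its value at the biomass upper bound, reduce the $s$-equation to a scalar inequality $\dot s\geq\phi(s)$ with $\phi(0)=Ds_{in}>0$, and conclude by a positive-invariance plus finite-time-reaching argument (the paper picks $\hat s$ via $\phi(\hat s)=Ds_{in}/2$ using monotonicity of $\phi$, while you use continuity at $s=0$, a negligible difference). Your closing remark correctly identifies the one genuinely non-routine ingredient, namely that without the monotonicity of $y_j\mapsto\beta_j(s,y_j)y_j$ the ratio-dependent term could not be uniformly controlled as $s\to 0$.
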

\begin{proof}

With hypothesis \ref{hyp:hypo_technique}, and as the biomasses are upper bounded, we see that $\dot s$ can be lower bounded
\[
\dot s \geq D(s_{in}-s) -\sum_{i=1}^{N_x} \alpha_i(s) x_i^m -\sum_{j=1}^{N_y} \beta_j(s,y^m_j) y^m_j - \sum_{k=1}^{N_z} \rho_k(s) z^m_k = \phi(s)
\]
where $\phi$ is a decreasing function of $s$, with $\phi(0) = D s_{in}$ and $\phi(s_{in})<0$.
By continuity of the $\phi$ function, there exists a positive value $\hat
s<s_{in}$ such that $\phi(\hat s) = D s_{in} / 2$. The region where $s \geq
\hat s$ is therefore positively invariant. Also $s$ is increasing for any
value lower than $\hat s$ with $\dot s \geq D s_{in}/2$ so that $s(t)$ reaches
$\hat s$ after some finite time $t_0$. 
%
\end{proof}

\begin{remark} This lemma eliminates any problem that could have arisen from
  the problem of definition of $\beta_j(s,y_j)$ in $(0,0)$. After the finite
  time $t_0$, no solution can approach this critical value anymore. 
\end{remark}

\begin{lemma}\label{qbound} \label{lem:qibound}
There exists a finite time $t_1\geq 0$ such that for any time $t\geq t_1$, $q_k(t) \in (Q^0_k,Q^m_k)$ 
with $Q^m_k = f_k^{-1}(\rho^m_k)$.
\end{lemma}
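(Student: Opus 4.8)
The plan is to prove the two one-sided bounds separately and then intersect the resulting time intervals, treating the coupling to $s$ only through the two scalar facts already established, namely $s(t)\leq M^m$ for all $t$ (see \eqref{eq:s-low-Mm}) and $s(t)\geq \hat s>0$ for $t\geq t_0$ (Lemma \ref{lem:hat-s}). The dynamics are $\dot q_k = \rho_k(s) - f_k(q_k)$ with $f_k(q_k)=\gamma_k(q_k)q_k$ increasing and onto ${\mathbb R}^+_\star$, so $Q^m_k=f_k^{-1}(\rho^m_k)$ is well defined. The two structural facts driving the argument are that the uptake term satisfies $\rho_k(s)\leq \rho_k(M^m)<\rho^m_k$ (since $\rho_k$ is increasing and bounded and $s$ stays finite), while the loss term $f_k(q_k)$ vanishes identically on $q_k\leq Q^0_k$ because $\gamma_k\equiv 0$ there.

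For the upper bound I would first show that $\{q_k<Q^m_k\}$ is positively invariant and is reached in finite time. Evaluating the vector field on the threshold gives, at $q_k=Q^m_k$, the value $\dot q_k=\rho_k(s)-f_k(Q^m_k)=\rho_k(s)-\rho^m_k\leq \rho_k(M^m)-\rho^m_k<0$, so the flow points strictly inward and $q_k$ cannot re-enter $\{q_k\geq Q^m_k\}$. For an initial value above $Q^m_k$, the same inequality holds while $q_k\geq Q^m_k$, giving a constant negative drift of size at least $\rho^m_k-\rho_k(M^m)$; hence $q_k$ crosses $Q^m_k$ after a finite time at most $(q_k(0)-Q^m_k)/(\rho^m_k-\rho_k(M^m))$.

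For the lower bound I would invoke Lemma \ref{lem:hat-s} to work on $t\geq t_0$, where $s\geq \hat s>0$. On the set $q_k\leq Q^0_k$ the loss term is zero, so $\dot q_k=\rho_k(s)\geq \rho_k(\hat s)>0$; thus $q_k$ grows at a rate bounded below by the positive constant $\rho_k(\hat s)$ whenever it sits at or below $Q^0_k$, and therefore exceeds $Q^0_k$ after a finite additional time. Since $\dot q_k=\rho_k(s)>0$ exactly at $q_k=Q^0_k$, the flow again points strictly inward, so $\{q_k>Q^0_k\}$ is positively invariant for $t\geq t_0$. Taking $t_1$ to be the maximum of $t_0$, the upper crossing time, and the lower crossing time then yields $q_k(t)\in(Q^0_k,Q^m_k)$ for all $t\geq t_1$.

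The delicate points, which I would verify carefully, are the strict gap $\rho_k(M^m)<\rho^m_k$ (resting on $\rho_k$ being strictly increasing and bounded, so its supremum is attained only in the limit $s\to+\infty$ while $s$ remains bounded by $M^m$) and the behaviour at the lower threshold, where $f_k$ is only piecewise defined: here one must note that $f_k(Q^0_k)=\gamma_k(Q^0_k)Q^0_k=0$ by continuity, so that the vector field genuinely pushes $q_k$ upward across $Q^0_k$. I expect the lower-bound invariance to be the main obstacle, precisely because it is the only part that requires the already nontrivial positive lower bound $\hat s$ on the substrate rather than elementary boundedness.
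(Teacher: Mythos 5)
Your proof is correct and follows essentially the same route as the paper's: a strictly negative drift $\dot q_k \leq \rho_k(M^m)-\rho^m_k<0$ on $\{q_k\geq Q^m_k\}$ using the bound $s\leq M^m$, and a strictly positive drift $\dot q_k=\rho_k(s)\geq\rho_k(\hat s)>0$ on $\{q_k\leq Q^0_k\}$ for $t\geq t_0$ using Lemma \ref{lem:hat-s}, each giving finite-time crossing plus invariance. The extra care you take at the two thresholds (the strict gap $\rho_k(M^m)<\rho^m_k$ and the vanishing of $f_k$ at $Q^0_k$) is implicit in the paper's argument and rests on the same hypotheses.
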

\begin{proof}
If $q_k(t) \geq Q^m_k$, then we have 
\[
\dot q_k \leq \rho_k(s) - f_k(Q^m_k) \leq \rho_k(M^m) - \rho^m_k < 0
\] 
for all $q_k \in [Q^m_k,q_k(0)]$, so that $q_k(t) < Q^m_k$ in finite time $t_1$ and for any $t\geq t_1$.

If $q_k(t) \leq Q^0_k$ with $t>t_0$ (defined in Lemma \ref{lem:hat-s}), then we have that  
\[
\dot q_k = \rho_k(s) \geq \rho_k(\hat s) > 0
\]
for all $q_k\,\in\,[q_k(t_0),Q^0_k]$, so that $q_k(t) > Q_k^0$ in finite time $t_1$ and for any $t\geq t_1$.

\end{proof}

This lemma is biologically relevant since minimum and maximum cell quotas are indeed known characteritics of phytoplankton species. 
For the rest of this paper we will consider that all the $q_k$ are in the $(Q^0_k,Q^m_k)$ intervals.

\begin{remark}
In the classical case of Michaelis-Menten uptake rates (\ref{rho}) and Droop growth rates (\ref{mu}) we have: 
\[
Q^m_k = Q^0_k + \frac{\rho^m_k}{\bar \gamma_k}
\]
\end{remark}

\subsection{From a "substrate" point of view... (How substrate concentration influences the system)}
\label{sec_Qi-Si}

Since model (\ref{model_norm}) is of dimension $1+N_x+N_y+2 N_z$, it is hard
to handle directly. In this section we introduce functions which clarify how
the $q_k$ and $y_j$ dynamics are influenced by $s$. This will enable us to
focus on the substrate concentration evolution, and thus reduce the dimension
in which the system needs to be analyzed.

\subsubsection{Internal cell quotas $q_k$ are driven by the substrate concentration $s$}

It is convenient to introduce the functions 
\begin{equation} \label{Qi}
Q_k(s) = f_k^{-1} (\rho_k(s))
\end{equation}
and
\begin{equation}
S^z_k(q_k) = Q_k^{-1}(q_k)
\end{equation}
With Hypothesis \ref{hyp_func_z} it is easy to check that $Q_k$ is defined,
continuous, increasing from $(0,+\infty)$ to $(Q_k^0,Q_k^m)$, so that $S^z_k$
is also well defined, continuous and increasing from $(Q_k^0,Q_k^m)$ to
$(0,+\infty)$. The $\dot q_k$ equation can then be written 
\begin{equation} \label{dot-qi_Qi}
\dot q_k = f_k(Q_k(s)) - f_k(q_k)
\end{equation}
or
\begin{equation}
\dot q_k = \rho_k(s) - \rho_k(S^z_k(q_k))
\end{equation}
Since $f_k(q_k)$ and $\rho_k(s)$ are increasing functions, we see how the dymanics of $q_k$ is influenced by the sign of $Q_k(s)-q_k$ (or $s-S^z_k(q_k)$):
\begin{equation} \label{eq:sign}
sign(\dot q_k) = sign(Q_k(s)-q_k) = sign(s-S^z_k(q_k))
\end{equation}
For a given constant substrate concentration $s$, the equilibrium value of $q_k$ is $Q_k(s)$. Conversely, $s$ must be equal to $S^z_k(q_k)$ for $q_k$ to be at equilibirum.

Function $Q_k$ realizes a mapping from the substrate axis to the cell quota
axis. Functions $S^z_k$ realizes a mapping from the cell quota axis to the
substrate axis. An illustration of the cell quotas behaviour is presented in
Figure \ref{fig_Qi-Si}.


\begin{figure}[htp]
\begin{center}
\includegraphics[width=2.5in]{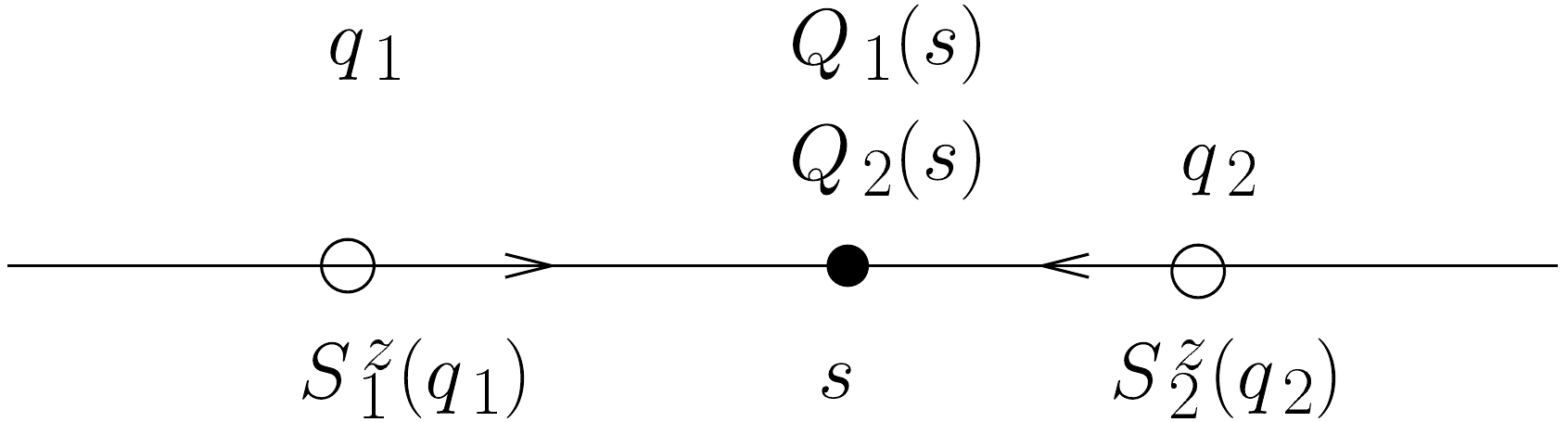}\\
\caption{Two equivalent statements: "$q_k$ goes towards $Q_k(s)$" and
  "$S^z_k(q_k)$ goes towards $s$" (see the sign Property (\ref{eq:sign})). The
  latter permits a one dimensional view of the $s$ and $q_k$ dynamics, on the
  substrate axis.} 
\label{fig_Qi-Si}
\end{center}
\end{figure}

\subsubsection{How the biomasses $y_j$ are driven by the substrate concentration $s$}
\label{sec_Yj_Sj}

For the C-species, it is also convenient to introduce functions $Y_j(s)$:
\begin{equation} \label{Yj}
\begin{array}{l}
\textrm{if } \beta_j(s,0)>D, \textrm{ then $Y_j(s)$ is defined by } \beta_j(s,Y_j(s)) = D\\
\textrm{if } \beta_j(s,0) \leq D, \textrm{ then } Y_j(s)=0
\end{array}
\end{equation}
and the inverse $S^y_j(y_j)$ functions:
\begin{equation}
\begin{array}{l}
\forall y_j>0, 
\left\{
\begin{array}{l}
\textrm{if } \exists  s_0 \textrm{ s.t. } \beta_j(s_0,y_j)>D, \textrm{ then $S^y_j(y_j)$ is defined by } \beta(S^y_j(y_j),y_j) = D\\
\textrm{else, } S^y_j(y_j)=+\infty
\end{array}
\right. \\
S^y_j(0) = \inf_{y_j>0} S^y_j(y_j)
\end{array}
\end{equation}
The values of $s$ such that $Y_j(s)=0$ correspond to values where the
substrate is too low for $y_j$ to survive ($y_j$ is not $s$-compliant at these
values). The values of $y_j$ such that $S^y_j(y_j)=+\infty$ correspond to
levels of biomass $y_j$ that cannot be sustained independently of the
substrate level.  

With Hypothesis \ref{hyp_func_y} it is easy to check that $Y_j$ is defined,
continuous, increasing from $\left(S^y_j(0),+\infty \right)$ to $\displaystyle
\left(0,\sup_{s\geq0} Y_j(s) \right)$, so that $S^y_j$ is also well defined,
continuous and increasing from $\displaystyle \left( 0,\sup_{s\geq0} Y_j(s)
\right)$ to $\left(S^y_j(0),+\infty \right)$.  

The $\dot y_j$ equation can then be written
\begin{equation} \label{dot-yj_Yj}
\dot y_j = (\beta_j(s,y_j) - \beta_j(s,Y_j(s)))  y_j
\end{equation}
or
\begin{equation}
\dot y_j = (\beta_j(s,y_j) - \beta_j(S^y_j(y_j),y_j)) y_j
\end{equation}
Thus with $y_j$ positivity (see Lemma \ref{lem:positivity}) we see how the
dymanics of $y_j$ are influenced by the sign of $Y_j(s)-y_j$ (or
$s-S^y_j(y_j)$): 
\begin{equation} \label{eq:sign_y}
sign(\dot y_j) = sign(Y_j(s)-y_j) = sign(s-S^y_j(y_j))
\end{equation}
For a given constant substrate concentration $s$, the equilibrium value of
$y_j$ is $Y_j(s)$. Conversely, $s$ must be equal to $S^y_j(y_j)$ for $y_j$ to
be at equilibirum. 

Function $Y_j$ realizes a mapping from the substrate axis to the cell quota
axis. Functions $S^y_j$ realizes a mapping from the cell quota axis to the
substrate axis. An illustration of the biomasses behaviour is presented in
Figure \ref{fig_Yj-Sj}.  

\begin{figure}[htp]
\begin{center}
\includegraphics[width=2.5in]{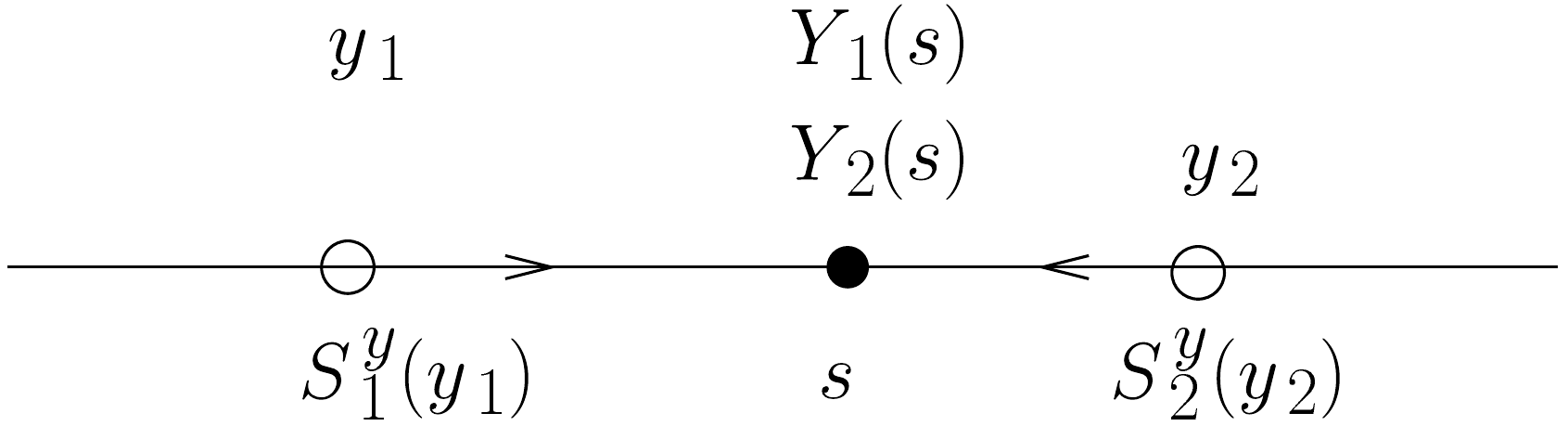}\\
\caption{Two other equivalent statements: "$y_j$ goes towards $Y_j(s)$" and
  "$S^y_j(y_j)$ goes towards $s$" (see the sign Property
  (\ref{eq:sign_y})). The latter permits a one dimensional view of the $s$ and
  $y_j$ dynamics, on the substrate axis.} 
\label{fig_Yj-Sj}
\end{center}
\end{figure}
Finally, with Figures \ref{fig_Qi-Si} and \ref{fig_Yj-Sj} we obtain a one
dimensional view of the $s$, $q_k$ and $y_j$ dynamics on the substrate
axis. The demonstration presented in this paper ensues mainly from this one
dimensional view of the system.

\subsection{The convergence of $s$ is related to the convergence of $q_k$ and $y_j$}

\begin{lemma} \label{conv_s-qi}
	In system (\ref{model_norm}) the five following properties are equivalent for any $s_0>\min_j(S^y_j(0))$:
	\begin{compactenum}[i)] 	
	\item
	$\lim_{t \rightarrow +\infty} s(t) = s_0$
	\item
	$\forall i, \lim_{t \rightarrow +\infty} q_k(t) = Q_k(s_0)$
	\item
	$\exists i, \lim_{t \rightarrow +\infty} q_k(t) = Q_k(s_0)$
	\item
	$\forall j, \lim_{t \rightarrow +\infty} y_j(t) = Y_j(s_0)$
	\item
	$\exists j, \lim_{t \rightarrow +\infty} y_j(t) = Y_j(s_0)>0$
	\end{compactenum}
	When $\lim_{t \rightarrow +\infty} s(t) = s_0 \leq \min_j(S^y_j(0))$, all the $q_k(t)$ converge to $Q_k(s_0)$ and the $y_j(t)$ to $Y_j(s_0)=0$.
\end{lemma}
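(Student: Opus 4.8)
The plan is to prove the two chains $(i)\Rightarrow(ii)\Rightarrow(iii)\Rightarrow(i)$ and $(i)\Rightarrow(iv)\Rightarrow(v)\Rightarrow(i)$, which together give the five-way equivalence; the last sentence of the lemma is then nothing but the forward implications $(i)\Rightarrow(ii)$ and $(i)\Rightarrow(iv)$ read in the regime $s_0\leq\min_j(S^y_j(0))$, where every $Y_j(s_0)=0$.

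The forward directions $(i)\Rightarrow(ii)$ and $(i)\Rightarrow(iv)$ both rest on a single ``driven attraction'' argument built from the sign properties (\ref{eq:sign}) and (\ref{eq:sign_y}). First I would fix $\epsilon>0$ and use the continuity of $Q_k$ (resp. $Y_j$) at $s_0$ together with $s(t)\to s_0$ to find a time $T$ after which $Q_k(s(t))$ stays within $\epsilon/2$ of $Q_k(s_0)$. Rewriting $\dot q_k$ as in (\ref{dot-qi_Qi}), whenever $q_k\geq Q_k(s_0)+\epsilon$ one gets $\dot q_k\leq f_k(Q_k(s_0)+\epsilon/2)-f_k(Q_k(s_0)+\epsilon)=:-c<0$ by strict monotonicity of $f_k$; this uniform negative rate forces $q_k$ into $[Q_k(s_0)-\epsilon,\,Q_k(s_0)+\epsilon]$ in finite time, and the sign property makes that interval positively invariant, so $q_k\to Q_k(s_0)$. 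The identical trapping applied to (\ref{dot-yj_Yj}) gives $y_j\to Y_j(s_0)$; the case $Y_j(s_0)=0$ is covered since $Y_j$ is continuous with $Y_j(s)\to0$, the trapping interval degenerating to $[0,\eta]$ with $\eta\to0$. This already establishes the concluding sentence of the lemma. The two trivial links are $(ii)\Rightarrow(iii)$ and $(iv)\Rightarrow(v)$: the latter is exactly where the hypothesis $s_0>\min_j(S^y_j(0))$ is used, as it guarantees an index $j^\star$ with $s_0>S^y_{j^\star}(0)$, hence $Y_{j^\star}(s_0)>0$, so that convergence of all the $y_j$ yields a limit that is strictly positive for $j^\star$.

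The main obstacle is the backward direction, and I would treat $(iii)\Rightarrow(i)$ and $(v)\Rightarrow(i)$ in parallel by a Barbalat-type argument. Since all variables are bounded by the results of Section \ref{sec:bounds} and $s$ remains in a compact interval $[\hat s,M^m]$ after the finite time of Lemma \ref{lem:hat-s}, the right-hand sides $\dot q_k$ and $\dot y_j$ are bounded and have bounded time-derivatives, hence are uniformly continuous. Assuming $q_k\to Q_k(s_0)$ (resp. $y_j\to Y_j(s_0)>0$), Barbalat's lemma yields $\dot q_k\to0$ (resp. $\dot y_j\to0$). From $\dot q_k\to0$ and (\ref{dot-qi_Qi}) I then get $\rho_k(s)\to\rho_k(s_0)$; from $\dot y_j\to0$ together with $y_j$ bounded away from $0$ I get $\beta_j(s,y_j)\to D=\beta_j(s_0,Y_j(s_0))$.

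In both backward cases a compactness argument closes the proof: any subsequential limit $s_\star$ of the bounded trajectory $s(t)$ must satisfy $\rho_k(s_\star)=\rho_k(s_0)$ (resp. $\beta_j(s_\star,Y_j(s_0))=D$), and strict monotonicity in $s$ of $\rho_k$ (resp. of $\beta_j$, by Hypothesis \ref{hyp_func_y}) forces $s_\star=s_0$; a bounded trajectory with a unique subsequential limit converges to it, so $s\to s_0$. The delicate points to verify carefully are the boundedness of the second derivatives needed for the uniform continuity in Barbalat's lemma, and the strict monotonicity that makes $\rho_k(\cdot)$ and $\beta_j(\cdot,y)$ injective on the compact range $[\hat s,M^m]$.
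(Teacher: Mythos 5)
Your proposal is correct, and it uses the same decomposition as the paper: the two chains $(i)\Rightarrow(ii)\Rightarrow(iii)\Rightarrow(i)$ and $(i)\Rightarrow(iv)\Rightarrow(v)\Rightarrow(i)$, with the forward implications coming from the attraction encoded in the sign properties (\ref{eq:sign}), (\ref{eq:sign_y}) (the paper treats these as immediate; your $\epsilon$-trapping argument just makes them explicit, and note that trapping $y_j$ from below uses $y_j(t)>0$, i.e.\ Lemma \ref{lem:positivity}). Where you genuinely diverge is the hard backward step $(iii)\Rightarrow(i)$ and $(v)\Rightarrow(i)$. The paper argues by contradiction with explicit quantitative estimates: if $s$ does not converge to $s_0$ it makes repeated excursions out of an $\eta$-interval, and since $|\dot s|\leq B$ each excursion keeps $s$ outside the $\eta/2$-interval for a duration at least $A(\eta)=\eta/(2B)$, during which $\dot q_k$ (resp.\ $\dot y_j$) is bounded below by a positive constant $C(\eta)$, forcing a drift that contradicts the assumed convergence of $q_k$ (resp.\ $y_j$). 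You instead apply Barbalat's lemma to get $\dot q_k\to 0$ (resp.\ $\dot y_j\to 0$), deduce $\rho_k(s)\to\rho_k(s_0)$ (resp.\ $\beta_j(s,y_j)\to D$), and conclude by injectivity plus uniqueness of subsequential limits. Both routes are sound. The paper's argument is elementary and self-contained, needing only the first-derivative bound (\ref{dot-s_bound}); yours is shorter and more modular, but its two ``delicate points'' are genuine obligations: uniform continuity of $\dot q_k,\dot y_j$ requires the trajectory to remain, after finite time, in compact sets on which the model functions are $\mathcal{C}^1$ — namely $s\geq\hat s$ (Lemma \ref{lem:hat-s}), $q_k$ bounded away from $Q_k^0$ (which follows from Lemma \ref{lem:qibound} together with $\dot q_k\geq\rho_k(\hat s)-f_k(q_k)$), and, in case $(v)$, $y_j$ bounded away from $0$ (automatic since $y_j\to Y_j(s_0)>0$) — and the injectivity step needs the monotonicity in Hypotheses \ref{hyp_func_z} and \ref{hyp_func_y} to be read as strict, which the paper implicitly assumes (otherwise the inverses $f_k^{-1}$, $Q_k^{-1}$ and $S^y_j$ it relies on would not be well defined).
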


\begin{proof}
	In the case $s_0>\min_j(S^y_j(0))$ we successively demonstrate five implications. \\
	$i => ii$ and $i => iv$: straightforward with the attraction
        (\ref{dot-qi_Qi}) of $q_k$ by $Q_k(s)$, and the attraction
        (\ref{dot-yj_Yj}) of $y_j$ by $Y_j(s)$. Note that $y_j(0)$ cannot be
        null (Lemma \ref{lem:positivity}).\\ 
	$ii => iii$ and $iv => v$: trivial implications. \\
	$iii => i$ (and $v=>i$): we equivalently demonstrate that the simultaneous convergence of $q_k$ (resp. $y_j$) and non convergence of $s$ lead to a contradiction.

	\begin{figure}[htp]
	\begin{center}
	\includegraphics[width=4in]{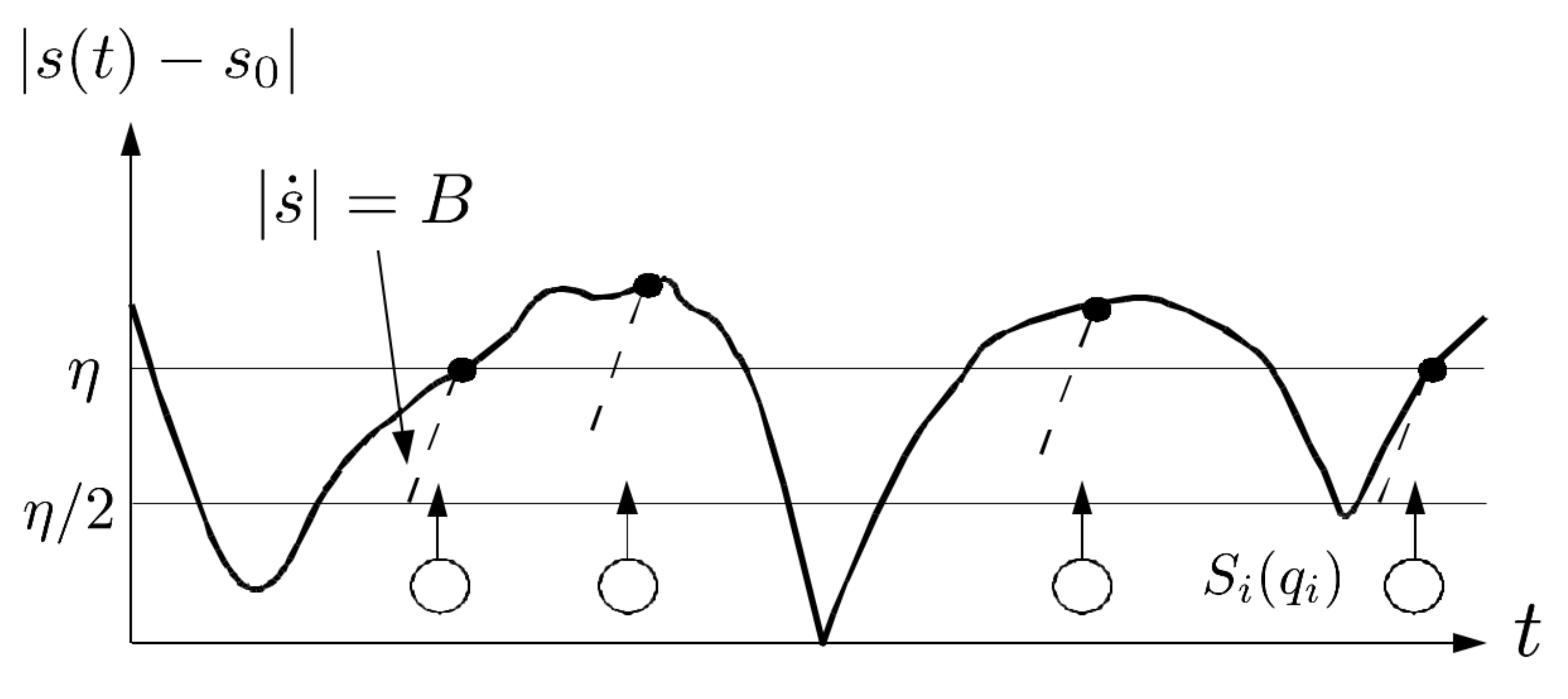}\\
	\caption{Visual explanation of the demonstration of Lemma \ref{conv_s-qi}.
	$s$ is repeatedly escaping a $\eta$-interval around $s_0$ (\emph{$\bullet$}). 
	Because $|\dot s|$ is upper bounded by $B$,  then $s$ is out of the $\eta/2$-interval during non negligible time intervals (\emph{dashed lines} represent $|\dot s| = B$). 
	$q_k$ (resp. $y_j$) is repeatedly attracted away from $Q_k(s_0)$ (resp. $Y_j(s_0)$) by $Q_k(s)$ (resp. $Y_j(s)$) (\emph{arrows})
	}
	\label{fig_conv-s-qi}
	\end{center}
	\end{figure}


	If $s$ does not converge towards $s_0$, it is repeatedly out of a $[s_0-\eta, s_0+\eta]$ interval, denoted $\eta$-interval:
	\[
	\exists \eta>0, \forall t>0, \exists t^s>t, |s(t^s)-s_0|>\eta
	\]
	In Figure \ref{fig_conv-s-qi}, $t^s$ time instants are represented by \emph{$\bullet$}.


	We can then use the upper-bounds (\ref{eq:s-low-Mm}) and (\ref{xibound}) on $s$, $x_i$, $y_j$ and $z_k$ to show the boundedness of the $s$ dynamics
	\begin{equation} \label{dot-s_bound}
	D(s_{in}-M^m)-\sum_{i=1}^{N_x} \alpha^m_i x^m_i - \sum_{j=1}^{N_y} \beta^m_j(0) y^m_j - \sum_{k=1}^{N_z} \rho^m_k z^m_k \leq \dot s \leq D s_{in}
	\end{equation}
	so that 
	\[
	|\dot s| \leq B
	\]
	with $\displaystyle B = \max\left(Ds_{in}, - D(s_{in}-M^m)+\sum_{i=1}^{N_x} \alpha^m_i x^m_i + \sum_{j=1}^{N_y} \beta^m_j(0) y^m_j +\sum_{k=1}^{N_z} \rho^m_k z^m_k \right) $. \\
	Then, every time $s$ is out of the $\eta$-interval, it must also have
        been out of the $\eta/2$-interval during a time interval of minimal
        duration $A(\eta)=\frac{1}{B}\frac{\eta}{2}$. (For a visual
        explanation see the \emph{dashed lines} of Figure \ref{fig_conv-s-qi},
        representing the increase caused by $|\dot s| = B$).


If for some $t^s$ we have $s(t^s) \geq s_0+\eta$, we then have that
$s(t^s)\geq s_0+\eta/2$ during the whole time-interval $[t^s-A(\eta),t^s]$. 
We can thus lower bound the dynamics of $q_k$ (resp. $y_j$) during that time-interval:
\[
\begin{array}{ll}
\dot q_k&=f_k(Q_k(s))-f_k(q_k)\\
&>f_k(Q_k(s_0+\eta/2)) - f_k(q_k) \\
\textrm{and} \\
\dot y_j&> (\beta_j(s_0+\eta/2,y_j) - D) y_j \\
&= (\beta_j(s_0+\eta/2,y_j) - \beta_j(s_0+\eta/2,Y_j(s_0+\eta/2))) y_j 
\end{array}
\]

	Now the convergence of $q_k$ to $Q_k(s_0)$ (resp. $y_j$ to $Y_j(s_0)$) is defined as
	\begin{equation} \label{conv_q}
	\forall \epsilon>0, \exists t^q>0,  \forall t>t^q, |q_k(t)-Q_k(s_0)|<\epsilon (\textrm{resp. } |y_j(t)-Y_j(s_0)|<\epsilon )
	\end{equation}
since we can pick $\epsilon$ such that $\epsilon<Q(s_0+\eta/4)-Q(s_0)$
(resp. $\epsilon<\min(Y_j(s_0+\eta/4)-Y_j(s_0),Y_j(s_0)-Y_j(s_0-\eta/4))$), we
then have, for $t>t_q$, that $q_k(t)<Q(s_0+\eta/4)$
(resp. $Y_j(s_0-\eta/4)<y_j(t)<Y_j(s_0+\eta/4)$).  Taking our $t^s$ larger
than the 
corresponding $t^q+A(\eta)$, we then have for all time $t\,\in\,[t^s-A(\eta),t^s]$
\[
\begin{array}{ll}
\dot q_k&> f_k(Q_k(s_0+\eta/2)) - \left(f_k(Q_k(s_0)+\epsilon)\right) \\
&>f_k(Q_k(s_0+\eta/2)) - f_k(Q_k(s_0+\eta/4))=C^q(\eta)>0\\
\textrm{and} \\
\dot y_j  &>  ( \beta_j(s_0+\eta/2,Y_j(s_0)+\epsilon) - \beta_j(s_0+\eta/2,Y_j(s_0+\eta/2)) ) y_j\\
&>( \beta_j(s_0+\eta/2,Y_j(s_0+\eta/4)) - \beta_j(s_0+\eta/2,Y_j(s_0+\eta/2)) ) y_j\\
&>  ( \beta_j(s_0+\eta/2,Y_j(s_0+\eta/4)) - \beta_j(s_0+\eta/2,Y_j(s_0+\eta/2)) ) Y_j(s_0-\eta/4)\\
 &=C^y(\eta) >0
\end{array}
\]
with $C^q(\eta)>0$ since $f_k$ is an increasing function of $q_k$, and $C^y(\eta)>0$ since $\beta_j$ is a decreasing function of $y_j$ 

We then define
\[
C(\eta)=\min\left(C^q(\eta),C^y(\eta)\right)
\]

If we then choose $\epsilon$ such that $\epsilon < \frac{C(\eta) \cdot
  A(\eta)}{2}$, then $|q_k(t^s) - q_k(t^s-A(\eta))| > 2 \epsilon$
(resp. $|y_j(t^s) - y_j(t^s-A(\eta))| > 2 \epsilon$), and we see that the
increase of $q_k$ (resp. $y_j$) makes it eventually get out of the
$\epsilon$-interval around $Q_k(s_0)$ (resp. $Y_j(s_0)$). This is a
contradiction, so that implication $iii => i$ (resp. $v => i$) holds. \\

Alternatively, if for some $t^s$ we have $s(t^s) \leq s_0-\eta$, then we can upper bound the dynamics of $q_k$ (resp. $y_j$) during the $[t^s-A(\eta),t^s]$ time-interval:
\[
\begin{array}{ll}
\dot q_k&<f_k(Q_k(s_0-\eta/2)) - f_k(q_k) \\
\textrm{and} \\
\dot y_j&< (\beta_j(s_0-\eta/2,y_j) - D) y_j \\
&= (\beta_j(s_0-\eta/2,y_j) - \beta_j(s_0-\eta/2,Y_j(s_0-\eta/2))) y_j 
\end{array}
\]
and the same arguments hold, with 
\[
\begin{array}{ll}
\dot q_k< f_k(Q_k(s_0-\eta/2)) - f_k(Q_k(s_0)-\eta/4)=C(\eta)<0 \\
\textrm{and} \\
\dot y_j  <  ( \beta_j(s_0-\eta/2,Y_j(s_0-\eta/4)) - \beta_j(s_0-\eta/2,Y_j(s_0-\eta/2)) ) Y_j(s_0+\eta/4)\\
 =C(\eta) <0
\end{array}
\]
and finally $\epsilon < \frac{-C(\eta) \cdot A(\eta)}{2}$ which causes the contradiction.
\end{proof}

\subsection{The equilibria correspond to the substrate subsistence concentrations}

In this section we present the equilibria of the generalized competition model (\ref{model_norm}). 
The first equilibrium of this model corresponds to the extinction of all the microorganisms species:
\[
\begin{array}{l}
E_0 = (s_{in}   \ ,\    0,\hdots,0    \ ,\     0,\hdots,0     \ ,\    0,\hdots,0   \ ,\     Q_1(s_{in}),\hdots,Q_N(s_{in}) )
\end{array}
\]
This equilibrium is globally attractive if the input substrate concentration
$s_{in}$ is not high enough for the species' growth to compensate their
withdrawal of the chemostat by the output flow $D$, that is if $\forall i,
\alpha_i(s_{in}) \leq D$ and $\forall j, \beta_j(s_{in},0) \leq D$ and
$\forall k, \gamma_k(Q_k(s_{in})) \leq D$ (proof of this result is easy and we
omit it; for getting a clear idea of the demonstration, see \cite{ArmMcG1980}
and \cite{SmiWal1995} for the Monly and Q-only cases). We suppose that we are
not in this situation through the following hypothesis: 
\begin{hypothesis} \label{hypo:Sin} 
We assume that one of the following condition is satisfied:
\begin{itemize}
\item
$\exists i, \alpha_i(s_{in}) > D$
\item
$\exists j, \beta_j(s_{in},0) > D $
\item
$\exists k, \gamma_k(Q_k(s_{in})) > D$
\end{itemize}
\end{hypothesis}
This guarantees that, at least for one of the families of species, there exists some index $i$, $j$, $k$ and some associated unique $s^{x\star}_i, s^{y\star}_j, s^{z\star}_k < s_{in}$ (denoted "subsistence concentration") such that 
\[
\begin{array}{l}
	\alpha_i(s^{x\star}_i) = D \\
	\beta_j(s^{y\star}_j, Y_j(s^{y\star}_j)) = D \quad \textrm{with} \quad s^{y\star}_j + Y_j(s^{y\star}_j) = s_{in}\\
	\gamma_k(Q_k(s^{z\star}_k)) = D
\end{array}
\]
Note that in the C-model, there exists an infinity of $s \in [S^y_j(0),s_{in})$ verifying $\beta_j(s, Y_j(s)) = D$. 
The value $s^{y\star}_j$ is then the substrate concentration required for
having species $j$ remaining alone in the chemostat at equilibrium. It has to
satisfy $s^{y\star}_j + Y_j(s^{y\star}_j) = s_{in}$ because of
(\ref{eq:dot-M}) that imposes $M=s+y_j=s_{in}$ at equilibrium.  

We number these species such that 
\begin{equation} \label{eq:nx}
\begin{array}{l}
	0< s^{x\star} = s^{x\star}_1 < s^{x\star}_2 < \hdots < s^{x\star}_{n_x}\leq s_{in} \\
	0< S^y_1(0) < S^y_2(0) < \hdots < S^y_{n_y}(0)\leq s_{in} \\
	0< s^{z\star} = s^{z\star}_1 < s^{z\star}_2 <... < s^{z\star}_{n_z}\leq s_{in} \\
	\textrm{with } \forall (i,j,k) \in (\{1,...,n_x\},\{1,...,n_y\},\{1,...,n_z\}),\, s^{x\star}_i \neq S^y_j(0) \neq s^{z\star}_k
\end{array}
\end{equation}
where $n_x$, $n_y$ and $n_z$ are the number or free bacteria, attached
bacteria and phytoplankton species having a subsistence concentration smaller
than $s_{in}$ for the given $D$; all other species cannot be positive at
equilibrium. Hypothesis \ref{hypo:Sin} implies that at least one of $n_x$,
$n_y$ and $n_z$ is non-zero.  
We denote $s^{x\star}$ and $s^{z\star}$ the lowest M- and Q- substrate
subsistence concentrations. We also denote $s^{y\star}$ the substrate
concentration that there would be at equilibrium if there were only attached
species in the chemostat (see \cite{GroMazRap2005}); since it needs to satisfy
(\ref{eq:dot-M}), it requires  
\[
s^{y\star} + \sum_{j=1}^{n_y} Y_j(s^{y\star}) = s_{in}
\]
Though the sum of $Y_j(s^{y\star})$ spans all the relevant indices, some
species might have $Y_j(s^{y\star})=0$ because they have
$s^{y\star}<S^y_j(0)<s_{in}$. If some $n_x$, $n_y$ or $n_z$ is zero, we set
the corresponding  $s^{x\star}$,  $s^{y\star}$ or $s^{z\star}$ to $s_{in}$
because none of the species from their family can survive at a substrate
concentration lower than $s_{in}$, which is the higher admissible
concentration.  

In the previous competitive exclusion studies
\cite{ArmMcG1980,SmiWal1995,GroMazRap2005} these quantities were of primer
importance, as they directed the result of competition. Here we show that the
competition outcome is strongly linked to 
\[
s^\star = \min(s^{x\star},s^{y\star}, s^{z\star})
\]
which is the lowest of all subsistence concentrations. Hypothesis \ref{hypo:Sin} implies that $s^\star<s_{in}$.

We do not consider the case where two subsistence concentrations are equal,
because we suppose that the biological parameters of each species are
different. In his broad historical review about competitive exclusion
\cite{Har1960} Hardin wrote: "no two things or processes in a real world are
precisely equal. In a competition for substrate, no difference in growth rate
or subsistence quota can be so slight as to be neglected".  
\begin{hypothesis}\label{Hyp:hardin}
$\forall (i,j,k) \in (\{1,...,n_x\},\{1,...,n_y\},\{1,...,n_z\}),\, s^{x\star}_i \neq S^y_j(0) \neq s^{z\star}_k$
\end{hypothesis}

\begin{figure}[htp]
	\begin{center}
	\includegraphics[width=4in]{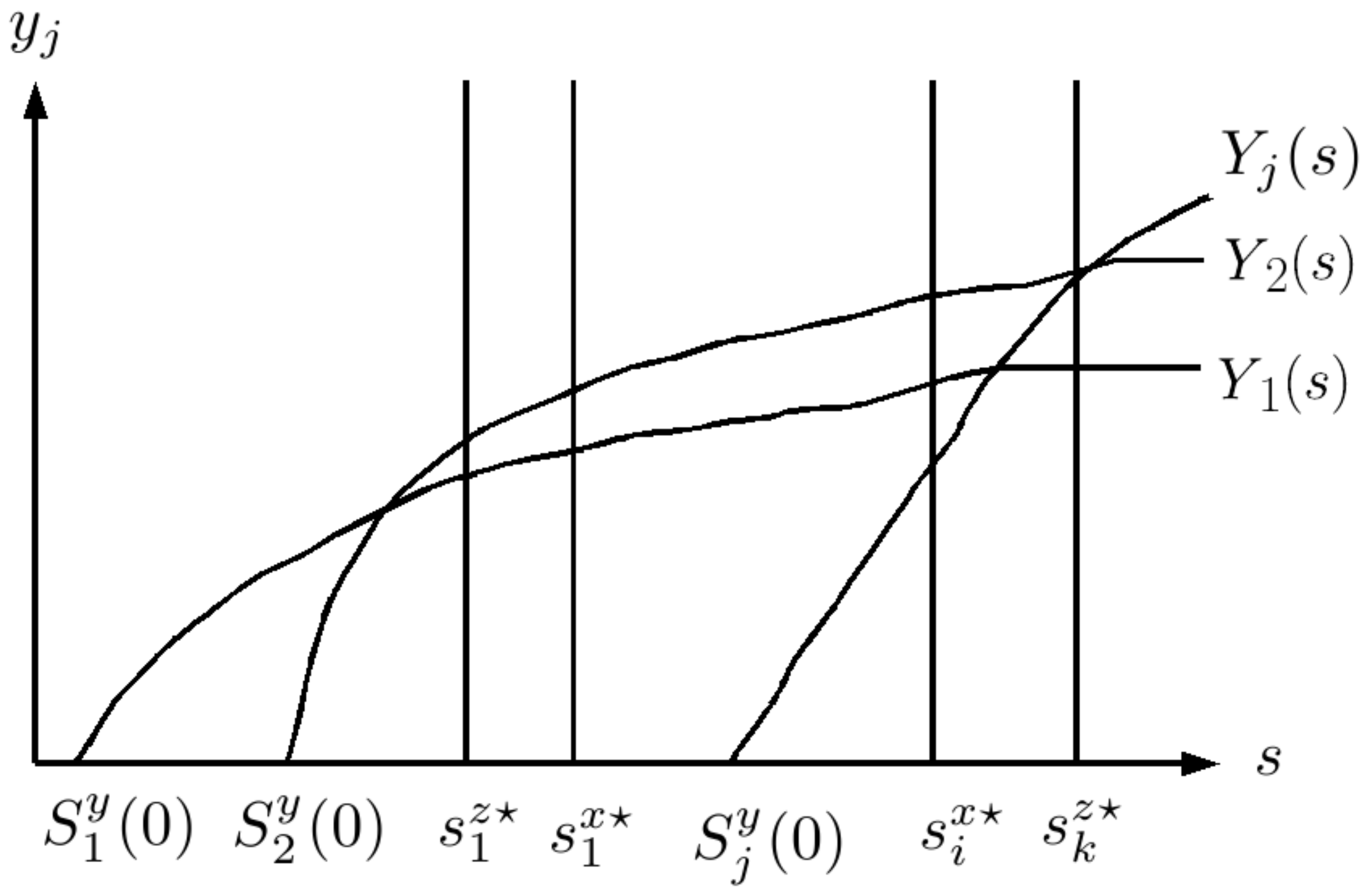}\\
	\caption{Subsistence concentrations of the M-model ($s^{x\star}_i$)
          and Q-model ($s^{z\star}_k$) species, and equilibrium biomass
          $Y_j(s)$ of the attached species, which enable these species to have
          a growth rate equal to the dilution rate $D$, and thus to be at
          equilibrium. We see that M- and Q-model species cannot coexist at
          equilibrium because they have only one fixed subsistence
          concentration, and $s$ cannot be simultaneously equal to several of
          these concentrations. On the contrary, attached species can coexist
          with others at equilibrium because they can have a growth equal to
          $D$ for any $s \in [S^y_j(0),s_{in})$, by adjusting their biomass
          concentration to $Y_j(s)$ (see definition (\ref{Yj})) 
	}
	\label{fig:s_star}
	\end{center}
\end{figure}
 The subsistence concentrations and $Y_j(s)$ functions are presented in Figure
 \ref{fig:s_star}. In this figure, we see that no free species model can
 coexist at equilibrium because $s$ cannot simultaneously be equal to
 $s^{x\star}_i$ and $s^{z\star}_k$. On the contrary, attached species
 verifying Hypothesis \ref{hypo:Sin} can support different $s$ value at
 equilibrium (between $S^y_j(0)$ and $s_{in}$), so that there exist equilibria
 where one M- or D-model species coexist with one or several attached species
 (see Figure \ref{fig:s_star} for a graphical explanation). On those
 equilibria, only the attached species verifying  
\begin{equation} \label{y_can_co}
S^y_j(0) < s^{x\star}_i \quad (\textrm{resp. } S^y_j(0) < s^{z\star}_k)
\end{equation}
can coexist as they can be at equilibrium at the subsistence concentration of the M-model (resp. D-model) species, by having a biomass equal to $Y_j(s^{x\star}_i)$ (resp. $Y_j(s^{x\star}_k)$).  

For these considerations, we can enunciate the following proposition which does not need to be proved:
\begin{lemma}\label{prop:compliance}
For a given $s_0$ substrate concentration, we have
\begin{itemize}
\item $x_i$ is $s_0$-compliant if $s_i^{x*}=s_0$;
\item $y_j$ is $s_0$-compliant is $S_j(0)<s_0$;
\item $z_k$ is $s_0$-compliant if $s_k^{z*}=s_0$
\end{itemize}
\end{lemma}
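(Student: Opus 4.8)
The plan is to verify the three bullets one at a time, reading each off directly from Definition \ref{def:compliance} together with the defining relations of the subsistence concentrations and the auxiliary functions $Q_k$, $Y_j$ and $S^y_j$. Since compliance only asks whether a species \emph{can} realize a growth rate equal to $D$ at the fixed substrate level $s_0$, in each case I would exhibit (or rule out) the internal state realizing that growth rate. In fact each stated sufficient condition turns out to be necessary and sufficient; I would note this, although only the ``if'' direction is used in the sequel.

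For the free bacteria $x_i$ the growth rate is $\alpha_i(s)$, a function of $s$ alone. By Hypothesis \ref{hyp_func_x} it is increasing, so $\alpha_i(s_0)=D$ has at most one root, and by definition that root is $s^{x\star}_i$. Hence $x_i$ is $s_0$-compliant exactly when $s^{x\star}_i=s_0$.

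For the phytoplankton $z_k$ the instantaneous growth rate $\gamma_k(q_k)$ depends on the quota rather than directly on $s$; but at a constant substrate level $s_0$ the quota can only rest at its equilibrium value $Q_k(s_0)$ (sign property (\ref{eq:sign})), so the only growth rate sustainable at $s_0$ is $\gamma_k(Q_k(s_0))$. As $\gamma_k$ and $Q_k$ are both increasing (Hypothesis \ref{hyp_func_z}), so is their composition, whence $\gamma_k(Q_k(s_0))=D$ has the unique solution $s_0=s^{z\star}_k$. This gives the second bullet.

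The attached bacteria $y_j$ form the only genuinely new case. Here the growth rate $\beta_j(s,y_j)$ depends on the adjustable biomass $y_j$, so $s_0$-compliance amounts to the existence of some $y_j\geq 0$ with $\beta_j(s_0,y_j)=D$. Since $\beta_j(s_0,\cdot)$ is decreasing with supremum $\beta_j(s_0,0)$ and infimum $0$ (Hypothesis \ref{hyp_func_y}), such a positive biomass exists precisely when $\beta_j(s_0,0)>D$, in which case $y_j=Y_j(s_0)>0$ by definition (\ref{Yj}). Using the established fact that $Y_j$ is increasing from $\left(S^y_j(0),+\infty\right)$ onto $\left(0,\sup_{s\geq 0}Y_j(s)\right)$ with $S^y_j$ its inverse, one has $Y_j(s_0)>0$ if and only if $s_0>S^y_j(0)$, which is exactly the third bullet. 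The only mild subtlety, and the step deserving a word of justification rather than any computation, is this reinterpretation of ``growth rate $D$'' for the $z_k$ and $y_j$ species through their internal variables $Q_k(s_0)$ and $Y_j(s_0)$; once that is granted, each bullet is immediate from monotonicity.
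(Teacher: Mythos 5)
Your proof is correct, and it is exactly the unpacking of Definition \ref{def:compliance} together with the monotonicity of $\alpha_i$, $\gamma_k\circ Q_k$ and $\beta_j(s_0,\cdot)$ that the paper has in mind: indeed the paper states this lemma ``does not need to be proved'' and gives no argument at all, so your write-up simply makes explicit what the authors treat as immediate. The only blemish is cosmetic: you call the $z_k$ case the ``second bullet'' and the $y_j$ case the ``third,'' which is the reverse of the ordering in the lemma statement.
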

It ensues that, for the corresponding C-species we have
\[
Y_j(s_0)>0
\]
which means that they can be at positive equilibrium under dilution rate $D$ and substrate concentration $s_0$. Thus, the $s^\star$-compliant species are:
\begin{itemize}
\item only the $s^\star$-compliant C-species, if $s^\star=s^{y\star}$
\item $x_1$ and all the $s^\star$-compliant C-species, if $s^\star=s^{x\star}$
\item $z_1$ and all the $s^\star$-compliant C-species, if $s^\star=s^{z\star}$
\end{itemize}
We now present all these equilibria and their stability in M-, C- and Q-only substrate competitions.

\subsection{M-only equilibria}\label{sub:M-only}
\[ 
\begin{array}{l}
E^x_i = (s^{x\star}_i   \ ,\    0,\hdots,x_i^\star,\hdots,0    \ ,\     0,\hdots,0   \ ,\    0,\hdots,0 \ , \ Q_1(s^{x\star}_i),\hdots,Q_{N_z}(s^{x\star}_i)   )  \\
\textrm{with } \  x_i^\star = s_{in} - s^{x\star}_i
\end{array}
\]
each of these M-only equilibria corresponds to the winning of competition by
free bacteria species $i$; such an equilibrium only exists for
$i\,\in\,\{0,\cdots,n_x\}$ (all other species cannot survive at a substrate
level lower than $S_{in}$ fot the given $D$). In a competition between several
free bacteria, eqilibrium $E^x_1$ (with lowest substrate subsistence
concentration $s^{x\star}_1$) is asymptotically globally stable, while all the
others are unstable \cite{ArmMcG1980}. 

\subsection{Q-only equilibria}
\[ 
\begin{array}{l}
E^z_k = (s^{z\star}_k   \ ,\    0,\hdots,0    \ ,\     0,\hdots,0  \ ,\    0,\hdots,z_k^\star,\hdots,0  \ ,\    Q_1(s^{z\star}_k),\hdots,Q_{N_z}(s^{z\star}_k)   )  \\
\textrm{with } \  z_k^\star = \frac{s_{in} - s^{z\star}_k}{Q_k(s^{z\star}_k)}
\end{array}
\]
Similarly to M-only equilibria, each of these phytoplankton only equilibria
correspond to the winning of competition by phytoplankton species $k$; such an
equilibrium only exists for $k\,\in\,\{0,\cdots,n_z\}$. In a competition
between several phytoplankton species, equilibrium $E^z_1$ (with lowest
substrate subsistence concentration $s^{z\star}_1$) is asymptotically globally
stable, while all the others are unstable \cite{SmiWal1995}. 

\subsection{C-only equilibria}\label{sec:equilibria}
We denominate $G$ a subset of $\{1,\cdots,n_y\}$ representing any C-species
coexistence. For example if we want to speak about species 1, 5 and 7
coexistence, then we use $G=\{1,5,7\}$. We then define $E^y_G$ the equilibrium
where these species coexist. It is composed by 
\begin{itemize}
\item $s^{y\star}_G$ such that $s^{y\star}_G + \sum_{j \in G} Y_j(s^{y\star}_G)=s_{in}$ because of (\ref{eq:dot-M})
\item $\forall j \in G, y_j=Y_j(s^{y\star})$
\item for any other $j$, $y_j=0$
\item $\forall i \in \{1,\cdots,N_x\}, x_i=0$
\item $\forall k \in \{1,\cdots,N_z\}, z_k=0$
\item $\forall k \in \{1,\cdots,N_z\}), q_k=Q_k(s^{y\star}_G)$
\end{itemize} 
there exist many $E^y_G$ equilibria, corresponding to all the possible $G$
subset. The globally asymptotically stable equilibrium of a competition with
only attached species is given by the choice $G=\{1,\cdots,n_y\}$
\cite{GroMazRap2005}. 
Note that some of the $G$ species can have a null biomass on these equilibria,
as $Y_j(s^{y\star}_G)$ might be null for some $j \in G$. Therefore $E_{G_1}^y$
and $E_{G_2}^y$ with $G_1\neq G_2$ are not necessarily different.  

We must here introduce a technical hypothesis which will be useful later to prove hyperbolicity of the equilibria.
\begin{hypothesis} \label{hyp:technique}
For all $G$ and all $j$ : $S_j(0) \neq s^{y\star}_G$
\end{hypothesis}

\subsection{Coexistence equilibria}\label{sub:coexistence}
As previously said in this section, there also exist equilibria where one of
the free species coexist with several $s^{x\star}_i$- or
$s^{z\star}_k$-compliant  attached bacterial species. 
For a coexistence with free bacteria species we denote them $E^{(x,y)\star}_{i,G}$. 
They are composed of:
\begin{itemize}
\item $s=s^{x\star}_i$
\item $\forall j \in G, y_j=Y_j(s^{x\star}_i)$
\item for any other $j$, $y_j=0$
\item $\forall l \neq i, x_l=0$
\item $\forall k \in \{1,\cdots,N_z\}, z_k=0$
\item $\forall k \in \{1,\cdots,N_z\}, q_k=Q_k(s^{x\star}_i)$
\item $x_i=s_{in}-s^{x\star}_i-\sum_{j \in G} Y_j(s^{x\star}_i)$ (this value will be denoted $\bar x_i^G$)
\end{itemize} 
Similarly, for a coexistence with phytoplankton species we denote them $E^{(z,y)\star}_{k,G}$ 
. They are composed of:
\begin{itemize}
\item $s=s^{z\star}_k$ 
\item $\forall j \in G, y_j=Y_j(s^{z\star}_k)$
\item for any other $j$, $y_j=0$
\item $\forall i \in \{1,\cdots,N_x\}, x_i=0$
\item $\forall l \neq k, z_l=0$
\item $\forall l \in \{1,\cdots,N_z\}, q_l=Q_l(s^{z\star}_k)$
\item $z_k=\frac{s_{in}-s^{z\star}_k-\sum_{j \in G} Y_j(s^{z\star}_k)}{Q_k(s^{z\star}_k)}$ (this value will be denoted $\bar z_k^G$) 
\end{itemize} 
To our knowledge, these equilibria have never been studied until now.

Note that some of these equilibria might be reduntant with M- or Q-only
equilibria, if all the C-species represented by $G$ are not $s^{x\star}_i$- or
$s^{z\star}_k$-compliant. Note also that all those equilibria do not
necessarily exist in the non-negative orthant. Indeed, $\bar x_i^G$ and $\bar
z_k^G$ can be negative, depending on $s_{in}$ and on the substrate subsistence
concentrations. These equilibria with negative components will not be studied
any further since we only consider initial conditions in the positive orthant,
which is invariant. In the sequel, we will denote $E$ an equilibrium of
(\ref{model_norm}) which belongs to an unspecified class.  

We will now show that if $s=s^\star$ at equilibrium, there exists a positive equilibrium containing all $s^\star$-compliant species.
\begin{lemma}$\;$
\begin{itemize}
\item
If $s^\star=s^{x\star}$ then $E^{(x,y)\star}_{1,\{1,\cdots,n_y\}}$ is in the positive orthant.
\item
If $s^\star=s^{y\star}$ then $E^{y\star}_{\{1,\cdots,n_y\}}$ is in the positive orthant.
\item
If $s^\star=s^{z\star}$ then $E^{(z,y)\star}_{1,\{1,\cdots,n_y\}}$ is in the positive orthant.
\end{itemize}
\end{lemma}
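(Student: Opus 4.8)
The plan is to verify, equilibrium by equilibrium, that every coordinate is non-negative. By construction almost all coordinates are manifestly non-negative, so the entire content of the lemma is the control of the single ``free-species'' coordinate $\bar x_1^G$ or $\bar z_1^G$, which the preceding discussion has explicitly warned may be negative for a general choice of $G$ and a general $s_{in}$.

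First I would dispose of the coordinates that require no work. In each of the three equilibria the substrate coordinate equals a subsistence concentration and is positive; each attached coordinate is $y_j=Y_j(\cdot)\ge 0$ because $Y_j$ takes values in $\left[0,\sup_{s\ge 0}Y_j(s)\right)$; each quota is $q_k=Q_k(\cdot)\in(Q^0_k,Q^m_k)$, hence positive; and all the remaining free-species coordinates are set to $0$. In the middle case $s^\star=s^{y\star}$ the equilibrium $E^{y\star}_{\{1,\cdots,n_y\}}$ carries no free-species coordinate at all, so this observation already settles that case.

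It then remains to bound $\bar x_1^{\{1,\cdots,n_y\}}$ (case $s^\star=s^{x\star}$) and $\bar z_1^{\{1,\cdots,n_y\}}$ (case $s^\star=s^{z\star}$) from below. I would use the defining relation of $s^{y\star}$, namely $s^{y\star}+\sum_{j=1}^{n_y}Y_j(s^{y\star})=s_{in}$, together with the fact that each $Y_j$ is non-decreasing on ${\mathbb R}^+$ (it vanishes on $[0,S^y_j(0)]$ and is increasing beyond). Since $s^\star=s^{x\star}$ forces $s^{x\star}\le s^{y\star}$, monotonicity gives $\sum_{j=1}^{n_y}Y_j(s^{x\star})\le\sum_{j=1}^{n_y}Y_j(s^{y\star})$, whence
\[
\bar x_1^{\{1,\cdots,n_y\}}=s_{in}-s^{x\star}-\sum_{j=1}^{n_y}Y_j(s^{x\star})\ge s_{in}-s^{x\star}-\sum_{j=1}^{n_y}Y_j(s^{y\star})=s^{y\star}-s^{x\star}.
\]
Because distinct species have distinct subsistence concentrations (Hypothesis \ref{Hyp:hardin} and the surrounding discussion), $s^{x\star}\neq s^{y\star}$, so in fact $s^{x\star}<s^{y\star}$ and $\bar x_1^{\{1,\cdots,n_y\}}>0$. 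The case $s^\star=s^{z\star}$ is identical: the same chain yields $s_{in}-s^{z\star}-\sum_{j=1}^{n_y}Y_j(s^{z\star})\ge s^{y\star}-s^{z\star}>0$, and dividing by $Q_1(s^{z\star})>0$ shows $\bar z_1^{\{1,\cdots,n_y\}}>0$.

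I expect the only genuine subtlety to lie in the monotonicity step and in getting the direction of the inequality right: one must keep in mind that $s^{y\star}$ is defined through the full sum over $\{1,\cdots,n_y\}$, that $Y_j$ is only non-decreasing (constant, equal to zero, below $S^y_j(0)$) rather than strictly increasing everywhere, and that it is the strict separation of the subsistence concentrations that upgrades $\ge 0$ to $>0$. The degenerate sub-case $n_y=0$ (empty $G$, and $s^{y\star}=s_{in}$ by the stated convention) is handled by the very same formula, since then the bound reads $\bar x_1\ge s_{in}-s^{x\star}>0$, and so requires no separate argument.
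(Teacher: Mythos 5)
Your proof is correct and follows essentially the same route as the paper's: the trivial non-negativity of the $s$, $y_j$, $q_k$ coordinates, then the defining relation $s^{y\star}+\sum_{j=1}^{n_y}Y_j(s^{y\star})=s_{in}$ combined with the monotonicity of the $Y_j$ and the strict separation $s^{x\star}<s^{y\star}$ (resp. $s^{z\star}<s^{y\star}$) to get $\bar x_1>0$ (resp. $\bar z_1>0$). Your explicit lower bound $\bar x_1\ge s^{y\star}-s^{x\star}$ and your remark on the degenerate case $n_y=0$ are slightly more careful than the paper's wording, but the argument is the same.
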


\begin{proof}
\begin{itemize}
\item
If $s^\star=s^{x\star}$, then all $y_j=Y_j(s^{x\star})\geq 0$ at equilibrium
and $s^{y\star}+\sum_{j=1}^{n_y} Y_j(s^{y\star})=s_{in}$ implies that
$s^{x\star}+\sum_{j=1}^{n_y} Y_j(s^{x\star})<s_{in}$ since
$s^{x\star}<s^{y\star}$ and $Y_j(s)$ is non-decreasing. It directly follows
that $x_1^\star=s_{in}-s^{x\star}-\sum_{j}^{n_y} Y_j(s^{x\star})>0$.   %
\item
If $s^\star=s^{y\star}$, then all $x_i$ and $z_k$ are zero at equilibrium and all  $y_j=Y_j(s^{y\star})\geq 0$
\item
If $s^\star=s^{z\star}$, then all $y_j=Y_j(s^{z\star})\geq 0$ at equilibrium
and $s^{y\star}+\sum_{j=1}^{n_y} Y_j(s^{y\star})=s_{in}$ implies that
$s^{z\star}+\sum_{j=1}^{n_y} Y_j(s^{z\star})<s_{in}$ since
$s^{z\star}<s^{y\star}$. It directly follows that
$z_1^\star=\frac{s_{in}-s^{z\star}-\sum_j^{n_z}
  Y_j(s^{z\star})}{Q_k(s^{z\star})}>0$. 
\end{itemize}
\end{proof}

We call $E^\star$ the equilibrium with all $s^\star$-compliant species
remaining in the chemostat, while all the others are excluded. Depending on
the species subsistence concentrations, $E^\star$ can be one of the previously
presented equilibria: 
\begin{itemize}
\item if $s^\star=s^{y\star}$ then $E^\star = E^{y\star}_{\{1,\cdots,n_y\}}$: only the $s^\star$-compliant C-species remain in the chemostat.
\item if $s^\star=s^{x\star}$ then $E^\star =
  E^{(x,y)\star}_{1,\{1,\cdots,n_y\}}$: the best free bacteria species (lowest
  $s^{x\star}_i$) remains in the chemostat with all the
  $s^{x\star}_i$-compliant C-species. 
\item if $s^\star=s^{z\star}$ then $E^\star =
  E^{(z,y)\star}_{1,\{1,\cdots,n_y\}}$: the best phytoplankton species (lowest
  $s^{z\star}_k$) remains in the chemostat with all the
  $s^{z\star}_k$-compliant C-species. 
\end{itemize}
In the next section, we present an important global stability result for this equilibrium.

\section{Statement and demonstration of the Main Theorem: competitive
  exclusion or coexistence in the generalized competition model} 

This theorem states that all the $s^\star$-compliant species (those who can be
at equilibrium with substrate subistence concentration $s^\star$, which is the
lowest of all $s^{x\star}.s^{y\star},s^{z\star}$) coexist in the chemostat at
equilibrium, while all the others are excluded.


\begin{main} \label{main}
	In the generalized competition model (\ref{model_norm}), if Hypotheses \ref{hyp_func_x}--\ref{hyp:technique} hold, 
then all the solutions of the system, having $x_i(0), y_j(0), z_k(0)>0$ for
all $s^*$-compliant species, converge asymptotically towards equilibrium
$E^\star$.  
\end{main}

\emph{Structure of the proof:} In a first step we reduce system
(\ref{model_norm}) to the mass balance surface. Then we present a non
decreasing lower bound $L(t)$ for $s(t)$, and use it to demonstrate that $s$
converges towards $s^\star$. Finally only the $s^*$-compliant species have a
large enough substrate concentration to remain in the chemostat, so that all
other M-, Q-, and C-model species are washed out. The final step consists in
showing that the convergence result that we showed on the mass-balance surface
can be extended to the whole non-negative orthant.   

\begin{remark}
It is not restrictive to consider $x_i(0), y_j(0), z_k(0)>0$ for the solutions
of the system since species with null initial condition can be ignored, so
that we can then consider a smaller dimensional system.  
\end{remark}

\subsection{Step 1: we consider the system on the mass balance surface and in the region where $q_k\,\in\,(Q_k^0,Q_k^m)$ for all $k\,\in\{1,...,N_z\}$}
\label{sec_MBE}
Lemma \ref{lem:qibound} indicates that $q_k$ reaches $(Q_k^0,Q_k^m)$ in finite time, and
in (\ref{eq:dot-M}) we showed that the total concentration of intra and extracellular substrate in the chemostat $M$ converges to $s_{in}$.

We denote "$\Sigma$", the generalized competition model (\ref{model_norm}) on the mass balance surface defined by
\begin{equation} \label{MBE_eq}
M = s + \sum_{i=1}^{N_x} x_i + \sum_{j=1}^{N_y} y_j + \sum_{k=1}^{N_z} q_k z_k = s_{in}
\end{equation}

For the remainder of the demonstration we will study system $\Sigma$, and we will
later show that its asymptotic convergence towards an equilibrium has the same
behaviour as the initial model (\ref{model_norm}). While studying system
$\Sigma$, we will however retain all the states of the original system and the
expressions of the equilibria; $\Sigma$ is then defined by the addition of the
invariant constraint (\ref{MBE_eq}).  


\subsection{Step 2: we propose a non decreasing lower bound $L(t)$ for $s$}
\label{sec_def-L}

The main obstacle for the demonstration of the Main Theorem was the
possibility that $s$ would repeatedly be lower than $s^\star$ and repeatedly
be higher than $s_n^\star=\max(s^{x*}_{n_x},s^{z*}_{n_z})$, which would generate an oscillating behaviour. In
order to eliminate this possibility we build a non decreasing lower bound for
$s$, which converges towards $s_1^\star$. We now present such a lower bound,
which will be used to show that $s$ converges to $s^\star$ in the next sections.

\begin{lemma}
In system $\Sigma$
\[
L(t)=\min \left( \min_k(S^z_k(q_k(t))), \min_j(S^y_j(y_j(t)), s^\star, s(t)  \right)
\]
is a non decreasing lower bound for $s$
\end{lemma}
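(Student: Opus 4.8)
The plan is to prove the two halves of the statement separately. The lower-bound property $L(t)\le s(t)$ is immediate, since $s(t)$ is itself one of the arguments of the minimum defining $L$. The real content is monotonicity, and I would establish it through the upper right Dini derivative $D^+L$. Since $L$ is the minimum of finitely many $\mathcal{C}^1$ functions of $t$ — namely the constant $s^\star$, the maps $t\mapsto S^z_k(q_k(t))$ and $t\mapsto S^y_j(y_j(t))$, and $s(t)$ — one has at each time $D^+L(t)=\min_{c\in\mathcal{A}(t)}\dot c(t)$, where $\mathcal{A}(t)$ is the set of components currently attaining the minimum. A continuous function with $D^+L\ge 0$ everywhere is non-decreasing, so it suffices to check that every \emph{active} component has non-negative time derivative, and I would do this case by case on the four component types.

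Three of the four cases are immediate. If $s^\star$ is active, its derivative is $0$. If some $S^z_k(q_k)$ is active, then $S^z_k(q_k(t))=L(t)\le s(t)$; since $S^z_k=Q_k^{-1}$ is increasing, the sign property (\ref{eq:sign}) gives $\mathrm{sign}\,\tfrac{d}{dt}S^z_k(q_k)=\mathrm{sign}\,\dot q_k=\mathrm{sign}(s-S^z_k(q_k))\ge 0$. The identical argument with (\ref{eq:sign_y}) handles an active $S^y_j(y_j)$, using that $S^y_j$ is increasing and $s\ge S^y_j(y_j)$.

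The case carrying the real content is when $s(t)$ itself is active, i.e. $s(t)=L(t)$, so that simultaneously $s\le s^\star$, $s\le S^z_k(q_k)$ for all $k$, and $s\le S^y_j(y_j)$ for all $j$; here I must show $\dot s\ge 0$. This is where I would invoke the reduction to the mass balance surface $\Sigma$: substituting $D(s_{in}-s)=D\big(\sum_i x_i+\sum_j y_j+\sum_k q_k z_k\big)$ into the $\dot s$ equation of (\ref{model_norm}) rewrites it as a sum of biomass-weighted terms,
\[
\dot s=\sum_i (D-\alpha_i(s))\,x_i+\sum_j (D-\beta_j(s,y_j))\,y_j+\sum_k (D q_k-\rho_k(s))\,z_k ,
\]
and I would argue each term is non-negative. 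For the free bacteria, $s\le s^\star\le s^{x\star}_1\le s^{x\star}_i$ (ordering (\ref{eq:nx})) with $\alpha_i$ increasing gives $\alpha_i(s)\le D$. For the attached bacteria, $s\le S^y_j(y_j)$ with $\beta_j(\cdot,y_j)$ increasing and $\beta_j(S^y_j(y_j),y_j)=D$ gives $\beta_j(s,y_j)\le D$. The phytoplankton term is the delicate one: from $s\le S^z_k(q_k)$ and $S^z_k=Q_k^{-1}$ increasing one gets $q_k\ge Q_k(s)$, whence $\rho_k(s)=f_k(Q_k(s))=\gamma_k(Q_k(s))\,Q_k(s)$; and since $s\le s^\star\le s^{z\star}_k$ with $Q_k,\gamma_k$ increasing one has $\gamma_k(Q_k(s))\le\gamma_k(Q_k(s^{z\star}_k))=D$, so that $\rho_k(s)\le D\,Q_k(s)\le D\,q_k$. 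Hence $Dq_k-\rho_k(s)\ge 0$ and $\dot s\ge 0$.

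I expect this last, phytoplankton, estimate to be the main obstacle: unlike the other two classes, the active condition $s\le S^z_k(q_k)$ controls only the \emph{quota} $q_k$ relative to $Q_k(s)$, and converting this into a bound on the uptake term $\rho_k(s)$ against $Dq_k$ requires chaining the identity $\rho_k=f_k\circ Q_k=\gamma_k\,Q_k$ with the definition of $s^{z\star}_k$ and the ordering $s^\star\le s^{z\star}_k$. (Species whose subsistence concentration is not below $s_{in}$ are handled identically, using $s<s_{in}$ and the monotonicity of $\alpha_i$, $\beta_j$, $\gamma_k\circ Q_k$.) Once all three families of terms are shown non-negative, every active component of $L$ has non-negative derivative, so $D^+L\ge 0$ everywhere and $L$ is non-decreasing.
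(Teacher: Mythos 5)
Your proposal is correct and follows essentially the same route as the paper: a case analysis on which component of the minimum is active (the right/Dini derivative argument), with the sign properties (\ref{eq:sign}) and (\ref{eq:sign_y}) handling the $S^z_k$ and $S^y_j$ cases, and the mass-balance substitution rewriting $\dot s$ as the same three biomass-weighted sums, each shown non-negative exactly as in the paper (including the chain $Q_k(s)\leq q_k$, $\gamma_k(Q_k(s))\leq D$ for the Q-species). Your explicit statement of the Dini-derivative formula for a minimum of $\mathcal{C}^1$ functions, and your remark covering species with no subsistence concentration below $s_{in}$, are slightly more careful renderings of steps the paper leaves implicit, but the argument is the same.
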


\begin{proof}
We know that the right derivative of $L$ is the derivative of one of the
function which realizes the minimum. In four cases we show that this right
derivative is non negative.

%
%
%

\begin{itemize}
\item Case 1: If $S^z_k(q_k(t))$ realizes the minimum then its derivative is non negative, because $S^z_k(q_k)$ goes towards $s$ (see (\ref{eq:sign})).

\item Case 2: If $S^y_j(y_j(t))$ realizes the minimum then its derivative is non negative, because $S^y_j(s)$ goes towards $s$ (see (\ref{eq:sign_y})).

\item Case 3: If $s(t)$ realizes the minimum then we examine its dynamics $\dot s$ for system $\Sigma$ ({\it i.e.} on the mass
balance equilibrium manifold). We replace $s_{in}$ by $s + \sum_i x_i + \sum_j y_j + \sum_k q_k z_k$:
\[
	\dot s = \sum_i (D - \alpha_i(s)) x_i + \sum_j (D - \beta_j(s,y_j)) y_j + \sum_k \left( D q_k - \rho_k(s) \right) z_k 
\]
which is equivalent to, from the definition (\ref{Qi}) of $Q_k(s)$ :
\[
	\dot s = \sum_i (D - \alpha_i(s)) x_i + \sum_j (D - \beta_j(s,y_j)) y_j + \sum_k \left( Dq_k - \gamma_k(Q_k(s))Q_k(s) \right) z_k
\]
Then 
\begin{itemize}
\item
for all $i$, $s \leq s^\star$ gives us $\alpha_i(s) \leq D$, so that the first sum is non negative;
\item 
for all $j$, $s \leq S^y_j(y_j))$ gives us $\beta_j(s,y_j) \leq \beta_j(S^y_j(y_j),y_j) = D)$, so that the second sum is non negative;
\item
for all $k$, $s \leq S^z_k(q_k)$ gives us $Q_k(s) \leq q_k$, and $s \leq
s^\star$ gives us $\gamma_k(Q_k(s)) \leq \gamma_k(Q_k(s_k^{z\star})) = D$ so
that the third sum is also non negative. 
\end{itemize}
Finally we obtain 
\[
\dot s \geq 0
\]
 
\item Case 4: If $s^\star$ realizes the minimum, we know that its right derivative is null and thus non negative.

\end{itemize}

\end{proof}

\subsection{Step 3: we demonstrate that $s$ converges towards $s^\star$}
\label{sec_s-conv-s1}

\begin{lemma} \label{s_conv_s1}
In system $\Sigma$
\[
\limt s(t) = s^\star
\]
\end{lemma}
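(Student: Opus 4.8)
The plan is to convert the monotone lower bound $L$ into a two-sided control on $s$. Since $L$ is a minimum that always contains the constant $s^\star$, we have $L\le s^\star$, and the previous lemma makes $L$ non-decreasing; hence $L(t)$ converges to some $L_\infty\le s^\star$. Because $L\le s$, this already gives $\liminf_{t\to+\infty}s(t)\ge L_\infty$, so the statement reduces to the two claims (i) $L_\infty=s^\star$ and (ii) $\limsup_{t\to+\infty}s(t)\le s^\star$. The first I expect to be routine given the machinery; the second is the real work.

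For (i) I would argue by contradiction on the $\omega$-limit set $\Omega$ of the trajectory, which is non-empty, compact and invariant because all variables are bounded (Section \ref{sec:bounds}). Since $L$ is continuous and non-decreasing along solutions, it is constant, equal to $L_\infty$, on $\Omega$. Assume $L_\infty<s^\star$. Then the constant term $s^\star$ never realises the minimum defining $L$ on $\Omega$, so at each point one of $S^z_k(q_k)$, $S^y_j(y_j)$ or $s$ equals $L_\infty$, while all of them stay $\ge L_\infty$ on $\Omega$. If $s>L_\infty$ at a point where, say, $S^z_k(q_k)=L_\infty<s$, the sign rule (\ref{eq:sign}) gives $\dot q_k>0$, so $S^z_k(q_k)$ is strictly increasing there and must have been $<L_\infty$ an instant earlier, contradicting invariance of $\Omega$; the same holds for $S^y_j(y_j)$ via (\ref{eq:sign_y}). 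Hence $s\equiv L_\infty$ on $\Omega$ and $\dot s=0$. Substituting $s\equiv L_\infty<s^\star$ into the expression for $\dot s$ on $\Sigma$ computed in Case 3 above, every summand is non-negative ($\alpha_i(L_\infty)<D$ since $L_\infty<s^{x\star}_i$; $\beta_j(L_\infty,y_j)\le\beta_j(S^y_j(y_j),y_j)=D$ since $y_j$ is over-supplied; and $Dq_k-\gamma_k(Q_k(L_\infty))Q_k(L_\infty)\ge0$ since $\gamma_k(Q_k(L_\infty))<D$ and $q_k\ge Q_k(L_\infty)$), so $\dot s=0$ forces every $x_i=0$, every $z_k=0$ and each $y_j\in\{0,Y_j(L_\infty)\}$. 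The mass balance (\ref{MBE_eq}) then reads $\sum_j y_j=s_{in}-L_\infty\le\sum_{j=1}^{n_y}Y_j(L_\infty)$, which contradicts $L_\infty+\sum_{j=1}^{n_y}Y_j(L_\infty)<s_{in}$ — strict because $L_\infty<s^{y\star}$ and $s\mapsto s+\sum_j Y_j(s)$ is increasing with value $s_{in}$ at $s^{y\star}$. Thus $L_\infty=s^\star$ and $\liminf_{t\to+\infty}s\ge s^\star$.

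For (ii), which I expect to be the main obstacle (it is precisely the ``upper oscillation'' the authors flag), I would exploit the winning species directly. Take the best free bacterium $x_1$ with $s^{x\star}_1=s^\star$ (the cases $s^\star=s^{z\star}$ and $s^\star=s^{y\star}$ being analogous, using $\gamma_1$ or a surviving C-species), and write $\frac{d}{dt}\ln x_1=\alpha_1(s)-D$. Boundedness of $x_1$ keeps $\int_0^t(\alpha_1(s)-D)\,d\tau$ bounded above for all $t$, while $\liminf s\ge s^\star$ makes $\alpha_1(s)-D$ at most slightly negative, by an amount controlled by how far $s$ dips below $s^\star$ — a gap that $L$ drives to zero. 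The intended mechanism is that any sustained excursion of $s$ above $s^\star$ contributes a definite positive amount to this integral, so such excursions cannot recur indefinitely without forcing $x_1$ past its upper bound; pushing this through would give $\limsup s\le s^\star$ and, with (i), $\limt s(t)=s^\star$. I expect essentially all the difficulty to concentrate here, in balancing the small negative drift of $\ln x_1$ accumulated while $s$ sits just below $s^\star$ against the positive drift of each excursion above $s^\star$: ruling out sparse, widely-spaced excursions seems to require either a persistence estimate keeping $x_1$ bounded away from $0$, or a sharper use of the finite-time floor $\hat s$ (Lemma \ref{lem:hat-s}) together with the rate at which $L\uparrow s^\star$, to control the total time spent below $s^\star$.
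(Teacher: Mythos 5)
Your step (i) is correct, and it is a genuinely different argument from the paper's: where the paper proves the analogous facts by explicit quantitative contradiction arguments, you run a LaSalle-type argument on the $\omega$-limit set $\Omega$ --- constancy of the monotone function $L$ on $\Omega$, the sign rules (\ref{eq:sign}), (\ref{eq:sign_y}) plus backward invariance forcing $s\equiv L_\infty$ on $\Omega$, then the signed decomposition of $\dot s$ on $\Sigma$ together with the mass balance (\ref{MBE_eq}) excluding $L_\infty<s^\star$. Modulo routine checks (continuity of $L$ on the compact region the trajectory eventually enters, the $q_k$ staying away from $Q^0_k$ and $Q^m_k$), this soundly gives $L(t)\to s^\star$ and hence $\liminf_{t\to+\infty}s(t)\ge s^\star$.

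Step (ii), however, is a genuine gap, and it is exactly the one you flag yourself. From $\liminf_{t\to+\infty} s\ge s^\star$ alone, $s$ may sit at $s^\star-\epsilon(t)$ with $\epsilon(t)\to0$ but $\int\epsilon\,dt=+\infty$, so $\ln x_1$ can drift to $-\infty$ between excursions; sparse excursions above $s^\star$ then never push $x_1$ against its upper bound, and no contradiction arises. Neither of your proposed repairs is available: a persistence estimate for $x_1$ is essentially the conclusion of the Main Theorem (circular at this stage), and nothing in step (i) controls the total time spent below $s^\star$. What the paper does instead --- and what the $\liminf$/$\limsup$ decomposition throws away --- is a dichotomy on whether $L$ \emph{attains} $s^\star$ in finite time. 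If it does (Case $a$, Appendix \ref{app_4a}), then $s(t)\ge L(t)=s^\star$ for all later times: there are no dips below $s^\star$ at all, so $x_1$ (resp.\ $z_1$) is \emph{non-decreasing}, the balancing problem you describe disappears, and each excursion adds a fixed positive increment, forcing divergence against the bounds (\ref{eq:s-low-Mm}) or (\ref{xibound}) (when $s^\star=s^{y\star}$ the contradiction is instead with (\ref{MBE_eq})). If it does not (Case $b$, Appendix \ref{app_4b}), the contradiction is obtained not from any biomass but from $L$ itself: during each upward excursion of $s$, the component $S^z_k(q_k)$ or $S^y_j(y_j)$ realizing the minimum is dragged up by a fixed amount, so it must have been that amount below $\hat L$ at the start of the excursion window, contradicting convergence of the non-decreasing $L$ to $\hat L$. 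Your step (i) proves $L\to s^\star$ but deliberately ignores whether the limit is attained, and that distinction is precisely what closes the proof: to finish along your lines you would still have to reinstate the dichotomy (your (i) in effect disposes of Case $b$, but Case $a$'s monotonicity argument, or an $\omega$-limit analogue showing $s\equiv s^\star$ on $\Omega$, remains to be supplied).
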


\begin{proof}

We  first show, by contradiction, that the substrate concentration $s(t)$ cannot
converge towards any constant value other than $s^\star$. Suppose the reverse hypothesis, {\it i.e.} $\limt s(t)=\bar s\neq s^\star$. Through
Lemma \ref{conv_s-qi}, we then have that $\limt q_k(t)=Q_k(\bar s)$ and $\limt y_j(t)=Y_j(\bar s)$. 

If $\bar s<s^\star$, 
\begin{itemize}
\item
$\alpha_i(\bar s) < D$ for all $i$ so that all $x_i$ go to $0$
\item
$\gamma_k(Q_k(\bar s))<D$ for all $k$ implies that all $z_k$ go to $0$
\end{itemize}
So that we have a contradiction with mass balance equilibrium (\ref{MBE_eq}),
as the total substrate (in the medium + in the biomasses) at equilibrium $\bar
s+ \sum_{j=1}^{N_y} Y_j(\bar s)$ will be lower than $s^{y\star}+
\sum_{j=1}^{N_y} Y_j(s^{y\star}) = s_{in}$. 

If $\bar s>s^\star$ we must consider three cases:
\begin{itemize}
\item 
if $s^\star=s^{x\star}_1$ then $\alpha_i(\bar s)>D$ implies that $x_1$
diverges to $+\infty$, which is in contradiction with the boundedness shown in
(\ref{eq:s-low-Mm}).  
\item 
if $s^\star=s^{z\star}_1$ then $\gamma_1(Q_1(\bar s))>D$ implies that $z_1$
diverges to $+\infty$, which is in contradiction with the boundedness shown in
(\ref{xibound}).  
\item
if $s^\star=s^{y\star}$ then we have a contradiction with mass balance
equilibrium (\ref{MBE_eq}), because $\bar s+ \sum_{j=1}^{N_y} Y_j(\bar s)$
will be higher than $s^{y\star}+ \sum_{j=1}^{N_y} Y_j(s^{y\star}) \leq
s_{in}$. 
\end{itemize}

Hence the impossibility of convergence of $s$ towards any
$\bar s$ other that $s^\star$ is proven.

We now demonstrate the lemma by contradiction. We assume that 
\[
s \textrm{ does not converge towards } s^\star
\]
which, from the previous remark means that $s$ does not converge to any constant value. 
\begin{remark}
As $s$ does not converge towards $s^\star$, we know that the $q_k$ do not converge towards $Q_k(s^\star)$ (see Lemma \ref{conv_s-qi})
\end{remark}
We consider two cases, which both lead to a contradiction, on the basis of a
reasoning which is close to the demonstration developed to prove Lemma
\ref{conv_s-qi}. \\ 

$\bullet$ Case $a$: $L$ attains $s^\star$ in finite time \\
In Appendix \ref{app_4a} we show that a contradiction occurs. \\
{\it Idea :} $s$ cannot stay higher than $s^\star$ without converging to
$s^\star$, because this would cause $x_1$ or $z_1$ to diverge, or $s+
\sum_{j=1}^{N_y} Y_j(s)$ to be always higher than $s_{in}$ without converging
to $s_{in}$ .

$\bullet$ Case $b$: $L$ never attains $s^\star$ \\
See Appendix \ref{app_4b}. \\
{\it Idea :} If $s$ did not converge to $s^\star$, the non decrease of $L$ and its attraction by $s$ would cause it to reach $s^\star$.

In both cases we found a contradiction, so that the proof of Lemma \ref{s_conv_s1} is complete.

\end{proof}

\subsection{Step 4: all the $s^\star$-compliant species remain in the chemostat, while the others are excluded}

In this section we show that, as $s$ converges towards $s^\star$ in model
$\Sigma$, all the free species with substrate subsistence concentration higher
than $s^\star$ are washed out of the chemostat because their growth
$\alpha_i(s)$ or $\gamma_k(q_k)$ cannot stay high enough to compensate the
output dilution rate $D$. Finally, all the $s^\star$-compliant species able to
be at equilibrium with a substrate concentration $s^\star$ remain in the
chemostat. 

\begin{lemma} \label{conv-x1}
In system $\Sigma$ all the solutions with positive initial conditions for the $s^*$-compliant species converge to $E^\star$.
\end{lemma}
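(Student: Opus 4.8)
The plan is to build directly on Lemma \ref{s_conv_s1}, which already gives $\limt s(t)=s^\star$ in system $\Sigma$; all that remains is to propagate this convergence to every other coordinate and to identify the limit with $E^\star$. First I would invoke Lemma \ref{conv_s-qi}: since $s\to s^\star$, the equivalence of convergences immediately yields $\limt q_k(t)=Q_k(s^\star)$ for every $k$ and $\limt y_j(t)=Y_j(s^\star)$ for every $j$. Thus the cell quotas and \emph{all} attached biomasses converge to exactly the values prescribed in the definition of $E^\star$ (note that $Y_j(s^\star)=0$ for every non-$s^\star$-compliant $j$, since then $s^\star<S^y_j(0)$), with no further work.

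Next I would wash out every free species that is not $s^\star$-compliant. For a free bacterium with $s_i^{x\star}>s^\star$, monotonicity of $\alpha_i$ gives $\alpha_i(s^\star)<\alpha_i(s_i^{x\star})=D$; by continuity there are $\delta>0$ and a finite time after which $\alpha_i(s(t))\le D-\delta$, so that $\dot x_i\le-\delta\, x_i$ and $x_i(t)\to0$ exponentially (the same estimate, via $\alpha_i(s^\star)<\alpha_i(s_{in})\le D$, disposes of the indices $i>n_x$). Symmetrically, for a phytoplankton species with $s_k^{z\star}>s^\star$ I would combine $q_k\to Q_k(s^\star)$ with the monotonicity of $\gamma_k\circ Q_k$ to get $\gamma_k(Q_k(s^\star))<\gamma_k(Q_k(s_k^{z\star}))=D$, hence $\gamma_k(q_k(t))\le D-\delta$ eventually and $z_k(t)\to0$. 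By the distinctness of the subsistence concentrations (the strict orderings in (\ref{eq:nx}) together with Hypothesis \ref{Hyp:hardin}), at most one free species is $s^\star$-compliant, so after this step all but at most one of the free coordinates are known to converge to $0$.

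The final and cleanest step exploits the fact that on $\Sigma$ the mass balance (\ref{MBE_eq}), namely $s+\sum_i x_i+\sum_j y_j+\sum_k q_k z_k=s_{in}$, holds identically. Since $s$, all $q_k$, all $y_j$ and every washed-out free coordinate have already been shown to converge, the constraint forces the single remaining $s^\star$-compliant free coordinate to converge as well, and pins its limit with no further dynamical analysis. When $s^\star=s^{y\star}$ there is no such coordinate and one reads off $E^\star=E^{y\star}_{\{1,\cdots,n_y\}}$ directly. When $s^\star=s^{x\star}$ the constraint gives $x_1\to s_{in}-s^\star-\sum_j Y_j(s^\star)=\bar x_1^{\{1,\cdots,n_y\}}$, and when $s^\star=s^{z\star}$ it gives $q_1 z_1\to s_{in}-s^\star-\sum_j Y_j(s^\star)$, whence $z_1\to\bar z_1^{\{1,\cdots,n_y\}}$ after dividing by $q_1\to Q_1(s^\star)$. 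In every case the limit is precisely $E^\star$.

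I do not expect a genuine obstacle here, because the oscillation difficulty was already neutralized in Lemma \ref{s_conv_s1}. The only points requiring care are the uniform-in-time wash-out estimates (making the ``eventually $\alpha_i(s)\le D-\delta$'' passage rigorous from the mere convergence of $s$), and the observation that it is the mass-balance constraint, rather than a direct study of the surviving species' dynamics, that pins down its limit—an argument available precisely because passing to $\Sigma$ reduces the dimension by one.
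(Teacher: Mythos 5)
Your proposal is correct and follows essentially the same route as the paper's proof: invoke Lemma \ref{conv_s-qi} to send $q_k\to Q_k(s^\star)$ and $y_j\to Y_j(s^\star)$, wash out the non-$s^\star$-compliant free species from $\alpha_i(s^\star)<D$ (resp. $\gamma_k(Q_k(s^\star))<D$), and use the mass-balance constraint (\ref{MBE_eq}) to pin down the limit of the single surviving free coordinate. The only difference is that you make rigorous, via the eventual bound $\alpha_i(s(t))\le D-\delta$, the wash-out step that the paper dismisses as ``straightforward.''
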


\begin{proof}
For all the $x_i$ and $z_k$ species such that $\alpha_i(s^\star)<D$ and
$\gamma_k(Q_k(s^\star))<D$, it is straigthforward that the convergence of $s$
to $s^\star$ will cause their biomass to converge to $0$. If
$s^\star=s^{y\star}$, then this is true for all the free species.  

For all the $s^\star$-compliant C-species, we have from Lemma \ref{conv_s-qi}
that their biomass will tend to $Y_j(s^\star)$, which is positive for the
$s^\star$-compliant species and null for all the others. 

Finally, if $s^\star=s^{x\star}$ or $s^\star=s^{z\star}$, then we have through
the mass balance equilibrium (\ref{MBE_eq}) that the free species whose
subsistence concentration is $s^\star$ will have its biomass converge to
$s_{in} - s^\star - \sum_{j=1}^{N^y_{z1}} Y_j(s^{\star})$: all the substrate
which is not present in the medium or in the attached biomasses is used by the
best M- or Q-competitor. 
\end{proof}

\subsection{Step 5: convergence of the solutions for model $\Sigma$ implies
  convergence for model (\ref{model_norm})}


In order to extend the convergence result to the full model and thus prove our Main Theorem, we 
apply a classical theorem for asymptotically autonomous system \cite{Thieme1992,SmiWal1995}. 

\begin{lemma}\label{lemma:final}
All solutions of system (\ref{model_norm}) with positive initial conditions
for the $s^*$-compliant species converge to $E^*$ defined in section
\ref{sec:equilibria}.  
\end{lemma}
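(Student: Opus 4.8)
The plan is to treat (\ref{model_norm}) as an \emph{asymptotically autonomous} system whose limit system is exactly $\Sigma$, and then invoke the convergence theorem of Thieme \cite{Thieme1992} (see also \cite{SmiWal1995}). Concretely, I would first change coordinates from $s$ to the aggregate variable $M = s + \sum_i x_i + \sum_j y_j + \sum_k q_k z_k$. By (\ref{eq:dot-M}) the dynamics of $M$ decouple, $\dot M = D(s_{in}-M)$, so $M(t)\to s_{in}$ exponentially. Substituting $s = M - \sum_i x_i - \sum_j y_j - \sum_k q_k z_k$ into the equations for $(x_i,y_j,q_k,z_k)$ turns (\ref{model_norm}) into a nonautonomous system whose time dependence enters \emph{only} through $M(t)$. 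Since $M(t)\to s_{in}$, this nonautonomous system is asymptotically autonomous, and its limit system is obtained by freezing $M=s_{in}$, i.e. it is precisely $\Sigma$, the system restricted to the mass-balance surface (\ref{MBE_eq}).

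\textbf{Key steps.} First I would record that all orbits of (\ref{model_norm}) are bounded (Section \ref{sec:bounds}), hence precompact, so that every forward $\omega$-limit set is nonempty, compact and connected. Second, Lemma \ref{conv-x1} already gives that $E^\star$ is globally attractive for the limit system $\Sigma$ (for initial data positive in the $s^\star$-compliant species): every solution of $\Sigma$ converges to $E^\star$. Third, I would establish that $E^\star$ is not merely globally attractive but \emph{locally asymptotically stable} for $\Sigma$; this is the role of the technical hypotheses (in particular Hypothesis \ref{hyp:technique}, together with \ref{Hyp:hardin} and \ref{hyp:hypo_technique}), which are designed to force hyperbolicity of the equilibria. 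Combined with global attractivity, hyperbolicity forces all eigenvalues of the linearization at $E^\star$ to have negative real part, giving local asymptotic stability. With these ingredients in hand, the asymptotically autonomous convergence theorem applies: the $\omega$-limit set of any bounded solution of (\ref{model_norm}) is internally chain recurrent for $\Sigma$ and contains a point whose $\Sigma$-orbit converges to $E^\star$; since $E^\star$ is a locally asymptotically stable equilibrium attracting that point, the theorem yields that the $\omega$-limit set reduces to $\{E^\star\}$, i.e. $\limt$ of the full solution is $E^\star$.

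\textbf{Main obstacle.} The delicate point is \emph{not} the setup but the verification of the precise hypotheses required by Thieme's theorem. Global attractivity of $E^\star$ in $\Sigma$ alone does not license the conclusion for the asymptotically autonomous system; one genuinely needs local asymptotic stability of $E^\star$, which is why the hyperbolicity argument (and hence Hypotheses \ref{hyp:hypo_technique}, \ref{Hyp:hardin} and \ref{hyp:technique}) is essential. I would therefore spend the bulk of the effort checking that $E^\star$ is hyperbolic under these hypotheses, and confirming that the closure of each forward orbit stays in the region where $\Sigma$ is well defined (away from the excluded singularity of $\beta_j$ at $(0,0)$, which is guaranteed after the finite time $t_0$ by Lemma \ref{lem:hat-s}), so that precompactness and the chain-recurrence characterization of the $\omega$-limit set both hold. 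Once local asymptotic stability and precompactness are secured, the invocation of \cite{Thieme1992,SmiWal1995} is routine and completes the proof.
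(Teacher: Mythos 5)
Your setup is the same as the paper's (rewrite (\ref{model_norm}) as the interconnection of $\dot M = D(s_{in}-M)$ with system $\Sigma$, then invoke the asymptotically autonomous machinery of \cite{Thieme1992,SmiWal1995}), and your insistence on hyperbolicity of the equilibria is indeed one of the ingredients the paper verifies in its appendix. But there is a genuine gap at the decisive step: you assert that the $\omega$-limit set of a solution of (\ref{model_norm}) ``contains a point whose $\Sigma$-orbit converges to $E^\star$,'' and you give no argument for it. That claim is equivalent to saying the $\omega$-limit set meets the $\Sigma$-basin of attraction of $E^\star$, i.e.\ contains a point at which \emph{all} $s^\star$-compliant species are strictly positive. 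Nothing forces this: positivity of the solution at every finite time does not prevent its $\omega$-limit set from lying entirely in the boundary of the orthant (this is exactly what happens to the losing species in any competitive-exclusion result), so a priori the $\omega$-limit set could be a boundary equilibrium such as $\tilde E_0$ or any of the other $\tilde E$'s, all of which are chain recurrent for $\Sigma$. Lemma \ref{conv-x1} cannot be applied to points of the $\omega$-limit set precisely because it requires positivity of the $s^\star$-compliant coordinates, which is what you would need to prove. So your invocation of Thieme's theorem, as written, only yields convergence to \emph{some} equilibrium (and even that requires the no-cycle condition you do not check), not convergence to $E^\star$.

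The paper closes this gap with the part of the argument your proposal omits: it explicitly characterizes the stable manifold of every equilibrium $\tilde E \neq \tilde E^\star$ of $\Sigma$, showing by a mass-balance comparison that $n_{\tilde E,\tilde s} > 0$ (every other equilibrium is missing at least one species that is $s$-compliant for some $s \leq \tilde s$, among which at least one $s^\star$-compliant species), and then runs an induction on the total number of species: setting those $n_{\tilde E,\tilde s}$ species to zero gives a reduced-order system to which the Main Theorem (induction hypothesis) applies, which exhibits the stable manifold of $E$ as a set on which some $s^\star$-compliant species vanishes identically. Only after this dimension-counting and identification step can one conclude that a solution with positive initial data for all $s^\star$-compliant species cannot converge to any equilibrium but $E^\star$. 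To repair your proof you would either have to reproduce this stable-manifold/induction argument, or prove a uniform persistence statement for the $s^\star$-compliant species along the full (nonautonomous) dynamics; neither follows from local asymptotic stability of $E^\star$ plus chain recurrence alone.
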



\begin{remark}
While, up to here, we simply had considered $\Sigma$ as the same system as
(\ref{model_norm}), in the same dimension, except that it was restricted to
(\ref{MBE_eq}), we will now equivalently explicitely include (\ref{MBE_eq})
into system (\ref{model_norm}) to obtain $\Sigma$ in the form of a system that
has one dimension less than (\ref{model_norm}) by omitting the $s$
coordinate. Since both representations of $\Sigma$ are equivalent, the
previously proven stability results are still valid in the new representation,
with the exception that convergence takes place towards equilibria directly
derived from these presented in sections
\ref{sub:M-only}-\ref{sub:coexistence} by omitting the $s$ coordinate. These
new equilibria are differentiated from the original ones by adding a \
$\tilde{}$\ \ , so that an arbitrary equilibrium is denoted $\tilde
E$. \end{remark}

\begin{proof}
System $\Sigma$ can be written as follows : 
\begin{equation}
\label{model_sigma}
\begin{array}{l}
\left\{
\begin{split}
	\dot x_i &= \left(\alpha_i\left(s_{in}-\sum_{l=1}^{N_x} x_l - \sum_{m=1}^{N_y} y_m - \sum_{r=1}^{N_z} q_r z_r  \right) - D\right) x_i \\
	\dot y_j &= \left(\beta_j\left(s_{in}-\sum_{l=1}^{N_x} x_l - \sum_{m=1}^{N_y} y_m - \sum_{r=1}^{N_z} q_r z_r,y_j\right) - D\right) y_j \\
	\dot z_k &= (\gamma_k(q_k) - D) z_k \\
	\dot q_k &= \rho_k\left(s_{in}-\sum_{l=1}^{N_x} x_l - \sum_{m=1}^{N_y} y_m - \sum_{r=1}^{N_z} q_r z_r  \right)- f_k(q_k) 
\end{split}
\right. 
\\
\textrm{ for } i \in \{1,\cdots,N_x\}, j \in \{1,\cdots,N_y\},k \in \{1,\cdots,N_z\} \\
\end{array}
\end{equation}
where the $s$ state has been removed compared to (\ref{model_norm}). In order to recover model (\ref{model_norm}), we should add the equation
\[
\dot M=D(s_{in}-M)
\]
which we interconnect with (\ref{model_sigma}) by replacing every $s_{in}$ in
(\ref{model_sigma}) with $M$. It is this interconnection that we will now
study.  

In the first part of the proof, we will show that every solution of
(\ref{model_norm}) converges to an equilibrium $E$. We will then show by
induction that all the solutions that do not converge to $E^*$ have an initial
condition with some $x_i=0$, $y_j=0$ or $z_k=0$ for some $s^*$-compliant
species. Thus, all the solutions with $x_i\neq 0, y_j\neq 0, z_k\neq 0$ for
the $s^*$-compliant species converge to $E^*$. 

 For that, we will use Theorem F.1 from \cite{SmiWal1995}. We will therefore first compute the  stable manifolds of all equilibria of $\Sigma$:
\begin{itemize}
\item
The stable manifold of $\tilde E^*$ is of dimension $N_x+N_y+2N_z$. It is
constituted of all the initial conditions which verify $x_i(0), y_j(0),
z_k(0)>0$ for $s^*$-compliant species and $x_i(0), y_j(0), z_k(0)\geq 0$ for
all other species, as well as $q_k\geq 0$ for all $k$ (see Lemma
\ref{conv-x1}). 
\item The stable manifold of $\tilde E_0$ is of dimension $N_x-n_x+N_y-n_y+ 2
  N_z - n_z$. It is constitued of all the initial conditions which verify
  $x_1(0)=\hdots=x_{n_x}(0)=0$, $y_1(0)=\hdots=y_{n_y}(0)=0$ and
  $z_1(0)=\hdots=z_{n_z}(0)=0$. The only species that can be present at the
  initial condition are those that cannot survive for the given $D$ and
  $s_{in}$. Indeed, if any $x_i(0)>0$ for $i\leq n_x$ (or similar $y_j(0)>0$
  or $z_k(0)>0$), one can apply Lemma \ref{conv-x1}. to the reduced order
  system containing these species to show that convergence does not take place
  towards $\tilde E_0$. Conversely, any initial condition with
  $x_1(0)=\hdots=x_{n_x}(0)=0$, $y_1(0)=\hdots=y_{n_y}(0)=0$ and
  $z_1(0)=\hdots=z_{n_x}(0)=0$ generates a solution that goes to $\tilde E_0$
  since for the other species we have: 
\begin{itemize}
\renewcommand{\labelitemi}{$\triangleright$}
\item $\dot x_i<(\alpha_i(s_{in})-D)x_i$, with $\alpha_i(s_{in})-D<0$ for all $i > n_x$ because of the definition of $n_x$ presented in (\ref{eq:nx});
\item $\dot y_j<(\beta_j(s_{in},y_j)-D)y_j$, with $\beta_j(s_{in},0)-D<0$ for all $j > n_y$, because of the definition of $n_y$;
\item $\dot z_k <(\gamma_k(Q_k(s_{in}))-D)z_k$, with $\gamma_k(Q_k(s_{in}))-D)<0$ for all $k > n_z$ because of the definition of $n_z$.
\end{itemize}

\item
The dimension of the stable manifold of any other $\tilde E$ can be computed
from Lemma \ref{conv-x1}. To an equilibrium $\tilde E$ corresponds a substrate
value $\tilde s$ ($>s^*$ by definition of $s^*$). Lemma \ref{conv-x1}
indicates that solutions of $\Sigma$ converge towards an equilibrium
corresponding to $\tilde s$, if there is no smaller subsistance concentration
corresponding to a species present in the system (for free species) and if all
M-, Q- and C-species that are $\tilde s$-compliant are present in the
corresponding equilibrium. The stable manifold of $\tilde E$  must therefore
be constrained to initial conditions that verify $x_i(0)=0$, $y_j(0)=0$ and
$z_k(0)=0$ for all species that are $s$-compliant for some $s\leq \tilde s$
and that are not positive in $\tilde E$. 
                                %
Having set all these values to zero, it is indeed clear that $\tilde s$ is the
$s^\star$ as defined in Lemma \ref{conv-x1} of the reduced order system
(without the aforementionned $x_i, y_j$ and $z_k$ coordinates). All solutions
defined in Lemma \ref{conv-x1} of this system then converge to $\tilde E$,
which justifies our definition of the stable manifold of $\tilde E$. Its
dimension is $N_x+N_y+2N_z-n_{\tilde E,\tilde s}$, where $n_{\tilde E,\tilde
  s}$ is the number of $s$-compliant species (for some $s\leq \tilde s$) that
are not present in $\tilde E$. 
\end{itemize}
Through Lemma \ref{conv-x1}, we have in fact shown that all solutions of
$\Sigma$ in the non-negative orthant converge to an equilibrium. Indeed, for a
given initial condition, either it belongs to the stable manifold of $\tilde
E_0$ or, eliminating from the system all species that are null at the initial
time necessarily sets it in a form where Lemma \ref{conv-x1} can be applied
(which shows convergence to an equilibrium).    

The dimension of the stable manifold of any equilibrium $E$ will therefore be the one of $\tilde E$ plus $1$.
The hypotheses of Theorem F.1 from \cite{SmiWal1995} are indeed all verified:
\begin{itemize}
\item
The whole system (\ref{model_norm}) is bounded (see section \ref{sec:bounds})
\item
The equilibria of system $\Sigma$ are hyperbolic (see Appendix \ref{sub:C2}-\ref{sub:C6}).
\item
There are no cycles of equilibria in system $\Sigma$. Indeed, if we analyze
the potential transition between two equilibria, both equilibria must belong
to the same face, so that convergence takes place to the one corresponding to
the smallest value of $s$. A potential sequence of equilibria would then be
characterized by a decreasing value of $s$ at each equilibrium, which prevents
it from cycling.  
\end{itemize}
We can then conclude from this theorem  that all solutions of
(\ref{model_norm}) tend to an equilibrium. We are then left with checking to
what equilibrium they tend.  

Before continuing this proof, we need to detail $n_{\tilde E,\tilde s}$. In
the case of $\tilde E=\tilde E^*$ and $\tilde s=s^*$, we have $n_{\tilde
  E,\tilde s}=0$ (by definition, all $s^*$-compliant species are present in
$E^*$ and there is no other species that is compliant for smaller values of
$s$). Otherwise, we necessarily have  $n_{\tilde E,\tilde s}>0$. Indeed, we
know that $\tilde s>s^*$, so that all species present in $E^*$ are compliant
for some $s<\tilde s$; as such, in order to have $n_{\tilde E,\tilde s}=0$,
$\tilde E$ would need to at least contain all species that are present in
$\tilde E^*$. In such a case no $S$ and $Q$ species can be present in $\tilde
E^*$ (otherwise, it could not be present in $\tilde E$ also for a different
value of $s$). Defining $J$ the set of C-species that are present in $E^*$ and
writing (\ref{MBE_eq}) for $E^*$ then yields 
\[
M = s^* + \sum_{j\,\in\,J} Y_j(s^*)= s_{in}
\]
Equality (\ref{MBE_eq}) should also be valid in $\tilde s>s^*$ so that
\[
s_{in} = \tilde s + \sum_{i=1}^{N_x} x_i + \sum_{j=1}^{N_y} y_j + \sum_{k=1}^{N_z} q_k z_k >\tilde s + \sum_{j\,\in\,J} Y_j(\tilde s)>s_{in}
\]
where we have the last inequality (which leads to a contradiction) because
$Y_j(s)$ is an increasing function. We can then conclude that, for all $\tilde
E\neq \tilde E^*$, $n_{\tilde E,\tilde s}>0$, and at least one
$s^\star$-compliant species species must be null.

In order to check to what equilibrium solutions of  (\ref{model_norm}) tend, we use an induction argument, by supposing that our
Main Theorem has been proven up to $N-1$ species, which we use for the
proof for $N$ species. Along with the fact that the stability result is
trivial for $1$ species (classical Monod model, \cite{SmiWal1995}, classical
Droop model, \cite{OyaLan94} and generalized Contois model,
\cite{GroMazRap2005}), this will conclude our proof. 

Let us consider a system of $N$ species with equilibrium $E^*$ as defined
earlier. This equilibrium contains positive species (which are
$s^\star$-compliant) and null species (which are not $s^\star$-compliant).  

Imposing, for one of the not $s^\star$-compliant species, $x_i=0$ (or $y_j=0$
or $z_k=0$) for the initial condition, sets us in the framework where we have
$N-1$ species present in the system. Also, since this species did not belong
to the positive ones in $E^\star$, its absence does not change anything into
which equilibrium is the one corresponding to the smallest subsistance
concentration, which remains $E^\star$. We can then apply the induction
hypothesis, which indicates that all such initial conditions initiate
solutions that converge to $E^\star$ (as long as the $s^\star$-compliant
species have positive initial condition).  

Studying now the equilibrium $E_0$, we know from the beginning of the proof
that its stable manifold is of dimension $N_x-n_x+N_y-n_y+ 2 N_z - n_z+1$. As
was done for $\Sigma$, it is directly apparent that any initial condition with
$x_1(0)=\hdots=x_{n_x}(0)=0$, $y_1(0)=\hdots=y_{n_y}(0)=0$ and
$z_1(0)=\hdots=z_{n_x}(0)=0$ generates a solution that has all species
exponentially go to zero. Finally, the analysis of the $\dot s$ equation shows
that it has the form $\dot s=D(s_{in}-s)-F(t)$ with $F(t)$ exponentially going
to zero so that $s$ goes to $s_{in}$ and all such solutions go to $E_0$.  

We can now consider all the other equilibria. Let an equilibrium $E$
corresponding to a substrate concentration $\tilde s$ ($>s^*$ by
definition). As we have seen in our analysis of $\Sigma$, the stable manifold
of the corresponding $\tilde E$ is of dimension $N_x+N_y+2N_z-n_{\tilde
  E,\tilde s}$, so that the stable manifold of $E$ is of dimension
$N_x+N_y+2N_z-n_{\tilde E,\tilde s}+1$. Let us set ourselfes in the situation
where all  $n_{\tilde E,\tilde s}$ species are set to zero at the initial time
and all others are positive. We can then consider the system with only the
remaining $N_x+N_y+2N_z-n_{\tilde E,\tilde s}$ positive species and the
substrate. We have seen that, in this case, all solutions of the corresponding
reduced order $\Sigma$ go to $\tilde E$ which means that $\tilde s$ is the
``$s^*$'' defined in Lemma \ref{lemma:final} for the reduced order
system. Since the reduced order system contains less than $N$ species because
$n_{\tilde E,\tilde s}>0$, we conclude that all solutions of the full system
(\ref{model_norm}) that have zero initial condition for all $n_{\tilde
  E,\tilde s}$ species and positive values for all $N_x+N_y+2N_z-n_{\tilde
  E,\tilde s}$ others converge to $\tilde E$. We have then exhibited an
invariant manifold of dimension $N_x+N_y+2N_z-n_{\tilde E,\tilde s}+1$ for
which all solutions go to $E$; this corresponds to the predicted dimension of
the stable manifoldof $E$. No solution with some of the $n_{\tilde E,\tilde
  s}$ species positive (among which there is at least on $s^*$-compliant
species) at the initial time can then converge to $E$.

This completes the proof of our Main Theorem since all solutions go to an
equilibrium and we have exhibited the stable manifold of all equilibria other
than $E^*$. These manifolds cannot go into the region where $x_i, y_j$ or
$z_k>0$ for all $s*-compliant$ species because at least one of them is in the
corresponding $n_{\tilde E,\tilde s}$-set. All initial conditions in the
region where $x_i, y_j$ or $z_k>0$ for all $s*-compliant$ species therefore
generate solutions that go to $E^*$.  
\end{proof}

\section{Discussion}
\subsection{ How $D$ and $s_{in}$ both determine competition outcome}

\label{sec:discussion}

In M- and Q-only competitions, the outcome of competition is mainly determined
by $D$, which fixes the $s^{x\star}_i$ and $s^{z\star}_k$  M- and Q-substrate
subsistence concentrations; the role of $s_{in}$ is to allow the best
competitor (already determined by the value of $D$) to settle the reactor, or
to cause it to be washed out with all the others. On the contrary in C-only
competition, both controls have important roles: $D$ fixes the $Y_j(s)$
functions, while $s_{in}$ determines the equilibrium, where $s^{y\star}+\sum_j
Y_j(s^{y\star}) = s_{in}$. With a low enough $s_{in}$, only few C-species will
settle the chemostat ($s^{y\star}$ being low in this case, there will be few
$s^{y\star}$-compliant species, with non-null $Y_j(s^{y\star})$), whereas a
high enough $s_{in}$ can enable all C-species to coexist. 

Finally, in a mixed competition the dilution rate $D$ fixes all the M- and
Q-substrate subsistence concentrations $s^{x\star}_i$ and $s_k^{z\star}$, as
well as the $Y_j(s)$ functions, while the input substrate concentration
$s_{in}$ selects the species remaining in the reactor, by limiting the
available nutrients, and thus the biomasses present in the reactor at
equilibrium. Figure \ref{fig:discussion} gives an example between three
competitors. 

\begin{figure}[htp]
	\begin{center}
	\includegraphics[width=4in]{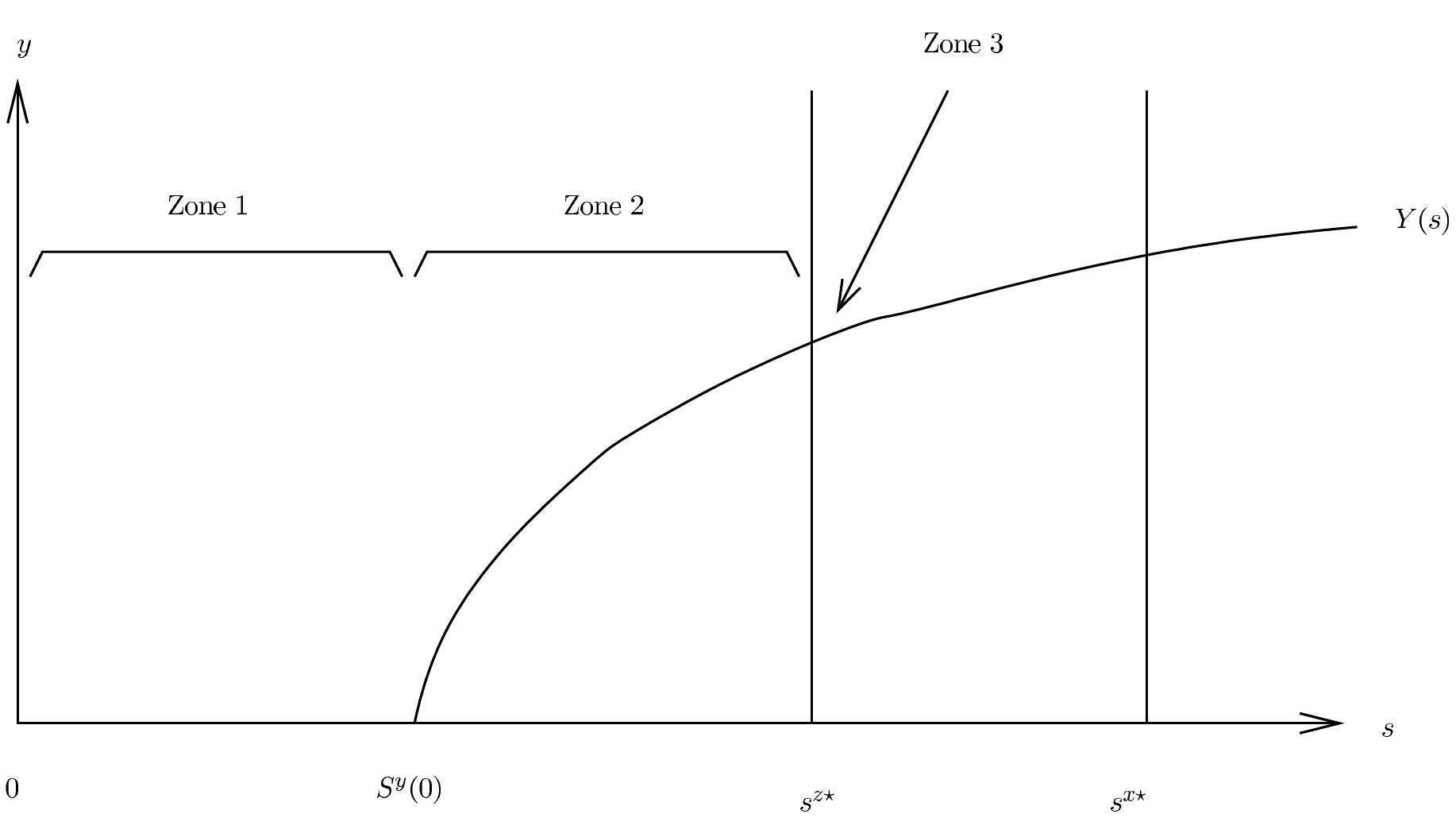}\\
	\caption{Mixed 3 class competition outcome depends both on the
          dilution rate $D$ and input substrate concentration $s_{in}$. The
          solid lines represent the influence of $D$, which fixes the
          subsistence concentrations of one M-model ($s^{x\star}$) and Q-model
          ($s^{z\star}$) species, and the equilibrium biomass $Y(s)$ of one
          C-species. As the free bacteria species has a too high subsistence
          concentration $s^{x\star}>s^{z\star}$ it will be outcompeted and
          excluded. The three numerated zones represent the influence of
          $s_{in}$. {\it Zone 1} ($s_{in} \leq S^y(0)$) : no species remain at
          equilibrium. {\it Zone 2} ($S^y(0)<s_{in} \leq
          s^{z\star}+Y(s^{z\star})$) : only the C-species remains at
          equilibrium. {\it Zone 3} ($s_{in}>s^{z\star}+Y(s^{z\star})$) : the
          attached bacteria and phytoplankton species coexist.  
	}
	\label{fig:discussion}
	\end{center}
\end{figure}

On this figure the $s^{x\star}$ and $s^{z\star}$ values and the $Y(s)$
function are fixed by $D$. Here $s^{z\star}_1$ is lower than $s^{x\star}_1$,
so that the free bacteria species will be outcompeted and washed out. Then the
value of $s_{in}$ determines wether 
\begin{enumerate}
\item
no species remain at equilibrium
\item
only the attached bacterial species remains at equilibrium, as there is not
enough input substrate to feed both attached bacteria and phytoplankton
species: because $s^{y\star} + Y(s^{y\star}) = s_{in}$ and $s_{in} <
s^{z\star} + Y(s^{z\star})$, we know that $s^{y\star}<s^{z\star}$, so that
$s^\star=s^{y\star}$, and only the C-species remains in the reactor. 
\item
both the attached bacteria and phytoplankton species remain in the chemostat:
here $s_{in}>s^{z\star} + Y(s^{z\star})$ and $s_{in} =
s^{y\star}+Y(s^{y\star})$ give $s^{y\star}>s^{z\star}$, so that
$s^\star=s^{z\star}$ and the phytoplankton species remains in the reactor,
coexisting with the $s^{z\star}$-compliant C-species. 
\end{enumerate}
In this last case $D$ has fixed the $s^{z\star}$ substrate equilibrium value
and the $Y(s)$ function, and at equilibrium the total substrate in the
chemostat, equal to $s_{in}$, will be composed of 
\begin{itemize} 
\item
the substrate in the medium $s^{z\star}$ (which is fixed by $D$ and does not depend on $s_{in}$);
\item
the attached bacterial species internal substrate $Y(s^{z\star})$ (which is also fixed by $D$ only);
\item
the phytoplankton species internal substrate $Q(s^{z\star})) z^\star= s_{in} - Y(s^{z\star}) - s^{z\star}$, which depends on $s_{in}$.
\end{itemize}
By going from left to right in Figure \ref{fig:discussion}, starting with
$s_{in}=0$, it is possible to imagine the input substrate concentration
increase, thus enabeling more and more substrate $s=s_{in}$ at equilibrium
(zone 1). Then in zone 2 the C-species is present at equilibrium, and as
$s_{in}$ increases, more and more biomass $Y(s)$ is present at
equilibrium. Finally $Y(s^{z\star})$ is the maximal biomass for which the
attached species needs less substrate at equilibrium than the phytoplankton
species to have a growth rate equal to $D$. After that it has to coexist with
the phytoplankton species: when $s_{in}$ increases higher than
$s^{z\star}+Y(s^{z\star})$ it enables more and more Q-biomass $z^\star$, while
keeping substrate concentration $s=s^{z\star}$ and C-biomass
$y=Y(s^{z\star})$. 

\subsection{Originiality of the demonstration}

The demonstration explains how the state variables evolve, and its originality
for the study of uniquely phytoplankton (or bacteria) species can be summed up
in three points. \\ 
First, we chose to study the substrate evolution instead of ignoring it after
the classical mass balance equilibrium transformation $s = s_{in} - \sum_i x_i
- \sum_j y_j -\sum_k{q_k z_k}$. \\ 
Then, the definition of the $S^y_j$ and $S^z_k$ functions enabled to gather
most information on the substrate axis:  
instead of having separate information on $1+N_x+N_y+2N_z$ axes we obtained a
one dimensional view on these dynamics (Figure \ref{fig_Qi-Si} and
\ref{fig_Yj-Sj}), where all the $S^y_j(y_j)$ and $S^z_k(q_k)$ go towards
$s$. We have thus turned a complex $1+N_x+N_y+2N_z$ dimensional problem into a
simpler one: "how do $s$ and the $S^y_j(y_j)$ and $S^z_k(q_k)$ behave on the
substrate axis, and what are the consequences for the biomasses?". \\ 
Finally the definition of the non decreasing lower bound $L(t)$ (section
\ref{sec_def-L}) and its convergence towards $s^\star$ (section
\ref{sec_s-conv-s1}) were the last steps for this demonstration to emerge. 
 
Free species pure competitions (with one class of species among Monod or
Droop) for substrate lead to the "survival of the fittest", the fittest being
the species with lowest substrate requirement $s^\star$.  
On the contrary, Contois-only competition lead to a coexistence equilibrium,
because biomass dependence gives attached bacterial species the capability to
remain at equilibrium for different substrate concentrations in the range
$[S^y_j(0),s_{in})$ (see Figure \ref{fig:s_star}). Monod and Droop species are
mutually exclusive, which leads to the pessimization principle of adaptative
dynamics \cite{Diekmann2003} : "mutation and natural selection lead to a
deterioration of the environmental condition, a Verlenderung. We end up with
the worst of all possible environment." On the contrary attached species are
coexistence-compliant thanks to biomass dependence, which nuances the
pessimization principle: "some species could live in worse environments
($s=\min_j(S^y_j(0))$ being the worse one) but if there is enough substrate
for other species, they can coexist." (see Figure \ref{fig:discussion} and
discussion) 

\section{Conclusion}
In this paper a demonstration was given for the outcome of competition between
phytoplankton and bacteria. Three scenarios are possible, depending both on
the dilution rate $D$ and input substrate concentration $s_{in}$ (see
discussion for precisions): 
\begin{itemize}
\item
only the best free competitor remains in the chemostat;
\item 
only some attached bacterial species coexist at equilibrium;
\item 
a new equilibrium (never studied before) is attained, where the best free competitor coexists with all the $s^\star$-compliant attached bacterial species.
\end{itemize}

Since the introduction of the concept of evolution, with its link to
competitive exclusion \cite{Har1960} and the "paradox of phytoplankton"
\cite{Hutchinson1961}  
modelling has tried to apprehend competition, and to predict or control
it. Our contribution in this framework was to extend the results proven in the
N-species Monod model, N-species Droop model and N-species Contois model,
where the outcome of competition was predicted and explained with mathematical
arguments, accompanied by ecological interpretations. 

An important conclusion in this type of competition is that attached bacteria
are likely to be present in a pure culture of microalgae. This may have very
important consequences on the ecological point of view, since such natural
coexistence between a phytoplanktonic species and attached bacteria may have
lead to co-evolution, where the best association between phytoplankton and
bacteria have been progressively selected.

\appendix



\section{Step 3 - Case $a$: $L$ attains $s^\star$ in finite time}
\label{app_4a}


In this case 
\begin{itemize}

\item 
if $s^\star = s^{z\star}_1$ 
we consider Figure \ref{fig_L-att-s1} where $L$ attains $s^\star$ after a finite time $t^L$:
\[
	\forall t \geq t^L, L(t) = s_1^\star
\]

\begin{figure}[htp]
\begin{center}
\includegraphics[width=4in]{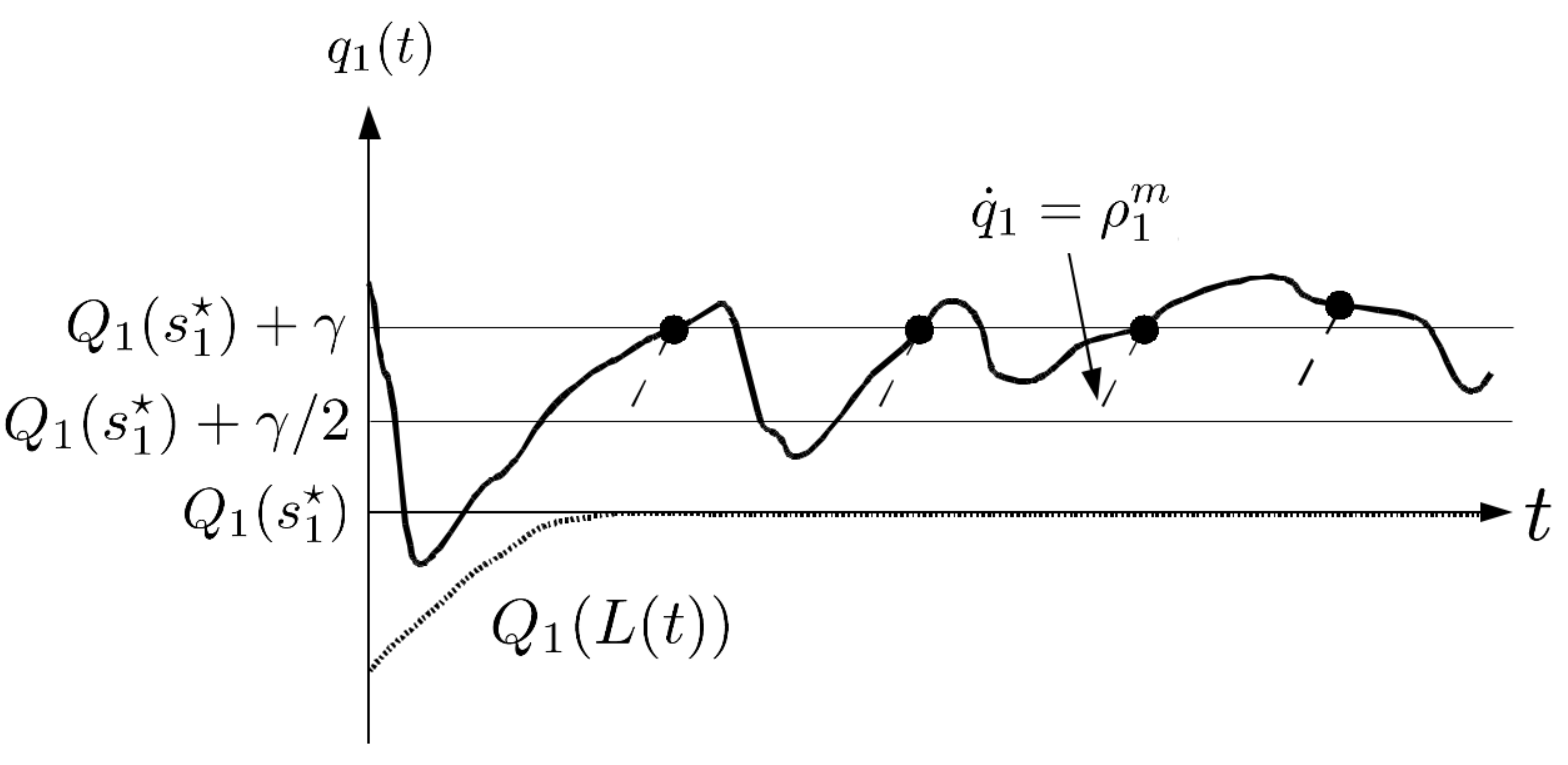}\\
\caption{Visual explanation of the demonstration of Lemma \ref{s_conv_s1} - Case 1: $L$ attains $s^\star$ in finite time $t^L$ (Q-model).
$i)$ $q_1$ is repeatedly higher than $Q_1(s^\star)+\theta$ (\emph{$\bullet$}). 
$ii)$ Because $\dot q_1$ is upper bounded by $\rho^m_1$,  so that $q_1$ is
higher than $Q_1(s^\star)+\theta/2$ during non negligible time intervals
(\emph{dashed lines} represent $\dot q_1 = \rho^m_1$). Thus $z_1$ diverges,
which is a contradiction.} 
\label{fig_L-att-s1}
\end{center}
\end{figure}


\subsubsection*{Substep 3a.1: after a finite time larger than $t^L$, $q_1$ is repeatedly higher than $Q_1(s^\star) + \theta$.}

Since $\min_i(S^z_k(q_k)) \geq L$, we know that
\[
\forall t>t^L, q_1(t)\geq Q_1(s^\star)
\]
As $s$ does not converge to $s^\star$, we also know from Lemma \ref{conv_s-qi} that $q_1$ does not converge towards $Q_1(s^\star)$:
\[
\exists \theta>0, \forall t>0, \exists t^q>t, |q_1(t^q) - Q_1(s^\star)| > \theta
\]
Those two facts imply that the repeated exits of $q_1(t)$ from the
$\theta$-interval around $Q_1(s^\star)$ take place above $Q_1(s^\star)$ 
for any $t^q > t^L$, so that, in that case, we have $q_1(t^q)>Q_1(s^\star) +
\theta$. In Figure \ref{fig_L-att-s1}, such $t^q$ time instants are
represented by \emph{$\bullet$}. 

\subsubsection*{Substep 3a.2: $q_1$ is higher than $Q_1(s^\star) + \theta/2$ during non negligible time intervals.}

Since the $q_1$-dynamics are upper bounded with
\[
\dot q_1 \leq \rho^m_1
\]
we know that every time $q_1$ is higher than $Q_1(s^\star) + \theta$, it has
been higher than $Q_1(s^\star) + \theta/2$ during a time interval of minimal
duration $A(\theta) = \frac{\theta}{2 \rho^m_1}$. On Figure
\ref{fig_L-att-s1}, $\dot q_1 = \rho^m_1$ is represented by the \emph{dashed
  lines}. 

\subsubsection*{Substep 3a.3: then $z_1$ diverges, which is impossible}

From time $t^L$ on, we have that $q_1\geq Q_1(s^\star) \Rightarrow
\gamma_1(q_1) \geq D$, so that $z_1(t)$ is non decreasing. During each of the
time interval where $q_1$ is higher than $Q_1(s^\star) + \theta/2$, the
increase of $z_1$ is lower bounded by  
\[
\dot z_1 = \gamma_1(Q_1(s^\star) + \theta/2) - D = C(\theta) > 0
\]
so that every $t^q$ time we have 
\[
z_1(t^q)-z_1(t^q-A(\theta)) > C(\theta) A(\theta)
\]
As such increases occurs repeatedly, and as $z_1$ is non decreasing, $z_1$
diverges. This is a contradiction because $z_1$ is upper bounded (see
(\ref{xibound})). \\ 


\item 
if $s^\star = s^{x\star}_1$  then the non convergence of $s$ to $s^\star$, and the fact that $s \geq s^\star$ will cause $s$ to be non negligibly "away"
 from $s^\star$, so that $x_1$ will diverge, causing a contradiction with
 (\ref{eq:s-low-Mm}). This is exactly the same demonstration as above (in the
 case $s^\star=s^{z\star}_1$) without needing the $q_k$ study. 

\item
if $s^\star = s^{y\star}$ then $s+\sum_{j=1}^{N_y} Y_j(s)$ will always be
higher than $s_{in}=s^{y\star}+\sum_{j=1}^{N_y} Y_j(s^{y\star})$ without
converging to $s_{in}$, which is in contradiction with (\ref{MBE_eq}). 

\end{itemize}

\section{Step 3 - Case $b$: $L$ never attains $s^\star$}
\label{app_4b}


In this case $L(t)$ converges towards a value $ \hat L \in (0,s^\star]$,
because it is non decreasing and bounded in $[0,s^\star]$, so that 
\[
\forall \epsilon >0, \exists t^L(\epsilon)>0, \forall t>t^L(\epsilon), \arrowvert L(t)-\hat L\arrowvert <\epsilon
\]
We consider the neighborhood of $\hat L$ in Figure \ref{fig_L-no-att-s1}.

\begin{figure}[htp]
\begin{center}
\includegraphics[width=4in]{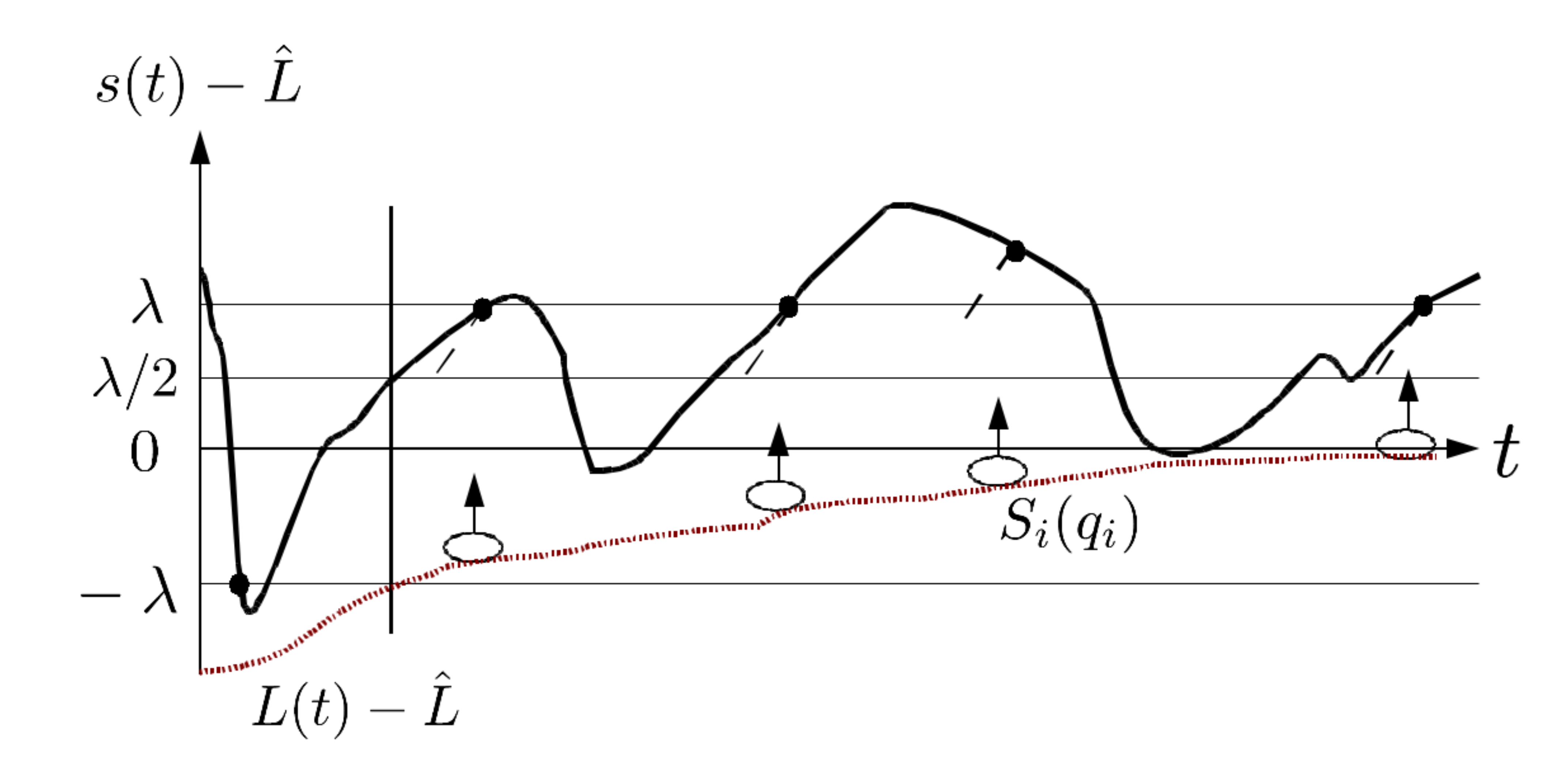}\\
\caption{Visual explanation of the demonstration of Lemma \ref{s_conv_s1} - Case 2: $L$ never attains $s^\star$.
$i)$ $s$ is repeatedly higher than $\hat L + \lambda$ (\emph{$\bullet$}).
$ii)$ $\dot s$ is upper bounded by $D s_{in}$, so that $s$ is higher than
$\hat L + \lambda/2$ during non negligible time intervals (\emph{dashed lines}
represent $\dot s = D s_{in}$) 
$iii)$ during such a time intervals $L = \min_k(S^z_k(q_k))$ (or
$\min_j(S^y_j(y_j))$) is increasing non negligibly towards $s$, so that $L$
cannot both converge towards $\hat L$ and stay lower than $\hat L$ during the
whole time interval: there is a contradiction. 
}
\label{fig_L-no-att-s1}
\end{center}
\end{figure}

\subsubsection*{Substep 3b.1: after a finite time, $s$ is repeatedly higher than $\hat L + \lambda$.}

Since, from the beginning of the proof of Lemma \ref{s_conv_s1},  we know that
 $s$ does not
converge to any constant value, hence not to $\hat L$, 
\[
\exists \lambda>0, \forall t>0, \exists t^s>t, |s(t^s) - \hat L| > \lambda
\]

Since $L$ is increasing and converges to $\hat L$, it reaches $\hat L -
\lambda$ in finite time $t^L(\lambda)$. After this finite time, $s$ is higher
than $\hat L + \lambda$ on every $t^s$ time instants, which are represented by
\emph{$\bullet$} in Figure \ref{fig_L-no-att-s1}. 

\subsubsection*{Substep 3b.2: $s$ is higher than $\hat L + \lambda/2$ during non negligible time intervals.}

Because of the boundedness of $\dot s$
\[
\dot s \leq D s_{in}
\]
every time $s$ is higher than $\hat L + \lambda$, it has been higher than
$\hat L + \lambda/2$ during a non negligible time interval of minimal duration
$A(\lambda) = \frac{\lambda}{2 D s_{in}}$. On Figure \ref{fig_L-no-att-s1} the
case $\dot s = D s_{in}$ is represented by \emph{dashed lines}. 

\subsubsection*{Substep 3b.3: $L = \min_k(S^z_k(q_k))$ (or
  $\min_j(S^y_j(y_j))$) is increasing non negligibly towards $s$, so that $L$
  cannot both converge towards $\hat L$ and stay lower than $\hat L$ during
  the whole time interval: there is a contradiction.} 

Like in previous proofs, we are interested in what
happens during the $[t^s - A(\lambda),t^s]$ time-interval, with 
$t^s -A(\lambda)>t^L(\epsilon)$ (for some $\epsilon<\lambda$).
Since, during this time-interval, $s(t) > \hat L + \lambda/2$ and
$L < \hat L$, we know that there exists a $k$ such that $L(t^s) = S^z_k(q_k(t^s)) < \hat L$, or a $j$ such that $L(t^s) = S^y_j(y_j(t^s)) < \hat L$. 

For both this step (3b.3) we choose to first only present arguments for the
case $L(t^s) = \min_k(S^z_k(q_k))$; almost similar arguments for the case
$L(t^s) = \min_j(S^y_j(y_j))$ will then be briefly presented.


\begin{itemize}
\item 
if $L(t^s) = \min_k(S^z_k(q_k))$, then 
during the whole considered time-interval, as $S^z_k(q_k)$ was increasing, we know that 
\begin{equation} \label{L-Sk}
\hat L - \epsilon < L \leq S^z_k(q_k) \leq S^z_k(q_k(t^s)) < \hat L
\end{equation}
so that $Q_k(\hat L-\epsilon) < q_k(t) < Q_k(\hat L)$. 
For the $k$ species, the dynamics of $q_k$ can then be lower bounded:
\[
\dot q_k \geq \rho_k(\hat L+\lambda/2) - f_k(Q_k(\hat L))
\]
and then
\[
\dot q_k \geq \rho_k(\hat L+\lambda/2) - \rho_k(\hat L) = G_k(\lambda)
\]
positive, so that the increase of $q_k$ during the $[t^s - A(\lambda),t^s]$ time-interval is also lower bounded:
\[
q_k(t^s) - q_k(t^s - A(\lambda)) \geq G_k(\lambda) A(\lambda) = H_k(\lambda)
\]
Since $Q_k = S^{z^{-1}}_k$ is locally Lipschitz with constant $K$ (because $f'_k > 0$), we have
\[
\begin{array}{ll}
q_k(t^s)-q_k(t^s-A(\lambda)) &= Q_k(S^z_k(q_k(t^s)))-Q_k(S^z_k(q_k(t^s-A(\lambda))))\\
&< K \left[ S^z_k(q_k(t^s)) - S^z_k(q_k(t^s-A(\lambda))) \right]
\end{array}
\]
so that the corresponding increase of $S^z_k(q_k)$ is lower bounded with
\[
\begin{array}{ll}
S^z_k(q_k(t^s)) - S^z_k(q_k(t^s - A(\lambda))) 
& \geq \frac{1}{K} H_k(\lambda)
\end{array}
\]
and then
\[
S^z_k(q_k(t^s - A(\lambda))) < \hat L - \frac{1}{K} H_k(\lambda)
\]
which implies the same higher bound for $L$: 
\[
L(t^s - A(\lambda)) < \hat L - \frac{1}{K} H_k(\lambda)
\]
By choosing $\epsilon < \frac{1}{K} H_k(\lambda)$, this inequality is contradictory with (\ref{L-Sk}) so that Case 2 is not possible

\item 
if $L(t^s) = \min_j(S^y_j(y_j))$, then 
the same arguments can be developped for the $j$ species, with a lower bound $G_j(\lambda)$ on the $y_j$ dynamics:
\[
G_j(\lambda) = \beta_j(\hat L+\lambda/2,Y_j(\hat L)) - \beta_j(\hat L,Y_j(\hat L)) > 0
\]
and then an increase of variable $y_j$ at least equal to $H_j(\lambda) = G_j(\lambda) A(\lambda)$ 
followed by a non negligible increase of $L$
\[
L(t^s - A(\lambda)) < \hat L - \frac{1}{K} H_j(\lambda)
\]
because $Y_j$ is locally Lipschitz. Finally a contradiction 
also occurs when $\epsilon < \frac{1}{K} H_j(\lambda)$:
\[
L(t^s - A(\lambda)) < \hat L - \epsilon
\] 
\end{itemize}

\section{Computation of system $\Sigma$ Jacobian Matrix and eigenvalues for all the equilibria}
\label{jacob}

Computation of the Jacobian Matrix of system $\Sigma$, with $s=s_{in}-\sum_{i=1}^{N_x} x_i-\sum_{j=1}^{N_y} y_j-\sum_{k=1}^{N_z} q_k z_k$.
\[
	\left(
	\begin{array}{cccc}
		J^{xx} & J^{xy} & J^{xz} & J^{xq} \\
		J^{yx} & J^{yy} & J^{yz} & J^{yq} \\
		J^{zx} & J^{zy} & J^{zz} & J^{zq} \\
		J^{qx} & J^{qy} & J^{qz} & J^{qq} 
	\end{array}
	\right)
\]
where
\[
\begin{array} {l}
J^{xx}_{ii} = \alpha_i(s)-D -\frac{\partial \alpha_i}{\partial s} x_i
\quad \textrm{and} \quad \forall l \neq i, J^{xx}_{il} = -\frac{\partial \alpha_i}{\partial s} x_i \\
J^{xy}_{ij} = -\frac{\partial \alpha_i}{\partial s} x_i \\
J^{xz}_{ik} = -\frac{\partial \alpha_i}{\partial s} x_i q_k \\
J^{xq}_{ik} = -\frac{\partial \alpha_i}{\partial s} x_i z_k \\
\textrm{and}\\
J^{yx}_{ji} = -\frac{\partial \beta_j}{\partial s} y_j \\
J^{yy}_{jj} = \beta_j(s,y_j)-D + \frac{\partial \beta_j}{\partial y_j}y_j
-\frac{\partial \beta_j}{\partial s} y_j \quad \textrm{and} \quad \forall l
\neq j, J^{yy}_{jl} = -\frac{\partial \beta_j}{\partial s} y_j \\ 
J^{yz}_{jk} = -\frac{\partial \beta_j}{\partial s} y_j q_k \\
J^{yq}_{jk} = -\frac{\partial \beta_j}{\partial s} y_j z_k \\
\textrm{and}\\
J^{zx}_{ki} = 0 \\
J^{zy}_{kj} = 0 \\ 
J^{zz}_{kk} = \gamma_k(q_k)-D \quad \textrm{and} \quad \forall l \neq k, J^{zz}_{kl} = 0 \\
J^{zq}_{kk} = \frac{\partial \gamma_k}{\partial q_k} z_k \quad \textrm{and} \quad \forall l \neq k, J^{zq}_{kl} = 0 \\
\textrm{and}\\
J^{qx}_{ki} = -\frac{\partial \rho_k}{\partial s} \\
J^{qy}_{kj} = -\frac{\partial \rho_k}{\partial s} \\ 
J^{qz}_{kl} = -\frac{\partial \rho_k}{\partial s} q_l \\
J^{qq}_{kk} = -\frac{\partial \rho_k}{\partial s} z_k - \frac{\partial
  f_k}{\partial q_k} \quad \textrm{and} \quad \forall l \neq k, J^{qq}_{kl} =
-\frac{\partial \rho_k}{\partial s} z_l \\ 
\end{array}
\]

Fortunately for eigenvalue computations, at equilibria the null biomasses will simplify the matrix:
\begin{itemize}
\item
when $x_i=0$, then the whole $i^{th}$ line gives eigenvalue $\alpha_i(s)-D$ (denoted "$x_i$-eigenvalue") and can be deleted, as well as the $i^{th} column$;
\item
when $y_j=0$ then the whole $N_x+j^{th}$ line gives eigenvalue
$\beta_j(s,y_j)-D$ (denoted "$y_j$-eigenvalue") and can be deleted, as well as
the $N_x+j^{th}$ corresponding column; 
\item
when $z_k=0$ then the whole $N_x+N_y+k^{th}$ line gives eigenvalue
$\gamma_k(q_k)-D$ (denoted "$z_k$-eigenvalue") and can be deleted, as well as
the $N_x+N_y+k^{th}$ column; in a second step, the whole $N_x+N_y+N_z+k^{th}$
column can also be deleted and gives eigenvalue $\frac{-\partial f_k}{\partial
  q_k}$ (denoted "$q_k$-eigenvalue"), as well as the $N_x+N_y+N_z+k^{th}$
line. 
\end{itemize}


\subsection{Complete washout equilibrium}\label{sub:C1}
With this in hand, we see that for equilibrium $\tilde E_0$ ($x_i=y_j=z_k=0$)
the Jacobian matrix is triangular, so that the eigenvalues lay on the
diagonal. They are: 
\begin{itemize}
\item
$\alpha_i(s_{in})-D$
\item 
$\beta_j(s_{in},0)-D$
\item
$\gamma_k(Q_k(s_{in}))-D$
\item
$- \frac{\partial f_k}{\partial q_k} \quad \textrm{(negatives)}$
\end{itemize}
We denote $n_x$, $n_y$, $n_z$ the number of M-, C- and Q- species verifying
the inequalities of Hypothesis \ref{hypo:Sin}, and thus having the possibility
to be at equilibrium with a positive biomass, under controls $D$ and
$s_{in}$. Each of these species has a positive corresponding eigenvalue on
this equilibrium, so that  
equilibirum $\tilde E_0$ has $n_x+n_y+n_z$ positive eigenvalues, and $N_x-n_x+N_y-n_y+ 2 N_z - n_z$ negative eigenvalues.

\subsection{M-only equilibria}\label{sub:C2}
For equilibrium $E^x_i$ we get all the previously cited x-,y-, z-and q-eigenvalues: 
\begin{itemize}
\item
$\alpha_l(s^{x\star}_i)-D$ whose signs are the same as $sign(s^{x\star}_i-s^{x\star}_l)$
\item
$\beta_j(s^{x\star}_i,0)-D$ which are positive if the $j^{th}$ species is $s^{x\star}_i$-compliant, or negative else;
\item
$\gamma_k(Q_k(s^{x\star}_i))-D$ whose signs are the same as $sign(s^{x\star}_i-s^{z\star}_k)$
\item
$\frac{-\partial f_k}{\partial q_k}$ which are all negative
\end{itemize}
and the remaining eigenvalue corresponds to the positive $x_i$-only dynamics:
\[
\dot x_i=(\alpha_i(s_{in}-x_i)-D)x_i
\]
which yields the eigenvalue $-\frac{\partial \alpha_i}{\partial s} x_i^\star$ for free bacteria species $i$.
Each free species with a substrate subsistence concentration $s^{x\star}_l$ or $s^{z\star}_k$ lower than $s^{x\star}_i$ gives a positive eigenvalue.
Among all the $E^x_i$ equilibria, only $E^x_1$ is stable if and only if
$s^\star=s_1^{x\star}<s_1^{z\star}$, and if all the C-species are not
$s^{x\star}_1$-compliant. 

\subsection{Q-only equilibria}\label{sub:C3}
For Equilibrium $E^z_k$ we get all the 
\begin{itemize}
\item 
$x$-eigenvalues whose signs are the sign of $sign(s^{z\star}_k-s^{x\star}_i)$;
\item
$y$-eigenvalues: as previously, $y$-eigenvalues are positive if the corresponding C-species is $s^{z\star}_k$-compliant and negative else;
\item
$z_l$-eigenvalues  whose signs are the sign of $sign(s^{z\star}_k-s^{z\star}_l)$;
\item
$q_l$-eigenvalues for all $l \neq k$ (negative);
\end{itemize}
and the remaining eigenvalues correspond to the positive $(z_k,q_k)$-only dynamics:
\[
\left\{\begin{array}{lll}
\dot z_k& =& (\gamma_k(q_k) - D) z_k \\
\dot q_k& =& \rho_k(s_{in}-q_kz_k) - f_k(q_k) 
\end{array}\right.
\]
and we obtain the following resulting matrix:
\[
	\left(
	\begin{array}{cccc}
		0 & \frac{\partial \gamma_k}{\partial q_k} z_k \\
		-\frac{\partial \rho_k}{\partial s} q_k & -\frac{\partial \rho_k}{\partial s} z_k-\frac{\partial f_k}{\partial q_k} 
	\end{array}
	\right)
\]
which has negative trace and positive determinant, so that its two eigenvalues are real negative.
Just like before, each free species with a substrate subsistence concentration
$s^{x\star}_i$ or $s^{z\star}_l$ lower than $s^{z\star}_k$ gives a positive
eigenvalue. 
Among all the $E^z_k$ equilibria, only $E^z_1$ is stable if and only if
$s^\star=s_1^{z\star}<s_1^{x\star}$, and if all the C-species are not
$s^{z\star}_1$-compliant. 

\subsection{C-only equilibria}\label{sub:C4}
Now let us consider the $E^y_G$ equilibria for which all $j \in G$ (where $G$
represents a subset of $\{1,\hdots,N_y\}$) C-species coexist in the chemostat
under substrate concentration $s^{y\star}_G$, while all the free species are
washed out. $s^{y\star}_G$ is defined by $s^{y\star}_G + \sum_{j \in G}
Y_j(s^{y\star}_G) = s_{in}$. Note that some of the $G$ species can have a null
biomass on these equilibria, as $Y_j(s^{y\star}_G)$ might be null for some $j
\in G$. 

This gives all the 
\begin{itemize}
\item 
$x$-eigenvalues whose sign are the same as the signs of $s^{x\star}_i-s^{y\star}_G$;
\item 
$z$-eigenvalues whose sign are the same as the signs of $s^{z\star}_k-s^{y\star}_G$;
\item
$q$-eigenvalues (negative).
\end{itemize}
All the $y_j$ species who are not included in $G$ give negative eigenvalues if
they are not $s^{y\star}_G$-compliant, and positive eigenvalues else; their
eigenvalues cannot be null because of technical hypothesis
\ref{hyp:technique}. All the $y_j$ species who are included in $G$ but have a
null biomass $Y_j(s^{y\star}_G)$ on the $E^y_G$ equilibrium give negative
eigenvalues. Now let us study the remaining matrix $J^{yy}_G$ which is
composed of all the $j \in G$ lines of $J^{yy}$, for which
$Y_j(s^{y\star}_G)>0$, and thus $\beta_j(s^{y\star}_G, Y_j(s^{y\star}_G))=D$: 
\[
\dot y_j=\left(\beta_j(s-\sum_ly_l,y_j)-D\right)y_j
\]
which yields the Jacobian matrix:
\[
J^{yy}_G = 
	\left(
	\begin{array}{ccccc}
-a_1-b_1 & \hdots & -a_1 & \hdots & -a_1\\
\vdots & \ddots & \vdots &  & \vdots \\
-a_j &  \hdots & -a_j-b_j & \hdots & -a_j \\
\vdots &  & \vdots & \ddots & \vdots \\
-a_n &  \hdots & -a_n & \hdots & -a_n-b_n \\
	\end{array}
	\right)
\]
with $a_j = \frac{\partial \beta_j}{\partial s}Y_j(s^{y\star}_G)>0$ and $b_j = -\frac{\partial \beta_j}{\partial y_j}Y_j(s^{y\star}_G)>0$.

Let us show that this matrix has only real negative eigenvalues, by using the
definition of an eigenvalue $\lambda=(A + B i)$, where $A \in \mathbb{R}$ is
the real part and $B \in \mathbb{R}$ the imaginary part: 
\begin{equation} \label{eq:def_eigenvalue}
J^{yy}_G \cdot 
\left(
\begin{array}{l}
y_1 \\ \vdots \\ y_n
\end{array}
\right)
= (A+Bi)
\left(
\begin{array}{l}
y_1 \\ \vdots \\ y_n
\end{array}
\right)
\end{equation}
We obtain $n$ equations:
\[
-b_j y_j - a_j \sum_l y_l = (A+Bi) y_j
\]
and thus
\begin{equation} \label{eq:ABi}
(A+Bi+b_j) y_j  = - a_j \sum_l y_l
\end{equation}
If we have $A+Bi+b_j = 0$ for some $j$, then $B=0$ and $A=-b_j<0$ so that we have a negative eigenvalue. \\
Else, isolating $y_j$ yields
\[
y_j=\frac{- a_j \sum_l y_l}{b_j+A+Bi}
\]
Summing over $j$, we obtain
\[
\sum_j y_j=\sum_j\left(\frac{- a_j \sum_l y_l}{b_j+A+Bi}\right)
\]
Now if $\sum_j y_j=0$, since some $y_j$ must be different of $0$,
(\ref{eq:ABi}) yields, for that $j$, that $A+Bi+b_j = 0$ so that again $B=0$
and $A=-b_j<0$. \\ 
Else, simplifying the sums of $y_l$ and $y_j$, this yields
\[
\begin{array}{lll}
1&=&\sum_j\left(\frac{- a_j}{b_j+A+Bi}\right)\\
&=&\sum_j\left(\frac{- a_j(b_j+A-Bi)}{(b_j+A)^2+B^2}\right)\\
&=&\sum_j\left(\frac{- a_j(b_j+A)}{(b_j+A)^2+B^2}\right)+i\sum_j\left(\frac{a_jB)}{(b_j+A)^2+B^2}\right)
\end{array}
\]
Since the left-hand-side is real, the imaginary part of the right-hand side
must be zero, which imposes $B=0$. For thr right-hand-side to be positive, at
least one of the $b_j+A$ must be negative, which translates into $\min_j
(b_j+A)<0$ and  
\[
A<-\min_j b_j<0
\]
We conclude from this that all eigenvalues of this matrix are real negative.

Finally, an $E^y_G$ equilibrium is stable if and only if all the C-species not
contained in $G$ are not $s^{y\star}_G$-compliant (this is equivalent to
saying that $s^{y\star}_G = s^{y\star}, \textrm{ with } s^{y\star}=
s^{y\star}_{\{1,\hdots,N_y\}}$), and if $s^\star=s^{y\star}$. 

\subsection{M-coexistive equilibria}\label{sub:C5}
In this section we consider equilibria $E^{(x,y)}_{i,G}$ where free bacteria
species $x_i$ coexists with the C-species in $G$, a subset of
$\{1,\hdots,N_y\}$, under substrate concentration $s^{x\star}_i$. 

We obtain here all the 
\begin{itemize}
\item
$x_l$-eigenvalues ($l \neq i$) whose signs are the signs of $s^{x\star}_i-s^{x\star}_l$;
\item
$z$-eigenvalues whose sign is the sign of $s^{x\star}_i-s^{z\star}_k$;
\item
$q$-eigenvalues (negative);
\end{itemize}
$y_j$-eigenvalues with $j$ not in $G$ are positive if $y_j$ is
$s^{x\star}_i$-compliant and negative else; $y_j$-eigenvalues with $j$ in $G$
but have a null biomass $Y_j(s^{x\star}_i)$ give negative eigenvalues. For the
remaining C-species, and species $x_i$, we obtain the following system: 
\[
\left\{\begin{array}{lll}
\dot x_i &=& \left(\alpha_i(s_{in}-x_i-\sum_ly_l) - D\right) x_i \\
	\dot y_j &=& \left(\beta_j(s_{in}-x_i-\sum_ly_l ,y_j) - D\right) y_j \\
\end{array}\right.
\]
and the Jacobian matrix:
\[
	\left(
	\begin{array}{cccccc}
-a_0 & -a_0 & \hdots & -a_0 & \hdots & -a_0 \\
-a_1 & -a_1-b_1 & \hdots & -a_1 & \hdots & -a_1\\
\vdots &\vdots & \ddots & \vdots &  & \vdots \\
-a_j & -a_j &  \hdots & -a_j-b_j & \hdots & -a_j \\
\vdots & \vdots &  & \vdots & \ddots & \vdots \\
-a_n & -a_n &  \hdots & -a_n & \hdots & -a_n-b_n \\
	\end{array}
	\right)
\]
with $a_0=\frac{\partial \alpha_i}{\partial s}x_i^\star>0$, $a_j =
\frac{\partial \beta_j}{\partial s}Y_j(s^{x\star}_i)>0$ and $b_j =
-\frac{\partial \beta_j}{\partial y_j}Y_j(s^{x\star}_i)>0$ (for
$j\,\in\,\{1,\cdots,n\}$).   
 This matrix has exactly the same form has the one considered on Appendix
 \ref{sub:C4}. The only difference being that the there is no ``$b_0$'' in the
 first element of the matrix. Defining a $b_0=0$, we can then conclude that
 all eigenvalues are real and negative because, following the development of
 Appendix \ref{sub:C4}, we obtain 
\[
A<-\min_j b_j=0
\]
 
Finally, only equilibrium $E^{(x,y)}_{1,\{1,\hdots,N_y\}}$ can be stable if
and only if $s^\star=s^{x\star}_1$. 

\subsection{Q-coexistive equilibria}\label{sub:C6}
In this section we consider equilibria $E^{(z,y)}_{k,G}$ where phytoplankton
species $z_k$ coexists with the attached species in $G$, a subset of
$\{1,\hdots,N_y\}$, under substrate concentration $s^{z\star}_k$. 

We obtain here all the 
\begin{itemize}
\item
$x$-eigenvalues whose signs are the signs of $s^{z\star}_k-s^{x\star}_i$;
\item
$z_l$-eigenvalues ($l \neq j$) whose sign are the signs of $s^{z\star}_k-s^{z\star}_l$;
\item
$q$-eigenvalues (negative);
\end{itemize}
$y_j$-eigenvalues with $j$ not in $G$ are positive if $y_j$ is
$s^{z\star}_k$-compliant and negative else; $y_j$-eigenvalues with $j$ in $G$
but have a null biomass $Y_j(s^{z\star}_k)$ give negative eigenvalues. For the
remaining C-species, and species $z_k$, we obtain the following model 
\[
\left\{\begin{array}{lll}
	\dot y_j &=& (\beta_j(s_{in}-\sum_ly_l-q_kz_k,y_j) - D) y_j\\ 
	\dot z_k &=& (\gamma_k(q_k) - D) z_k \\
	\dot q_k &=& \rho_k(s_{in}-\sum_ly_l-q_kz_k) - f_k(q_k) 
\end{array}\right.
\]
and, swapping the last two equations and using $f_k(q_k)=\gamma_k(q_k)q_k$, we get the Jacobian matrix:
\[
	\left(
	\begin{array}{ccccccc}
-a_1-b_1 & \hdots & -a_1 & \hdots & -a_1 & -a_1 z_k & -a_1 q_k \\
\vdots & \ddots & \vdots &  & \vdots & \vdots & \vdots\\
-a_j &  \hdots & -a_j-b_j & \hdots & -a_j & -a_j z_k & -a_j q_k \\
\vdots &  & \vdots & \ddots & \vdots & \vdots \\
-a_n &  \hdots & -a_n & \hdots & -a_n-b_n & -a_n z_k & -a_n q_k \\
-a_{n+1} & \hdots & -a_{n+1}  & \hdots & -a_{n+1} & -a_{n+1} z_k-b_{n+1}q_k-\gamma & -a_{n+1} q_k\\
0 & \hdots & 0 & \hdots & 0 &  b_{n+1} z_k&0
	\end{array}
	\right)
\]
with $a_j = \frac{\partial \beta_j}{\partial s}Y_j(s^{z\star}_k)>0$ and $b_j =
-\frac{\partial \beta_j}{\partial y_j}Y_j(s^{z\star}_k)>0$ for
$j\,\in\,\{1,\cdots,n\}$, with $a_{n+1}=\frac{\partial \rho_k}{\partial s}$
and $b_{n+1}=\frac{\partial \gamma_k}{\partial q_k}$. 
By using the definition of eigenvalue $\lambda=(A + B i)$ (see
(\ref{eq:def_eigenvalue})) we follow a similar path to that of Appendix
\ref{sub:C4}, we show that the eigenvalues are real and negative.  

Finally, only equilibrium $E^{(z,y)}_{1,\{1,\hdots,N_y\}}$ can be stable if
and only if $s^\star=s^{z\star}_1$. 

\begin{remark}
The same work can be done for the whole system (\ref{model_norm}), where the
eigenvalues are the same, plus the $-D$ eigenvalue which arises from mass
balance dynamics (\ref{eq:dot-M}). 
\end{remark}


\bibliographystyle{ieeetr}
\bibliography{biblio}

\medskip
Received September 2006; revised February 2007.

\medskip

\end{document}